\def\l@subsection{\@tocline{2}{0pt}{2.5pc}{5pc}{}}
\renewcommand\tocchapter[3]{%
  \indentlabel{\@ifnotempty{#2}{\ignorespaces#2.\quad}}#3%
}
\newcommand\@dotsep{4.5}
\def\@tocline#1#2#3#4#5#6#7{\relax
  \ifnum #1>\c@tocdepth 
  \else
    \par \addpenalty\@secpenalty\addvspace{#2}%
    \begingroup \hyphenpenalty\@M
    \@ifempty{#4}{%
      \@tempdima\csname r@tocindent\number#1\endcsname\relax
    }{%
      \@tempdima#4\relax
    }%
    \parindent\z@ \leftskip#3\relax \advance\leftskip\@tempdima\relax
    \rightskip\@pnumwidth plus1em \parfillskip-\@pnumwidth
    #5\leavevmode\hskip-\@tempdima{#6}\nobreak
    \leaders\hbox{$\m@th\mkern \@dotsep mu\hbox{.}\mkern \@dotsep mu$}\hfill
    \nobreak
    \hbox to\@pnumwidth{\@tocpagenum{#7}}\par
    \nobreak
    \endgroup
  \fi}
\renewcommand\csname r@tocindent0\endcsname{0pt}
\def\l@subsection{\@tocline{2}{0pt}{2.5pc}{5pc}{}}
\newtheorem{thm}{Theorem}[section]
\newtheorem{lemma}[thm]{Lemma}
\newtheorem{proposition}[thm]{Proposition}
\newtheorem{definition}[thm]{Definition}
\newtheorem{corollary}[thm]{Corollary}
\newtheorem{question}[thm]{Question}
\newtheorem*{maintheorem*}{Main Theorem}
\newtheorem*{theorem*}{Theorem}
\newtheorem*{corollary*}{Corollary}
\newcommand{\p}{\mathbb{P}}
\newcommand{\dom}{\mathrm{dom}}
\newcommand{\ran}{\mathrm{ran}}
\newcommand{\Succ}{\mathrm{Succ}}
\newcommand{\ISucc}{\mathrm{ISucc}}
\newcommand{\h}{\mathrm{ht}}
\newcommand{\res}{\upharpoonright}
\newcommand{\ka}{\kappa}
\begin{document}

\title[An almost Kurepa Suslin tree with strongly non-saturated square]{An almost Kurepa Suslin tree with strongly \\ non-saturated square}

\author{John Krueger and Eduardo Martinez Mendoza}

\address{John Krueger, Department of Mathematics, 
	University of North Texas,
	1155 Union Circle \#311430,
	Denton, TX 76203, USA}
\email{john.krueger@unt.edu}

\address{Eduardo Martinez Mendoza, Department of Mathematics, 
	University of North Texas,
	1155 Union Circle \#311430,
	Denton, TX 76203, USA}
\email{eduardomartinezmendoza@my.unt.edu}

\date{June 14, 2024; revised September 6, 2025}

\subjclass{03E05, 03E35, 03E40}

\keywords{$\rho$-separation, strongly non-saturated, almost Kurepa Suslin tree}

\begin{abstract}
	For uncountable downwards closed subtrees $U$ and $W$ of 
	an $\omega_1$-tree $T$, 
	we say that $U$ and $W$ are \emph{strongly almost disjoint} 
	if their intersection is a finite union of countable chains. 
	The tree $T$ is \emph{strongly non-saturated} if there exists a strongly 
	almost disjoint family of $\omega_2$-many uncountable downwards closed subtrees of $T$. 
	In this article we construct a Knaster forcing which adds a Suslin tree together 
	with a family of $\omega_2$-many strongly almost disjoint automorphisms of it 
	(and thus the square of the Suslin tree is strongly non-saturated). 
	To achieve this goal, we introduce a new idea called \emph{$\rho$-separation}, 
	which is an adaptation to the finite context of the notion of separation 
	which was recently introduced by Stejskalov\'{a} and the first author 
	for the purpose of adding automorphisms of a tree with a forcing 
	with countable conditions.
\end{abstract}

\maketitle

\tableofcontents

\section{Introduction}

The consistency of the existence of a Suslin tree was originally proven 
independently by Jech \cite{jech67} and Tennenbaum \cite{tennenbaum} 
using the technique of forcing. 
In Jech's forcing, conditions are countable initial segments of the 
generic tree with a top level, whereas Tennenbaum's forcing consists 
of finite approximations of the generic tree. 
Jech's forcing is countably closed and Tennenbaum's forcing is c.c.c. 
As a variation of his forcing for adding a Suslin tree, Jech \cite{jech72} 
defined a countably closed forcing which adds a Suslin tree 
together with $\ka$-many automorphisms of it, 
where $\ka$ is any infinite cardinal number satisfying that $\kappa^\omega = \kappa$. 
While it is not mentioned explicitly in his article, the automorphisms added by 
Jech's forcing are almost disjoint in the sense that any two of them agree on 
only countably many elements of the tree.  
As a consequence of this fact, 
if $\ka \ge \omega_2$ then 
the generic Suslin tree is an \emph{almost Kurepa Suslin tree}, which means that 
forcing with the Suslin tree turns it into a Kurepa tree. 
Namely, applying 
the automorphisms to a generic branch produces $\ka$-many distinct cofinal branches.

Another application of Jech's forcing for adding a Suslin tree 
is due to Stewart \cite{stewart}, who designed a countably closed 
forcing which adds a Kurepa tree. 
This fact suggests the question of whether there is a variation of 
Tennenbaum's c.c.c.\ forcing for adding a Suslin tree which adds a Kurepa tree. 
Jensen and Schlechta \cite{jensenschlechta} proved that this is not always possible: 
after forcing with the L\'{e}vy collapse to turn a Mahlo cardinal into $\omega_2$, 
there does not exist a c.c.c.\ forcing which adds a Kurepa tree. 
On the other hand, Jensen proved that if $\Box_{\omega_1}$ holds then there exists 
a c.c.c.\ forcing which adds a Kurepa tree. 
Later, Veli\v{c}kovi\'{c} \cite{boban} proved the same result with a simpler argument 
using the function $\rho$ of Todor\v{c}evi\'{c} \cite{todorpartition}, 
whose existence follows from $\Box_{\omega_1}$. 
Other examples of using $\rho$ to define c.c.c.\ forcings were given later by 
Todor\v{c}evi\'{c} \cite[Chapter 7]{todorbook}.

In light of these forcing constructions, 
a natural question is whether it is consistent that 
there exists a c.c.c.\ forcing which adds an almost Kurepa Suslin tree. 
Note that the Jensen-Schlechta limitation mentioned above 
also applies to this problem since an almost Kurepa Suslin tree is a c.c.c.\ forcing 
which adds a Kurepa tree. 
The main result of this article is that if $\Box_{\omega_1}$ holds then there exists 
a Knaster forcing with finite conditions which adds a Suslin tree with 
$\omega_2$-many almost disjoint automorphisms. 
In fact, the family of automorphisms satisfies a very strong form of almost disjointness 
which we introduce next.

Recall that if $U$ and $W$ are uncountable downwards closed subtrees of an 
$\omega_1$-tree $T$, then $U$ and $W$ are \emph{almost disjoint} if $U \cap W$ is countable. 
Observe that if $b$ and $c$ are distinct cofinal branches of $T$, then $b$ and $c$ 
are uncountable downwards closed subtrees of $T$ which are almost disjoint. 
In fact, $b \cap c$ is a countable chain, and assuming that 
$T$ has a root and is Hausdorff, 
$b \cap c$ is equal to the chain of elements less than or equal to the meet of $b$ and $c$. 
This example of cofinal branches suggests stronger forms of almost disjointness 
for uncountable downwards closed subtrees $U$ and $W$ of $T$. 
We could ask for $U \cap W$ to be a union of finitely many countable chains, 
or the slightly stronger property 
that $U \cap W$ is contained in the downward closure of a finite subset of $T$. 
Let us say that $U$ and $W$ are \emph{strongly almost disjoint} 
(or \emph{Dilworth almost disjoint}; see \cite{dilworth}) if $U \cap W$ 
is a finite union of countable chains. 
Note that an automorphism of $T$ is an uncountable 
downwards closed subtree of the tree product 
$T \otimes T$, 
so we can talk about strongly almost disjoint automorphisms of $T$ considered as 
subtrees of $T \otimes T$.

K\"{o}nig, Larson, Moore, and Veli\v{c}kovi\'{c} \cite{moorebounding} introduced 
the idea of a \emph{saturated Aronszajn tree}, 
which is an Aronszajn tree satisfying that 
any family of almost disjoint uncountable downwards closed subtrees of it 
has size at most $\omega_1$. 
A standard example of a non-saturated Aronszajn tree, due to Todor\v{c}evi\'{c} 
(see \cite[Section 2]{baumgartnerbase}), is the tree product $T \otimes K$, where 
$T$ is any Aronszajn tree and $K$ is a Kurepa tree. 
Namely, if $\{ b_\alpha : \alpha < \omega_2 \}$ is a family of cofinal branches of $K$, 
then letting $U_\alpha = T \otimes b_\alpha$ for each $\alpha < \omega_2$, the family 
$\{ U_\alpha : \alpha < \omega_2 \}$ is a witness that 
$T \otimes K$ is non-saturated.

With the above strengthening of almost disjointness at hand, we introduce the idea  
of an $\omega_1$-tree $T$ being \emph{strongly non-saturated} (or \emph{Dilworth non-saturated}), 
by which we mean that 
there exists a family of size at least $\omega_2$ consisting of 
strongly almost disjoint uncountable downwards closed subtrees of $T$. 
Since a Kurepa tree is strongly non-saturated, as witnessed by the family of its 
cofinal branches, a strongly non-saturated $\omega_1$-tree is a generalization of a Kurepa tree. 
It is not hard to show that 
if there exists a strongly non-saturated Aronszajn tree, then \textsf{CH} fails. 
The non-saturated Aronszajn tree from the previous paragraph is not strongly non-saturated. 
Namely, since $T$ is Aronszajn, we can 
fix a countable ordinal $\gamma$ such that $T_\gamma$ is infinite. 
Find $\alpha$ and $\beta$ such that the first ordinal $\delta$ for which 
$b_\alpha$ and $b_\beta$ are different on level $\delta$ is greater than $\gamma$. 
Let $x$ be the element of $b_\alpha$ and $b_\beta$ with height $\gamma$. 
Then $U_\alpha \cap U_\beta$ contains the infinite antichain $T_\gamma \times \{ x \}$, 
and therefore is not a finite union of countable chains. 
Since none of the known examples of non-saturated Aronszajn trees are strongly 
non-saturated, a natural question is whether it is consistent that there exists 
a strongly non-saturated Aronszajn tree.

\begin{theorem*}
	Assuming $\Box_{\omega_1}$, there exists a Knaster forcing $\p$ which adds a 
	Suslin tree together with a family of $\omega_2$-many strongly almost disjoint 
	automorphisms of it. 
	So $\p$ forces the existence of an almost Kurepa Suslin tree $T$ such that 
	$T \otimes T$ is a strongly non-saturated Aronszajn tree.
\end{theorem*}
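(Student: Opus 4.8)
The plan is to build $\p$ as a forcing with finite conditions in the spirit of Tennenbaum's c.c.c.\ forcing for a Suslin tree, carrying along finite approximations of $\omega_2$-many automorphisms together with a new piece of bookkeeping, the \emph{$\rho$-separation data}, whose role is to force the automorphisms to be strongly almost disjoint. The starting point is to fix, from $\Box_{\omega_1}$, Todor\v{c}evi\'c's function $\rho = \rho_1 \colon [\omega_1]^2 \to \omega$ and to isolate the two features that will be used: \emph{subadditivity}, $\rho(\alpha,\gamma) \le \max\{\rho(\alpha,\beta),\rho(\beta,\gamma)\}$ and $\rho(\alpha,\beta) \le \max\{\rho(\alpha,\gamma),\rho(\beta,\gamma)\}$ for $\alpha < \beta < \gamma$, and \emph{local finiteness}, $\{\alpha < \beta : \rho(\alpha,\beta) \le n\}$ is finite for each $\beta < \omega_1$ and $n < \omega$. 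To motivate $\rho$-separation: in the countable-condition setting of Stejskalov\'a and the first author, one makes two automorphisms $f_\eta,f_\zeta$ of a tree $T$ almost disjoint by \emph{separating} them at club-many limit levels $\delta$, i.e.\ arranging $f_\eta(x) \ne f_\zeta(x)$ for \emph{every} $x \in T_\delta$ --- possible because $T_\delta$ is countable --- which, since disagreement propagates upward in a tree, confines the agreement set to $T \res \delta$. With finite conditions one cannot disagree on all of an infinite level at once, so the idea is to disagree on all of $T_\delta$ \emph{except} a ``$\rho$-small'' piece, controlled so tightly by local finiteness of $\rho$ that the agreement sets end up being finite unions of chains rather than merely bounded.

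Concretely, a condition $p$ will consist of: a finite set $a_p \subseteq \omega_1$ of levels with $0 \in a_p$; a finite normal tree $t_p$ on a subset of $\omega_1 \times \omega$ with heights in $a_p$, arranged so that the generic union $T$ is a normal $\omega$-splitting $\omega_1$-tree; a finite set $F_p \subseteq \omega_2$ and, for each $\eta \in F_p$, a finite partial level-preserving tree-isomorphism $f^p_\eta$ of $t_p$ with downward-closed domain and range, fixing the root; and, for each pair $\{\eta,\zeta\} \in [F_p]^2$, a number $n^p_{\{\eta,\zeta\}} < \omega$, subject to the \emph{separation requirement} that whenever $x \in \dom f^p_\eta \cap \dom f^p_\zeta$ with $f^p_\eta(x) = f^p_\zeta(x)$, the canonical $\rho$-measurement of $x$ (read off from the position of $x$ inside its level and from $\h(x)$ via $\rho$) is $\le n^p_{\{\eta,\zeta\}}$. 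Extension is coordinatewise, and we forbid changing $n_{\{\eta,\zeta\}}$ once both $\eta$ and $\zeta$ have been introduced; thus for each pair the separation bound is locked in by the first condition mentioning both, and a density argument (using $\omega$-splitting, so that when extending an automorphism to a new node there are infinitely many admissible targets and only finitely many pairs to keep separated) shows that the generic $\p$ produces total automorphisms $f_\eta$, $\eta < \omega_2$, of $T$ with $A_{\eta\zeta} := \{x \in T : f_\eta(x) = f_\zeta(x)\}$ contained in a fixed $\rho$-small set.

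The argument then splits into three parts. \emph{$\rho$-separation yields strong almost disjointness:} $A_{\eta\zeta}$ is a downward-closed subtree of $T$ contained in a fixed ``$\rho$-small'' set of nodes; by local finiteness every level of that set is finite, and by subadditivity of $\rho$ --- which calls for a walks-style analysis --- the set is covered by finitely many chains, so $A_{\eta\zeta}$ too is a finite union of chains, each countable since $T$ is Aronszajn, which is exactly ``strongly almost disjoint''. \emph{$\p$ is Knaster:} given $\omega_1$-many conditions, apply the $\Delta$-system lemma to the finite objects $a_p$, $\dom t_p$, $F_p$, and the coded automorphisms, thin to a family of pairwise isomorphic conditions with common root, and amalgamate any two, $p$ and $q$: form $a_p \cup a_q$, freely build a common finite tree extending $t_p$ and $t_q$, extend the partial automorphisms over the new nodes, and set $n_{\{\eta,\zeta\}}$ on genuinely new pairs large enough to absorb the current (finite) agreement. \emph{$T$ is Suslin and almost Kurepa:} Suslinity is obtained by the standard elementary-submodel/closure-point argument for finite-condition Suslin forcings, adapted to carry the automorphism coordinates, and then applying the $f_\eta$ to a $T$-generic branch $b$ gives $\omega_2$-many cofinal branches $f_\eta[b]$, pairwise distinct since $f_\eta[b] = f_\zeta[b]$ would force the uncountable chain $b$ into the bounded set $A_{\eta\zeta}$.

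The concluding statement is then a formality: $T$ is Suslin hence Aronszajn, so $T \otimes T$ is an Aronszajn $\omega_1$-tree; each graph $\{(x,f_\eta(x)) : x \in T\}$ is an uncountable downward-closed subtree of $T \otimes T$; and for $\eta \ne \zeta$ the intersection of the $\eta$- and $\zeta$-graphs is order-isomorphic to $A_{\eta\zeta}$, a finite union of countable chains, so $\{\,\mathrm{graph}(f_\eta) : \eta < \omega_2\,\}$ witnesses that $T \otimes T$ is strongly non-saturated (and, by the remark in the introduction, this automatically entails that \textsf{CH} fails, so no extra bookkeeping is needed for that). I expect the main obstacle to be making a single combinatorial gadget serve both masters at once: the $\rho$-measurement and the separation requirement must be framed so precisely that they are preserved under the amalgamations needed for the Knaster property --- which forces the amalgamation to be arranged, by suitably aligning the $\Delta$-system above a common level, so that no new, unauthorized agreement between automorphisms is created on any newly inserted level, and this is exactly where subadditivity of $\rho$ is invoked --- while remaining strong enough that the $\rho$-bounded sets are genuine finite unions of chains and not merely of finite width level-by-level.
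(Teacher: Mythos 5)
You have proposed a genuinely different plan, but it has gaps that the paper's approach is specifically designed to avoid, so let me explain both.

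\textbf{Wrong $\rho$, constraining the wrong object.} You take $\rho = \rho_1 : [\omega_1]^2 \to \omega$ and use subadditivity and local finiteness to bound where on the tree two automorphisms are allowed to agree, recording a bound $n^p_{\{\eta,\zeta\}}$ for each pair of automorphism indices. The paper instead uses the other $\Box_{\omega_1}$-derived function $\rho : \omega_2^2 \to \omega_1$ from \cite[\S2]{todorpartition}, and the quantity that $\rho$ governs is the pair of \emph{automorphism indices} $\tau_0,\tau_1 \in \omega_2$, not the position of a node in the tree. Concretely, $\rho$-separation on level $\alpha$ says: if two distinct relations $f_{\tau_0}^{m_0}(a_i)=a_{j_0}$ and $f_{\tau_1}^{m_1}(a_i)=a_{j_1}$ occur, then $j_0=j_1$ and $\rho(\tau_0,\tau_1)\ge\alpha$. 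This is a constraint \emph{tying the height of the tree to the $\rho$-distance between indices}, and the only property of $\rho$ the paper needs is the thinning-out (special) property for uncountable families of finite subsets of $\omega_2$, invoked once in the Knaster proof.

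\textbf{Strong almost disjointness does not come from $\rho$ at all.} You argue that local finiteness of $\rho_1$ makes each level of the agreement set finite, and that subadditivity then (via an unspecified "walks-style analysis") makes it a finite union of chains. The second step is a real gap: a downward-closed subtree of an $\omega_1$-tree with all levels finite is generally \emph{not} a finite union of chains (the width can grow without bound along a cofinal set of levels while remaining finite on each level), and nothing in the stated properties of $\rho_1$ rules this out. The paper gets strong almost disjointness through a mechanism independent of $\rho$: clause (c) of the ordering on $\p$ forces every agreement $G(\gamma)(x)=G(\tau)(x)$ in an extension $(U,G)$ of $(T,F)$ to be reflected to some $z\ge_U x$ in the finite tree $T$ with $F(\gamma)(z)=F(\tau)(z)$. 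This immediately makes $F^{\dot G}_\gamma \cap F^{\dot G}_\tau$ a subset of the downward closure of the \emph{finite} set $\{(a,b) : F(\gamma)(a)=b=F(\tau)(a)\}$, hence a finite union of chains --- no walks analysis needed.

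\textbf{The amalgamation step is where the real obstruction lives, and your sketch skips it.} Saying one can "set $n_{\{\eta,\zeta\}}$ on genuinely new pairs large enough to absorb the current agreement" does not confront the actual obstacle: after thinning $\omega_1$-many conditions to isomorphic ones with common root $T$, the two conditions $(T^\alpha,F^\alpha)$ and $(T^\beta,F^\beta)$ being amalgamated have automorphisms with genuinely different index sets, and the new relations that appear at levels between $\alpha$ and $\beta$ can conspire with the old ones to violate whatever separation constraint you imposed; you cannot fix this retroactively by enlarging a bound, because those bounds are tied to levels already present. The paper's solution is precisely the $\rho$-separation condition: using the special property of $\rho : \omega_2^2 \to \omega_1$, it chooses the uncountable $Z$ so that for $\alpha < \beta$ in $Z$ and any $\zeta\in\dom(F^\alpha)\setminus\dom(F^\beta)$, $\tau\in\dom(F^\beta)\setminus\dom(F^\alpha)$, we have $\rho(\zeta,\tau)\ge\max(\h[T])$, which is exactly the slack needed for the amalgamated family to remain $\rho$-separated on the low levels while Claims 1--12 handle the high levels. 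This is the heart of the Knaster proof and it has no analogue in your proposal.

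\textbf{What your approach would buy if it worked.} It is closer in spirit to the Veli\v{c}kovi\'c/Todor\v{c}evi\'c $\rho_1$ constructions of c.c.c.\ Kurepa trees and would not require working with $\rho$ on $\omega_2$; but as written it does not deliver strong almost disjointness and does not engage the compatibility obstruction. The paper's design, by contrast, cleanly factors the two jobs: order condition (c) gives strong almost disjointness, and $\rho$ on $\omega_2$ gives Knaster.
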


The main technique used in this article is \emph{$\rho$-separation}, which is 
a variation of the notion of separation recently introduced by 
Stejskalov\'{a} and the first author \cite{KS} for the purpose of adding automorphisms to an 
$\omega_1$-tree by forcing with countable conditions. 
By working the function $\rho$ into the definition of separation, we are able to 
adapt many of the key tools of \cite{KS} to the finite context. 

We assume that the reader has a background in $\omega_1$-trees and forcing. 
Our notation is standard; we refer the reader to \cite[Section 1]{KS} for 
basic terminology and definitions concerning trees.

\section{Standard Finite Trees}

The goal of this article 
is to define a forcing which adds a Suslin tree with some remarkable properties 
by a Knaster forcing with finite conditions. 
While the tree does not exist in the ground model, it is helpful to specify the 
levels of the tree there. 
Specifically, every member of level $\alpha < \omega_1$ of the generic tree is some 
ordinal $\gamma$ such that $\omega \cdot \alpha \le \gamma < \omega \cdot (\alpha+1)$. 
With this in mind, we define the \emph{height} of a countable ordinal $\gamma$ 
to be the unique ordinal $\alpha$ such that 
$\omega \cdot \alpha \le \gamma < \omega \cdot (\alpha+1)$, and we denote the 
height of $\gamma$ by $\h(\gamma)$.

Our forcing poset consists of conditions with two components, where the 
first component is a finite tree and the second component is a finite indexed family 
of functions defined on the tree. 
In this section we develop some basic ideas about the finite trees which 
appear in our conditions.

\begin{definition}
	A \emph{standard finite tree} is a pair $(T,<_T)$ satisfying:
	\begin{enumerate}
	\item $T$ is a finite subset of $\{ 0 \} \cup (\omega_1 \setminus \omega)$ 
	and $0 \in T$;
	\item $<_T$ is a tree ordering on $T$, meaning a strict partial-ordering such that 
	for any $x \in T$, the set $\{ y \in T : y <_T x \}$ is linearly ordered by $<_T$;
	\item if $x <_T y$ then $\h(x) < \h(y)$;
	\item for all $x \in T$ and for all $\alpha \in \{ \h(z) : z \in T \} \cap \h(x)$, 
	there exists some $y \in T$ such that $\h(y) = \alpha$ and $y <_T x$.
	\end{enumerate}
\end{definition}

Suppose that $(T,<_T)$ is a standard finite tree. 
We oftentimes abbreviate $(T,<_T)$ by just $T$. 
Define $\h[T] = \{ \h(x) : x \in T \} \setminus \{ 0 \}$. 
For all $\alpha \in \h[T] \cup \{ 0 \}$, define $T_\alpha = \{ x \in T : \h(x) = \alpha \}$. 
Note that $T_\alpha$ is an antichain of $T$ by Definition 2.1(3). 
If $x \in T_\alpha$ and $\beta \in (\h[T] \cup \{ 0 \}) \cap \alpha$, 
we write $x \res_T \beta$ (or just $x \res \beta$ is $T$ is understood from context) 
for the unique $y \in T_\beta$ such that $y <_T x$, and if $X \subseteq T_\alpha$, 
define $X \res \beta = \{ x \res \beta : x \in X \}$. 
For such a set $X$, 
we say that $X$ has \emph{unique drop-downs to $\beta$} if the map 
$x \mapsto x \res \beta$ on $X$ is injective. 
If $x <_T y$, then we say that $y$ is a \emph{successor} of $x$ in $T$, 
and if $x <_T y$ and 
$\h(y) = \min(\h[T] \setminus (\h(x) + 1 ))$, then we say that 
$y$ is an \emph{immediate successor} of $x$ in $T$. 
The set of successors of $x$ in $T$ is denoted by $\Succ_T(x)$ and the set of 
immediate successors of $x$ in $T$ is denoted by 
$\ISucc_T(x)$. 

If $T$ and $U$ are standard finite trees, 
we say that $U$ is an \emph{extension} of $T$ (or $U$ \emph{extends} $T$) if 
$T \subseteq U$ and $<_T \ \subseteq \ <_U$. 
We claim that if $U$ extends $T$, then $U$ \emph{end-extends} $T$ in the sense that 
$<_T \ = \ <_U \cap \ (T \times T)$. 
For if not, then there are distinct $x, y \in T$ such that $x \not <_T y$ 
but $x <_U y$. 
By Definition 2.1(4), we can fix $z \in T$ such that $\h_T(z) = \h_T(x)$ and $z <_T y$. 
Then in $U$, $z$ and $x$ are both below $y$ but are incomparable since they have 
the same height, which contradicts that $<_U$ is a tree ordering.

Let $T$ be a standard finite tree. 
Observe that by Definition 2.1(1), $0$ is the unique element of $T$ with height $0$, and by 
Definition 2.1(4), 
$0 \le_T x$ for all $x \in T$. 
In other words, every standard finite tree has $0$ as a root. 
For any set $Y \subseteq T$, define the \emph{downward closure} of $Y$ to be the 
set $\{ z \in T : \exists x \in Y \ z \le_T x \}$. 
For any $x, y \in T$, let $x \land_T y$ (or just $x \land y$ if $T$ is understood from context) 
denote the $<_T$-largest element $z$ of $T$ 
such that $z \le_T x$ and $z \le_T y$. 
Note that $x \land y$ exists since $T$ has a root and is finite. 
A simple fact which is useful below 
is that if $x$ and $y$ are in $T$, $\alpha \le \h(x), \h(y)$, 
and $x \res \alpha \ne y \res \alpha$, then $x \land y = (x \res \alpha) \land (y \res \alpha)$.

\begin{definition}
	If $T$ and $U$ are standard finite trees, 
	we say that $U$ is a \emph{simple extension} of $T$ if:
	\begin{itemize}
	\item $U$ is an extension of $T$;
	\item $U \setminus T \subseteq \bigcup \{ U_\alpha : \alpha \in \h[U] \setminus \h[T] \}$;
	\item if $\alpha \in \h[U] \setminus \h[T]$ is less than $\max(\h[T])$ 
	and $\beta$ is the least element of $\h[T]$ greater than $\alpha$, then 
	$T_\beta$ has unique drop-downs to $\alpha$.
	\end{itemize}
\end{definition}

Note that in the third bullet point, $T_\beta = U_\beta$ so $U_\beta$ 
has unique drop-downs to $\alpha$.

We leave the easy proofs of the next two lemmas to the reader.

\begin{lemma}
	The relation on the set of all standard finite trees of being a simple extension 
	is transitive.
\end{lemma}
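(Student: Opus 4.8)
The plan is to fix standard finite trees $T$, $U$, $V$ such that $U$ is a simple extension of $T$ and $V$ is a simple extension of $U$, and to verify the three bullet points of the definition of a simple extension for the pair $T, V$. The first bullet point, that $V$ is an extension of $T$, is immediate from transitivity of the extension relation; moreover every extension is automatically an end-extension, as noted after Definition 2.1, and I would use this freely. Two routine facts will be used repeatedly, which I would record first. \emph{Fact 1:} a simple extension adds no new element at an already-present level --- if $\gamma \in \h[T]$ then $T_\gamma = U_\gamma$, since any member of $U \setminus T$ has height in $\h[U] \setminus \h[T]$; likewise $U_\gamma = V_\gamma$ for $\gamma \in \h[U]$, so in particular $T_\beta = U_\beta = V_\beta$ whenever $\beta \in \h[T]$. \emph{Fact 2:} drop-downs are absolute across extensions and compose within a tree --- whenever a standard finite tree $B$ extends a standard finite tree $A$ and $x \in A$, $\gamma \in \h[A] \cap \h(x)$, then $x \res_B \gamma = x \res_A \gamma$ (two elements of equal height are incomparable, so the $<_A$-predecessor of $x$ at level $\gamma$ remains its unique $<_B$-predecessor there), and if $\alpha < \gamma < \beta$ all lie in $\h[V]$ and $x \in V_\beta$ then $(x \res_V \gamma) \res_V \alpha = x \res_V \alpha$.

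The second bullet point is then quick: any $x \in V \setminus T$ lies in $V \setminus U$ or in $U \setminus T$, so $\h(x)$ belongs to $\h[V] \setminus \h[U]$ or to $\h[U] \setminus \h[T]$, in either case to $\h[V] \setminus \h[T]$, whence $x \in V_{\h(x)}$ with $\h(x) \in \h[V] \setminus \h[T]$. For the third bullet point, fix $\alpha \in \h[V] \setminus \h[T]$ with $\alpha < \max(\h[T])$ and let $\beta$ be the least element of $\h[T]$ above $\alpha$; since $\h[T] \subseteq \h[U]$ we have $\max(\h[T]) \le \max(\h[U])$, so also $\alpha < \max(\h[U])$, and $T_\beta = V_\beta$ by Fact 1. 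The goal is to show that the map $x \mapsto x \res_V \alpha$ on $T_\beta$ is injective, and I would split on whether $\alpha \in \h[U]$. If $\alpha \in \h[U]$, then $\alpha \in \h[U] \setminus \h[T]$, $\alpha < \max(\h[T])$, and $\beta$ is the least element of $\h[T]$ above $\alpha$, so the third bullet point applied to the simple extension $U$ of $T$ gives that $T_\beta$ has unique drop-downs to $\alpha$ computed in $U$; since $\alpha \in \h[U]$, Fact 2 identifies the map $x \mapsto x \res_U \alpha$ with the map $x \mapsto x \res_V \alpha$ on $T_\beta = U_\beta$, so the latter is injective.

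If instead $\alpha \notin \h[U]$, let $\gamma$ be the least element of $\h[U]$ above $\alpha$ (which exists since $\alpha < \max(\h[U])$); as $\beta \in \h[U]$ lies above $\alpha$, we have $\gamma \le \beta$. If $\gamma = \beta$, then the third bullet point applied to the simple extension $V$ of $U$ gives directly that $U_\beta = T_\beta$ has unique drop-downs to $\alpha$ in $V$, because the least element of $\h[U]$ above $\alpha$ is $\gamma = \beta$. If $\gamma < \beta$, then $\gamma \in \h[U] \setminus \h[T]$, $\gamma < \beta \le \max(\h[T])$, and $\beta$ is the least element of $\h[T]$ above $\gamma$ (any element of $\h[T]$ above $\gamma$ exceeds $\alpha$, hence is at least $\beta$); so the third bullet point for the simple extension $U$ of $T$ gives that $T_\beta$ has unique drop-downs to $\gamma$ computed in $U$, hence in $V$ by Fact 2, while the third bullet point for the simple extension $V$ of $U$ gives that $U_\gamma = V_\gamma$ has unique drop-downs to $\alpha$ in $V$. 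Composing these two injections by means of the identity $(x \res_V \gamma) \res_V \alpha = x \res_V \alpha$ of Fact 2 then shows that $x \mapsto x \res_V \alpha$ is injective on $T_\beta$, as required. This exhausts the cases.

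The main obstacle is organizational rather than conceptual: one must keep careful track of which tree each symbol $X_\alpha$ and each drop-down $x \res \alpha$ is interpreted in --- in particular, the unique-drop-downs condition appearing in the third bullet point of the definition is always relative to the larger of the two trees involved --- and one must establish Facts 1 and 2, since these are precisely what allow the instances of the definition for the extension $U$ of $T$ and the extension $V$ of $U$ to be chained together. The single genuinely substantive point is the observation, in the subcase $\alpha \notin \h[U]$ and $\gamma < \beta$, that $\gamma$ itself satisfies the hypothesis of the third bullet point for the simple extension $U$ of $T$; this is what makes the two injectivity statements composable.
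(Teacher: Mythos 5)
Your proof is correct. The paper omits this proof, declaring it easy and leaving it to the reader; your argument is the natural direct verification, with the one substantive point being exactly the case split in bullet point three on whether $\alpha \in \h[U]$ (and, when not, whether the least element $\gamma$ of $\h[U]$ above $\alpha$ equals $\beta$), together with the composability of the two unique-drop-down injections through level $\gamma$.
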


\begin{lemma}
	Suppose that $T$ and $U$ are standard finite trees and 
	$U$ is a simple extension of $T$. 
	Then for all $a$ and $b$ in $T$, $a \land_T b = a \land_U b$.
\end{lemma}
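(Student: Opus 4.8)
The plan is to fix $a,b\in T$, put $z=a\land_T b$ and $w=a\land_U b$, and show $z=w$. One inequality is immediate: since $z\le_T a$ and $z\le_T b$ and $<_T\ \subseteq\ <_U$, we get $z\le_U a$ and $z\le_U b$, hence $z\le_U w$ by the definition of the meet in $U$. For the reverse I would assume toward a contradiction that $z\ne w$; then $z<_U w$, and by Definition 2.1(3) applied in $U$, $\h(z)<\h(w)$.

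Next I split on whether $w\in T$. If $w\in T$, then since $U$ end-extends $T$ (as shown in the excerpt), from $w\le_U a$ and $w\le_U b$ I get $w\le_T a$ and $w\le_T b$, hence $w\le_T z$ and $\h(w)\le\h(z)$, contradicting $\h(z)<\h(w)$. So $w\notin T$, and then the second bullet of Definition 2.2 forces $\h(w)\in\h[U]\setminus\h[T]$.

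The crux is this last case, and it is where the third bullet of the definition of a simple extension is used. Write $\alpha=\h(w)$. From $w\le_U a$ and $w\ne a$ we get $w<_U a$, so $\alpha<\h(a)$; since $\h(a)\in\h[T]$ this gives $\alpha<\max(\h[T])$, so the third bullet applies with this $\alpha$: letting $\beta$ be the least element of $\h[T]$ greater than $\alpha$, the set $U_\beta=T_\beta$ has unique drop-downs to $\alpha$. Because $\alpha<\h(a),\h(b)$ and the latter two lie in $\h[T]$, we have $\beta\le\h(a)$ and $\beta\le\h(b)$, so there are (unique) $a',b'\in U_\beta$ with $a'\le_U a$ and $b'\le_U b$ (using Definition 2.1(4) in $U$, or $a'=a$ if $\beta=\h(a)$, similarly for $b$). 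Comparing heights and using that the $<_U$-predecessors of $a$, resp.\ of $b$, are linearly ordered, one checks $w<_U a'$ and $w<_U b'$; thus $a'$ and $b'$ both drop down to $w$ at level $\alpha$, and unique drop-downs of $U_\beta$ forces $a'=b'=:c$. Now $c\in T_\beta\subseteq T$ (the inclusion again by the second bullet of Definition 2.2, since $\beta\in\h[T]$) with $c\le_U a$ and $c\le_U b$, so end-extension gives $c\le_T a$ and $c\le_T b$, hence $c\le_T z$ and $\beta=\h(c)\le\h(z)$. But $\h(z)<\alpha<\beta$, a contradiction.

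I do not expect a genuine obstacle here: the argument is elementary tree bookkeeping. The only points needing a little care are checking that the level $\beta$ supplied by the third bullet sits at or below $\h(a)$ and $\h(b)$ so that $a'$ and $b'$ exist, and verifying that each of $a',b'$ actually has $w$ — rather than some other node of $U_\alpha$ — as its level-$\alpha$ restriction, which is exactly what lets unique drop-downs collapse $a'$ and $b'$ to a single element of $T$ below both $a$ and $b$.
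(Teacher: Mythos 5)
Your proof is correct. The paper explicitly omits the argument ("We leave the easy proofs of the next two lemmas to the reader"), so there is no written proof to compare against, but your argument is a clean and complete verification using exactly the intended tools. The one direction ($a\land_T b\le_U a\land_U b$) follows from $<_T\subseteq\ <_U$; for the other, you correctly split on whether $w=a\land_U b$ lies in $T$, dispose of the case $w\in T$ by end-extension, and in the remaining case use the second bullet of Definition~2.2 to place $\h(w)$ in $\h[U]\setminus\h[T]$, then the third bullet (unique drop-downs of $U_\beta=T_\beta$ to $\alpha=\h(w)$) to produce a common lower bound $c\in T_\beta$ of $a$ and $b$ with $\h(c)=\beta>\h(w)>\h(a\land_T b)$, a contradiction. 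The only spots that deserve explicit mention (and which you implicitly handle) are that $w\ne b$ as well as $w\ne a$ because $w\notin T$, giving $\alpha<\h(b)$, and that $a'\res_U\alpha=w=b'\res_U\alpha$ follows because $w$ sits in the linearly ordered chain of $<_U$-predecessors of each of $a'$ and $b'$ at the right height.
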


\begin{lemma}
	Suppose that $T$ and $U$ are standard finite trees, $U$ is a simple extension of $T$, 
	$\alpha \in \h[U] \setminus \h[T]$ is less than $\max(\h[T])$, 
	and $\beta = \min(\h[T] \setminus (\alpha+1))$. 
	Assume that $a_0 \in U_\alpha$, $a_0^+$ is the unique 
	element of $T_\beta$ above $a_0$, 
	$a_1 \in T$, and $a_0^+$ and $a_1$ are incomparable in $T$. 
	Then $a_0 \land_U a_1 = a_0^+ \land_T a_1$.
\end{lemma}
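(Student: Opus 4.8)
The plan is to establish the two $<_U$-comparabilities $a_0 \land_U a_1 \le_U a_0^+ \land_T a_1$ and $a_0^+ \land_T a_1 \le_U a_0 \land_U a_1$ and then conclude by antisymmetry of $<_U$. Throughout I would write $w := a_0^+ \land_T a_1$ and use only that $U$ is an extension of $T$, so that $<_T \ \subseteq \ <_U$.

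First I would pin down where $w$ sits. Since $a_0^+$ and $a_1$ are incomparable in $T$, necessarily $w <_T a_0^+$, so $\h(w) < \h(a_0^+) = \beta$ by Definition 2.1(3). Because $\alpha \notin \h[T]$ and $\beta = \min(\h[T] \setminus (\alpha + 1))$, the ordinal $\beta$ is the least element of $\h[T]$ strictly greater than $\alpha$; since $\h(w) \in \h[T]$ and $\h(w) < \beta$, this forces $\h(w) < \alpha = \h(a_0)$.

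Next I would obtain $w \le_U a_0 \land_U a_1$. From $w <_T a_0^+$ we get $w <_U a_0^+$, and by hypothesis $a_0 <_U a_0^+$, so $w$ and $a_0$ lie on the common $<_U$-chain of predecessors of $a_0^+$ and are therefore $<_U$-comparable; as $\h(w) < \h(a_0)$, Definition 2.1(3) yields $w <_U a_0$. Since also $w \le_T a_1$, hence $w \le_U a_1$, the element $w$ is a common $<_U$-lower bound of $a_0$ and $a_1$, so $w \le_U a_0 \land_U a_1$. For the reverse inequality I would note that $a_0 \land_U a_1 \le_U a_0 <_U a_0^+$ and $a_0 \land_U a_1 \le_U a_1$, whence $a_0 \land_U a_1 \le_U a_0^+ \land_U a_1$; and since $a_0^+$ and $a_1$ both lie in $T$, Lemma 2.5 gives $a_0^+ \land_U a_1 = a_0^+ \land_T a_1 = w$. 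Thus $a_0 \land_U a_1 \le_U w$, and antisymmetry of $<_U$ completes the argument.

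The only step I expect to require care is the height bookkeeping in the second paragraph: the observation that every level of $T$ below $\beta$ is strictly below the new level $\alpha$, which is precisely what allows $w$ to be slotted below $a_0$ inside $U$. Everything else is a mechanical unwinding of the definition of $\land$ together with Lemma 2.5.
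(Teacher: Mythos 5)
Your proof is correct, modulo a citation slip: where you invoke ``Lemma 2.5 gives $a_0^+ \land_U a_1 = a_0^+ \land_T a_1$,'' you mean Lemma 2.4 (the statement being proved \emph{is} Lemma 2.5). With that fixed, every step checks out.

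Your route is genuinely different from the paper's. The paper first reduces $a_0^+ \land_T a_1$ to $a_0^+ \land_U a_1$ via Lemma 2.4, then splits into the two cases $\h(a_1) < \alpha$ and $\h(a_1) > \alpha$, in each case identifying both $a_0 \land_U a_1$ and $a_0^+ \land_U a_1$ with a common expression obtained by dropping down to a level where the relevant coordinates already diverge (this leans on the earlier observation that $x \land y = (x \res \gamma) \land (y \res \gamma)$ whenever $x \res \gamma \ne y \res \gamma$, and in the second case invokes the unique-drop-downs clause of simple extension directly). You instead avoid the case split entirely: you show once and for all that $w := a_0^+ \land_T a_1$ has height $< \alpha$, slot $w$ below $a_0$ inside $U$ using that predecessors of $a_0^+$ form a chain, and then squeeze $a_0 \land_U a_1$ between $w$ and $w$ by a two-sided inequality and antisymmetry. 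Both arguments lean on Lemma 2.4 at the same pivotal point, but yours isolates the single height inequality $\h(w) < \alpha$ as the load-bearing fact and dispenses with the case analysis, which makes the structure more transparent; the paper's version, on the other hand, makes more visible exactly where the simple-extension hypothesis enters.
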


\begin{proof}
	By Lemma 2.4, $a_0^+ \land_T a_1 = a_0^+ \land_U a_1$. 
	First, assume that $\h_T(a_1) < \alpha$. 
	Since $a_0^+$ and $a_1$ are incomparable in $T$, $a_1$ is not below $a_0$ in $U$. 
	So $a_1 \ne a_0^+ \res_T \h(a_1) = a_0 \res_U \h(a_1)$, and 
	$a_0 \land_U a_1$ and 
	$a_0^+ \land_T a_1$ are both equal to 
	$(a_0^+ \res_T \h(a_1)) \land_U a_1$. 
	Secondly, assume that $\h_T(a_1) > \alpha$. 
	By the minimality of $\beta$, $\h_T(a_1) \ge \beta$. 
	Since $a_1$ is incomparable with $a_0^+$ in $T$, 
	$a_1 \res_T \beta \ne a_0^+$. 
	As $U$ is a simple extension of $T$, 
	$a_1 \res_U \alpha = (a_1 \res_T \beta) \res_U \alpha 
	\ne a_0$. 
	So $a_0 \land_U a_1$ and $a_0^+ \land_T a_1$ 
	are both equal to $a_0 \land_U (a_1 \res_U \alpha)$. 
\end{proof}

\begin{lemma}
	Suppose that $T$ is a standard finite tree and $B \subseteq \omega_1 \setminus \{ 0 \}$ 
	is a finite set 
	such that $\h[T] \subseteq B$. 
	Then there exists a standard finite tree $U$ which is a simple extension of $T$ 
	such that $\h[U] = B$.
\end{lemma}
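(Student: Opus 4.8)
The plan is to induct on $n = |B \setminus \h[T]|$, using the transitivity of simple extension (Lemma 2.3). If $n = 0$ then $\h[T] = B$ and $U = T$ works. For $n \geq 1$, pick any $\alpha \in B \setminus \h[T]$; it then suffices to produce a simple extension $U'$ of $T$ with $\h[U'] = \h[T] \cup \{\alpha\}$, because $\h[U'] \subseteq B$ and $|B \setminus \h[U']| = n-1$, so the inductive hypothesis applied to $U'$ and $B$ yields a simple extension $U$ of $U'$ with $\h[U] = B$, and $U$ is a simple extension of $T$ by Lemma 2.3. Note that $0 \in \h[T]$ forces $\alpha \geq 1$, so every ordinal of height $\alpha$ lies in $\omega_1 \setminus \omega$ and is eligible to be an element of a standard finite tree. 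I would split into two cases according to the position of $\alpha$ relative to $\max(\h[T])$.

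In the first case, $\alpha > \max(\h[T])$. I would fix some $z \in T_{\max(\h[T])}$ and a fresh ordinal $w \notin T$ of height $\alpha$, and set $U' = T \cup \{w\}$, ordered by $<_T$ together with $v <_{U'} w$ for all $v \le_T z$. The verification that $U'$ is a standard finite tree is routine: the only clause needing attention is Definition 2.1(4) for $w$, whose set of $<_{U'}$-predecessors is the $<_T$-chain $\{v \in T : v \le_T z\}$, which by Definition 2.1(4) in $T$ meets every level in $\h[T] = \h[U'] \cap \alpha$; and for old $x \in T$ we have $\h[U'] \cap \h(x) = \h[T] \cap \h(x)$ since $\h(x) \le \max(\h[T]) < \alpha$. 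The third bullet of Definition 2.2 is vacuous here because $\alpha > \max(\h[T])$, so $U'$ is a simple extension of $T$.

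In the second case, $\alpha < \max(\h[T])$. Let $\beta = \min(\h[T] \setminus (\alpha+1))$ and $\delta = \max(\h[T] \cap \alpha)$; these exist, $\delta < \alpha < \beta$, and no element of $\h[T]$ lies strictly between $\delta$ and $\beta$, so by Definition 2.1(4) the set of $<_T$-predecessors of any $x \in T_\beta$ is exactly the chain $\{ v \in T : v \le_T x \res_T \delta \}$. For each $x \in T_\beta$ I would choose a fresh ordinal $x^* \notin T$ of height $\alpha$, with distinct $x$'s giving distinct $x^*$'s, and set $U' = T \cup \{ x^* : x \in T_\beta \}$, ordered by the transitive closure of $<_T$ together with the relations $v <_{U'} x^*$ (for $v \le_T x \res_T \delta$) and $x^* <_{U'} y$ (for $y \ge_T x$), ranging over $x \in T_\beta$. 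A routine inspection of the possible two-step chains shows that this closure introduces no further relations, so $<_{U'}$ is a strict partial order; it is a tree order because each $x^*$ is spliced into the top of the chain below $x$, between $x \res_T \delta$ and $x$; and Definition 2.1(3),(4) are easily checked, using for (4) that the unique level-$\alpha$ predecessor of any $y \in U'$ with $\h(y) \geq \beta$ is $(y \res_T \beta)^*$. Finally, $U'_\beta = T_\beta$ has unique drop-downs to $\alpha$, since $x \res_{U'} \alpha = x^*$ for each $x \in T_\beta$ and $x \mapsto x^*$ is injective; hence $U'$ is a simple extension of $T$.

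The step I expect to require the most care is the verification in the second case that $<_{U'}$ is a tree ordering obeying Definition 2.1, which rests entirely on the fact that, since $\h[T]$ jumps from $\delta$ directly to $\beta$, the fresh level-$\alpha$ node $x^*$ can be inserted between $x \res_T \delta$ and $x$ without creating any new incomparability among old nodes and without violating the level-completeness clause Definition 2.1(4). The uniqueness of drop-downs demanded by Definition 2.2 then comes for free, precisely because the inserted nodes are chosen pairwise distinct.
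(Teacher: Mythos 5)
Your proposal is correct and follows essentially the same argument as the paper: reduce by induction (via Lemma 2.3) to adding a single new height $\alpha$, then split into the cases $\alpha > \max(\h[T])$ and $\alpha < \max(\h[T])$, adding one immediate successor in the first case and splicing one new node below each element of the level just above $\alpha$ in the second. The only difference is that you spell out the partial-order and tree-order verification that the paper leaves implicit (and you swap the roles of the names $\beta$ and $\delta$ relative to the paper's usage).
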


\begin{proof}
	We prove that if $T$ is a standard finite tree 
	and $\alpha \in \omega_1 \setminus \h[T]$ is non-zero, then there exists 
	a standard finite tree $T^+$ which is a simple 
	extension of $T$ such that $\h[T^+] = \h[T] \cup \{ \alpha \}$. 
	Once we prove this statement, the lemma follows easily by induction on the size of 
	$B \setminus A$ using Lemma 2.3. 
	If $\alpha > \max(\h[T])$, pick some $x \in T$ with height equal to 
	$\max(\h[T])$, and define $T^+$ by adding an immediate successor 
	of $x$ with height $\alpha$. 
	Suppose that $\alpha < \max(\h[T])$. 
	Fix successive elements $\beta < \delta$ of $\h[T] \cup \{ 0 \}$ such that 
	$\beta < \alpha < \delta$. 
	Fix some injective mapping $x \mapsto x^-$ from $T_\delta$ into 
	$\{ z \in \omega_1 : \h(z) = \alpha \}$. 
	Define the underlying set of $T^+$ to be equal to $T \cup \{ x^- : x \in T_\delta \}$. 
	Let $<_{T^+}$ consist of the relations in $<_T$, together with the new relations  
	$x^- <_{T^+} z$ whenever $x \le_T z$ and $y <_{T^+} x^-$ whenever $y \le_T x \res \beta$.
\end{proof}

\begin{definition}
	A standard finite tree $T$ is \emph{normal} if 
	for all $x \in T$ and for all $\alpha \in \h[T] \setminus (\h(x)+1)$, 
	there exists some $y \in T$ such that $\h(y) = \alpha$ and $x <_T y$.
\end{definition}

\begin{definition}
	A standard finite tree $T$ is \emph{Hausdorff} if for every limit ordinal 
	$\delta \in \h[T]$, letting $\beta = \max(\h[T] \cap \delta)$, 
	$T_\delta$ has unique drop-downs to $\beta$.
\end{definition}

Given a standard finite tree $T$ and an element $x$ below the top level of $T$, 
it is a simple matter to define an extension $U$ of $T$ such that 
$\h[U] = \h[T]$, $U \setminus T \subseteq \ISucc_U(x)$, and $\ISucc_U(x)$ is as 
large as you want. 
In particular, repeating this process and working our way up the levels of the tree, 
we can build a normal extension of $T$. 
The next two lemmas follow from this observation. 

\begin{lemma}
	Suppose that $T$ is a standard finite tree. 
	Then there exists a standard finite tree $U$ which extends $T$ such that 
	$\h[U] = \h[T]$ and $U$ is normal.
\end{lemma}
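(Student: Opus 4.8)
The plan is to make precise the informal observation recorded just before the statement: given a standard finite tree $T$, one wants to thicken each level of $T$, working bottom-up, so that every node below the top level acquires a successor on the next level. First I would isolate the single-step operation. Fix $x \in T$ with $\h(x) < \max(\h[T])$, let $\beta = \min(\h[T] \setminus (\h(x)+1))$ be the next level above $x$, and suppose $x$ has no $<_T$-successor on level $\beta$ (equivalently, $\ISucc_T(x) = \emptyset$). I would define $U = T \cup \{ y \}$, where $y$ is some element of $\omega_1 \setminus \omega$ of height $\beta$ not already appearing in $T$ (such a $y$ exists since $T_\beta$ is finite and level $\beta$ is infinite), and set $<_U \; = \; <_T \cup \{ (z,y) : z \le_T x \} \cup \{ (y,w) : \text{never} \}$ — that is, $y$ is placed as an immediate successor of $x$ with no successors of its own. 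One checks routinely that $U$ is again a standard finite tree: properties 2.1(1)–(3) are immediate, and 2.1(4) holds because the set of elements below $y$ in $U$ is exactly the chain $\{ z : z \le_T x \}$ together with $x$ itself, which already realizes every height in $\h[T] \cap \beta = \h[U] \cap \h(y)$. Note $\h[U] = \h[T]$ since $\beta \in \h[T]$ already. One also sees $U$ extends $T$ in the sense of the excerpt.

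Next I would iterate. Consider the finitely many ``defects'' of $T$, namely pairs $(x,\alpha)$ with $x \in T$, $\alpha \in \h[T] \setminus (\h(x)+1)$, and no $<_T$-successor of $x$ on level $\alpha$. I would remove them by induction on $\alpha$ ranging over $\h[T]$ in increasing order, and within a fixed $\alpha$, by induction on all $x$ of height $<\alpha$ for which $x$ lacks a successor on the immediately preceding level of $\h[T]$ below $\alpha$. Concretely: let $\alpha_1 < \alpha_2 < \cdots < \alpha_k$ enumerate $\h[T]$; having arranged that every node of height $< \alpha_i$ has a successor on level $\alpha_i$, pass to level $\alpha_{i+1}$ and, for each node $x$ of height $\alpha_i$ with $\ISucc(x)$ still empty, apply the single-step operation above to attach one new immediate successor of $x$ at height $\alpha_{i+1}$. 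After finitely many steps every node of height $\alpha_i$ has a successor at height $\alpha_{i+1}$; by composing upward, every node of height $\alpha_i$ then has a successor at every higher level in $\h[T]$, because a successor at level $\alpha_{i+1}$ itself has a successor at level $\alpha_{i+2}$, and so on. Since at each single step $\h[U] = \h[T]$ is preserved and no existing relations are destroyed, the final tree $U$ satisfies $\h[U] = \h[T]$, extends $T$, and is normal by Definition 2.7.

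The only mild subtlety — and the closest thing to an obstacle — is bookkeeping: one must make sure the new nodes introduced at a given stage do not collide with ordinals already used (handled by choosing fresh ordinals of the correct height from the infinite set $\{ z \in \omega_1 \setminus \omega : \h(z) = \alpha_{i+1} \}$, of which only finitely many are in the tree so far), and one must verify that the chain of successors built upward for a fixed base node $x$ really does hit \emph{every} level of $\h[T]$ above $\h(x)$, not merely the levels at which new nodes were added. Both points are immediate once the induction is organized level-by-level from the bottom, since after processing level $\alpha_{i+1}$ the inductive hypothesis "every node of height $\le \alpha_i$ has a successor at height $\alpha_{i+1}$" is restored, and normality at the top is reached after the last level is processed. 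I would state the single-step lemma explicitly (it is genuinely trivial), then deduce Lemma 2.9 by the finite induction just described, and finally remark that the same construction, run with the freedom to add as many immediate successors of each node as desired, is exactly what is needed for the companion Lemma 2.10.
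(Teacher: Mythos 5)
Your proposal is correct, and it takes essentially the same approach as the paper (which in fact gives no formal proof of Lemma 2.9, only the informal observation in the preceding paragraph that one can iteratively enlarge $\ISucc_U(x)$ for nodes $x$ below the top level while preserving $\h[U] = \h[T]$). You have simply made that observation precise with the obvious single-step extension and a level-by-level induction, and the verification of the four clauses of Definition 2.1 for the single step is complete (for 2.1(4) one should note, as you implicitly do, that for the old nodes $w \in T$ the predecessor set in $U$ is unchanged, and for the new node $y$ the chain $\{z \in T : z \le_T x\}$ realizes every height in $\h[U] \cap \h(y) = \h[T] \cap \beta$ by 2.1(4) applied to $x$ in $T$). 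The final step — that once every node strictly below the top level has an immediate successor, composing immediate successors upward gives a successor at every level of $\h[T]$ above it — is exactly what Definition 2.7 requires.
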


\begin{lemma}
	Suppose that $T$ is a standard finite tree, $\alpha \in \h[T] \cap \max(\h[T])$, 
	$X \subseteq T_\alpha$, 
	and $n$ is a positive natural number such that 
	each element of $X$ has at most $n$-many immediate successors in $T$. 
	Then there exists a standard finite tree $U$ which extends $T$ such that:
	\begin{itemize}
	\item $\h[T] = \h[U]$;
	\item $U \setminus T \subseteq \bigcup \{ \ISucc_U(x) : x \in X \}$;
	\item every element of $X$ has exactly $n$-many immediate successors.
	\end{itemize}
\end{lemma}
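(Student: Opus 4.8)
The plan is to add, for each $x \in X$, exactly enough new nodes directly above $x$ so that $x$ ends up with precisely $n$ immediate successors, while introducing no new heights.

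First I would fix $\beta = \min(\h[T] \setminus (\alpha+1))$, which exists and satisfies $\alpha < \beta \le \max(\h[T])$ because $\alpha < \max(\h[T])$; note $\beta \ge 1$, so every ordinal of height $\beta$ lies in $\omega_1 \setminus \omega$, and in any extension of $T$ with the same heights the immediate successors of a node of height $\alpha$ are exactly the nodes above it of height $\beta$. Since each $x \in X$ lies in $T_\alpha$, since $|\ISucc_T(x)| \le n$ by hypothesis, and since $T$ is finite while the set of ordinals of height $\beta$ is infinite, I can choose for each $x \in X$ a set $S_x$ consisting of $n - |\ISucc_T(x)|$ ordinals of height $\beta$, with the sets $S_x$ ($x \in X$) pairwise disjoint and disjoint from $T$. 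Put $U = T \cup \bigcup_{x \in X} S_x$, and let $<_U$ be $<_T$ together with the relations $y <_U w$ for every $x \in X$, every $w \in S_x$, and every $y \in T$ with $y \le_T x$ (and no other new relations).

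The bulk of the argument is checking that $(U,<_U)$ is a standard finite tree. Clause (1) of Definition 2.1 is immediate from the choice of the sets $S_x$. For clause (2): the set of $<_U$-predecessors of a new point $w \in S_x$ is exactly $\{ y \in T : y \le_T x \}$, a $<_T$-chain; the predecessor set of an old point is unchanged; and two distinct new points, both of height $\beta$, are $<_U$-incomparable; hence $<_U$ is a tree order. Clause (3) holds since every new relation $y <_U w$ has $\h(y) \le \h(x) = \alpha < \beta = \h(w)$. The one delicate point is clause (4), which only needs re-checking for a new point $w \in S_x$: since all new points have height $\beta \in \h[T]$ we have $\h[U] = \h[T]$, and since no height of $T$ lies strictly between $\alpha$ and $\beta$ we get $\h[U] \cap \h(w) = \h[T] \cap (\alpha+1)$; for $\gamma$ in this set, if $\gamma = \alpha$ then $x <_U w$ is the required witness, and if $\gamma < \alpha$ then $x \res_T \gamma <_U w$ is (using $x \res_T \gamma \le_T x$). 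Thus $U$ is a standard finite tree, it clearly extends $T$, and $\h[U] = \h[T]$.

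It remains to read off the three conclusions. Since heights are unchanged, for $x \in X$ we have $\ISucc_U(x) = \ISucc_T(x) \cup S_x$, a disjoint union, so $|\ISucc_U(x)| = |\ISucc_T(x)| + (n - |\ISucc_T(x)|) = n$; and because $U \setminus T = \bigcup_{x \in X} S_x$ with each $S_x \subseteq \ISucc_U(x)$, we get $U \setminus T \subseteq \bigcup \{ \ISucc_U(x) : x \in X \}$. The only step that is not pure bookkeeping is the verification of Definition 2.1(4) for the freshly added top nodes, which is exactly where it matters that we create no new heights and that $\beta$ is the immediate successor of $\alpha$ in $\h[T]$; alternatively, one can obtain the lemma by iterating the single-node observation stated just before it over the elements of $X$, using that enlarging the set of immediate successors of one element of $X$ leaves that of every other element of $X$ untouched.
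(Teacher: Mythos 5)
Your proof is correct and matches the paper's approach: the paper merely asserts that the lemma "follows from" the single-node observation by iterating it over the elements of $X$, and you carry out essentially that construction (adding fresh height-$\beta$ nodes above each $x \in X$), doing it in one pass rather than by iteration but with the details -- particularly the verification of Definition 2.1(4) for the new nodes -- filled in carefully, and you even note the iterative alternative at the end.
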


\section{Standard Functions}

The second component of a condition in our forcing is a finite indexed family of 
partial functions defined on its standard finite tree. 
In this section we introduce and analyze some basic ideas concerning such functions.

\begin{definition}
	Let $T$ be a standard finite tree and 
	let $f$ be a partial function from $T$ to $T$. 
	\begin{itemize}
	\item $f$ is \emph{strictly increasing} if for all $x, y \in \dom(f)$, 
	if $x <_T y$ then $f(x) <_T f(y)$;
	\item $f$ is an \emph{embedding} if $f$ is strictly increasing and injective;
	\item $f$ is \emph{level preserving} if for all $x \in \dom(f)$, 
	$\h(x) = \h(f(x))$;
	\item if $f$ is level preserving and strictly increasing, then 
	$f$ is \emph{downwards closed in $T$} if whenever $x \in \dom(f)$ and 
	$\beta \in \h[T] \cap \h(x)$, then $x \res \beta \in \dom(f)$.
	\end{itemize}
\end{definition}

\begin{definition}
	Let $T$ be a standard finite tree. 
	A partial function $f$ from $T$ to $T$ is called a \emph{standard function} on $T$ 
	if it is an embedding, level preserving, downwards closed, and 
	satisfies that for all $x \in \dom(f) \setminus \{ 0 \}$, $f(x) \ne x$. 
\end{definition}

\begin{lemma}
	Let $T$ be a standard tree and let $f$ be a standard function on $T$. 
	Then $f^{-1}$ is strictly increasing. 
	Moreover, if $c$ and $d$ are in the domain of $f$ and are incomparable in $T$, then 
	$f(c)$ and $f(d)$ are also incomparable in $T$.
\end{lemma}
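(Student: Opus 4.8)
The plan is to derive the ``moreover'' clause quickly from the first statement, so the bulk of the work is to show that $f^{-1}$ is strictly increasing. First I would take $u, v \in \ran(f)$ with $u <_T v$, write $u = f(x)$ and $v = f(y)$ for suitable $x, y \in \dom(f)$, and aim to prove $x <_T y$. Since $f$ is level preserving and $<_T$ strictly increases heights (Definition 2.1(3)), we get $\h(x) = \h(u) < \h(v) = \h(y)$; in particular $\h(x) \in \h[T] \cap \h(y)$, so the drop-down $y \res \h(x)$ is defined, and because $f$ is downwards closed, $y \res \h(x) \in \dom(f)$.

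Next I would compare $f(y \res \h(x))$ with $u$. Since $y \res \h(x) <_T y$ and $f$ is strictly increasing, $f(y \res \h(x)) <_T f(y) = v$; and since $f$ is level preserving, $\h\big(f(y \res \h(x))\big) = \h\big(y \res \h(x)\big) = \h(x) = \h(u)$. Thus $u$ and $f(y \res \h(x))$ are both elements of the chain $\{ z \in T : z \le_T v \}$ lying at the same height. By Definition 2.1(3), $\h$ is injective on any $<_T$-chain, so $u = f(y \res \h(x))$. Injectivity of $f$ then gives $x = y \res \h(x) <_T y$, which is exactly the conclusion $f^{-1}(u) <_T f^{-1}(v)$.

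Finally, for the ``moreover'' clause, suppose $c, d \in \dom(f)$ are incomparable in $T$ but $f(c)$ and $f(d)$ are comparable. They cannot be equal, since $f$ is injective and $c \ne d$, so one lies strictly $<_T$-below the other; say $f(c) <_T f(d)$ (the other case is symmetric). Applying the strictly increasing map $f^{-1}$ just established yields $c <_T d$, contradicting the incomparability of $c$ and $d$.

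I do not expect a genuine obstacle here. The only points requiring minor care are that the drop-down $y \res \h(x)$ genuinely belongs to $\dom(f)$ — which is precisely what ``downwards closed'' guarantees — and that a $<_T$-chain meets each level in at most one point, which is immediate from Definition 2.1(3).
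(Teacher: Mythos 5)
Your proof is correct and takes essentially the same approach as the paper's: both reduce the first claim to showing $f(y \res \h(x)) = f(x)$ by combining downwards closedness of $\dom(f)$, strict increase, level preservation, the fact that a $<_T$-chain meets each level at most once, and then injectivity. The only cosmetic difference is that you argue directly while the paper frames the same computation as a proof by contradiction, and your treatment of the ``moreover'' clause handles the equal-height and unequal-height cases uniformly rather than separately.
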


\begin{proof}
	Let $c, d \in \dom(f)$ and assume that $f(c) <_T f(d)$. 
	We claim that $c <_T d$. 
	Since $f$ is level preserving, $\h(c) = \h(f(c))$ and $\h(d) = \h(f(d))$. 
	So $\h(c) < \h(d)$. 
	Suppose for a contradiction that $c \not <_T d$. 
	Then $d \res \h(c) \ne c$. 
	Since $f$ is strictly increasing, $f(d \res \h(c)) <_T f(d)$. 
	So $f(c)$ and $f(d \res \h(c))$ are both equal to $f(d) \res \h(c)$, which 
	contradicts that $f$ is injective.
	
	Now assume that $c, d \in \dom(f)$ are incomparable in $T$. 
	If $c$ and $d$ have the same height, then they are different. 
	Since $f$ is injective and level preserving, 
	$f(c)$ and $f(d)$ are distinct and have the same height, and hence 
	are incomparable in $T$. 
	Assume without loss of generality that $\h(c) < \h(d)$. 
	Since $f$ is level preserving, $\h(f(c)) < \h(f(d))$. 
	So if $f(c)$ and $f(d)$ are comparable in $T$, then $f(c) <_T f(d)$. 
	Since $f^{-1}$ is strictly increasing, $c <_T d$, which is a contradiction.
\end{proof}

\begin{lemma}
	Suppose that $T$ and $U$ are standard finite trees, $U$ is an extension of $T$, 
	$\h[U] \cap (\max(h[T])+1) = \h[T]$, and $f$ is a standard function on $T$. 
	Then $f$ is a standard function on $U$.
\end{lemma}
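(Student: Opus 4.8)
The plan is to verify each clause in the definition of a standard function directly, leveraging the end-extension observation established in Section~2 and the hypothesis on heights. First note that since $\dom(f) \subseteq T \subseteq U$ and $\ran(f) \subseteq T \subseteq U$, the map $f$ is still a partial function from $U$ to $U$; moreover injectivity, being level preserving, and the requirement that $f(x) \ne x$ for all $x \in \dom(f) \setminus \{ 0 \}$ depend only on $f$ as a set of pairs together with the height function $\h$, and so they transfer verbatim from $T$ to $U$. It therefore remains to check that $f$ is strictly increasing in $U$ and downwards closed in $U$.

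For strict increasingness, suppose $x, y \in \dom(f)$ and $x <_U y$. Since $\dom(f) \subseteq T$, both $x$ and $y$ lie in $T$, and by the end-extension property (that $<_T \ = \ <_U \cap \ (T \times T)$ whenever $U$ extends $T$) we get $x <_T y$. As $f$ is strictly increasing on $T$, $f(x) <_T f(y)$, and since $<_T \ \subseteq \ <_U$ this gives $f(x) <_U f(y)$, as required.

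For downward closure, suppose $x \in \dom(f)$ and $\beta \in \h[U] \cap \h(x)$; I must show $x \res_U \beta \in \dom(f)$. Since $x \in T$, its height $\h(x)$ belongs to $\h[T]$, so $\beta < \h(x) \le \max(\h[T])$, whence $\beta \in \h[U] \cap (\max(\h[T])+1) = \h[T]$ by hypothesis. Thus $\beta \in \h[T] \cap \h(x)$, and downward closure of $f$ on $T$ yields $x \res_T \beta \in \dom(f)$. Finally, $x \res_T \beta$ is an element of $T \subseteq U$ of height $\beta$ lying $<_T$-below $x$, hence $<_U$-below $x$; since the $<_U$-predecessors of $x$ are linearly ordered by $<_U$, there is a unique element of $U$ of height $\beta$ below $x$, so $x \res_U \beta = x \res_T \beta \in \dom(f)$. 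This completes the verification.

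I expect no genuine obstacle here; the argument is routine. The only point requiring care is recognizing that the height hypothesis $\h[U] \cap (\max(\h[T])+1) = \h[T]$ is exactly what is needed to preserve the downward-closure clause — a new level inserted below some $\h(x)$ with $x \in \dom(f)$ could produce a drop-down of $x$ outside $\dom(f)$ — while the end-extension property of Section~2 handles strict increasingness, and the remaining clauses are immediate.
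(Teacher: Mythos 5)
Your proof is correct and follows exactly the same approach the paper uses: the routine clauses transfer verbatim, strict increasingness is handled by the end-extension observation from Section~2, and downward closure is precisely what the height hypothesis guards. The paper's proof simply leaves these verifications to the reader, which you have carried out in full.
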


\begin{proof}
	Clearly, $f$ is injective, strictly increasing, level preserving, and has no fixed 
	points other than $0$ (considered as a partial function from $U$ to $U$). 
	The assumption about the height function easily implies that $f$ 
	is downwards closed in $U$.
\end{proof}

For a standard finite tree $T$, 
define $T \otimes T$ to be the set of all pairs 
$(a,b)$ such that for some $\alpha \in \h[T] \cup \{ 0 \}$, 
$a$ and $b$ are in $T_\alpha$. 
A set $Z \subseteq T \otimes T$ is \emph{downwards closed} if for all 
$(a,b) \in Z$, if $(c,d) \in T \otimes T$, $c \le_T a$, and $d \le_T b$, 
then $(c,d) \in Z$. 
Note that a partial function $f$ from $T$ to $T$ is level preserving iff 
$f \subseteq T \otimes T$.
For any partial level preserving function $f$ from $T$ to $T$, 
the \emph{downward closure} of $f$ is the set of all $(a,b) \in T \otimes T$ 
such that for some $x \in \dom(f)$, $a \le_T x$ and $b \le_T f(x)$.

\begin{lemma}
	Let $T$ be a standard finite tree and let $f \subseteq T \otimes T$ be downwards closed. 
	Then $f$ is a strictly increasing function iff 
	for all pairs $(a_0,b_0)$ and $(a_1,b_1)$ in $f$, 
	$\h(a_0 \land a_1) \le \h(b_0 \land b_1)$.
\end{lemma}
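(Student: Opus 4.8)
The plan is to prove both implications by combining downward closure with the basic fact about meets in a standard finite tree. The single observation that does all the work is: if $(a,b) \in f$ and $c \le_T a$, then $(c, b \res \h(c)) \in f$ (with the convention that $b \res \h(b) = b$). Indeed, since $f \subseteq T \otimes T$, the set $f$ is level preserving, so $\h(b) = \h(a) \ge \h(c)$ and hence $b \res \h(c)$ is defined, satisfies $b \res \h(c) \le_T b$ and $(c, b\res\h(c)) \in T \otimes T$; downward closure of $f$ then puts $(c, b\res\h(c))$ into $f$.

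For the forward direction, I would assume that $f$ is a strictly increasing function, fix pairs $(a_0,b_0),(a_1,b_1)\in f$, and set $c = a_0 \land a_1$. Since $c \le_T a_0$ and $c \le_T a_1$, the observation gives $(c, b_0\res\h(c)) \in f$ and $(c, b_1\res\h(c)) \in f$, so $b_0 \res \h(c) = b_1 \res \h(c)$ because $f$ is a function. Writing $e$ for this common node, we have $e \le_T b_0$ and $e \le_T b_1$, so $e \le_T b_0 \land b_1$ and therefore $\h(b_0 \land b_1) \ge \h(e) = \h(c) = \h(a_0 \land a_1)$, which is exactly the desired inequality. (In fact only the fact that $f$ is a function is used here, not strict increasing.)

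For the backward direction, I would assume the inequality $\h(a_0 \land a_1) \le \h(b_0 \land b_1)$ for all pairs in $f$ and first deduce that $f$ is a function: if $(a,b_0),(a,b_1)\in f$, then applying the hypothesis with $a_0 = a_1 = a$ gives $\h(b_0 \land b_1) \ge \h(a) = \h(b_0) = \h(b_1)$ (level preservation), while $b_0 \land b_1 \le_T b_0$ forces $\h(b_0 \land b_1) \le \h(b_0)$; hence $b_0 \land b_1 = b_0 = b_1$. It then remains to see that $f$ is strictly increasing: given $x <_T y$ in $\dom(f)$, Definition 2.1(3) gives $\h(x) < \h(y)$, so $f(y) \res \h(x)$ is defined, and the observation applied to $(y,f(y))$ with $x \le_T y$ yields $(x, f(y)\res\h(x)) \in f$; since $f$ is a function, $f(x) = f(y)\res\h(x) <_T f(y)$.

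I do not expect a real obstacle here: the argument is a direct unwinding of the definitions, and the only points needing mild care are the degenerate case $\h(c) = \h(a_0)$ (where the drop-down $b_0 \res \h(c)$ is read as $b_0$ itself) and the repeated use of $f \subseteq T \otimes T$ to guarantee level preservation, which is precisely what makes meets and drop-downs interact cleanly.
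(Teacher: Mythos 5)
Your proof is correct. For the forward implication the two arguments are essentially the same: your node $e = b_0 \res \h(c)$ is precisely $f(a_0 \land a_1)$ in the paper's notation, and the chain $e \le_T b_0 \land b_1$ is exactly what the paper derives from the definition of meet. For the backward implication you proceed directly (assume the height inequality, deduce first that $f$ is a function and then that it is strictly increasing), whereas the paper argues by contrapositive, splitting into the ``not a function'' case and the ``function but not strictly increasing'' case; the two failure modes the paper inspects correspond to the two facts you establish, so the mathematical content is the same, but your direct route, built around the single observation that downward closure gives $(c, b \res \h(c)) \in f$ whenever $(a,b) \in f$ and $c \le_T a$, is a slightly cleaner organization. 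Your parenthetical remark that the forward inequality uses only that $f$ is a function (together with downward closure and level preservation) is also correct and is a nice observation the paper does not make explicit: it shows that for downwards closed subsets of $T \otimes T$, being a function already forces strict monotonicity.
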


\begin{proof}
	For the reverse implication, we prove the contrapositive: 
	if $f$ is not a strictly increasing function, then 
	there exist $(a_0,b_0)$ and $(a_1,b_1)$ in $f$ 
	such that $\h(b_0 \land b_1) < \h(a_0 \land a_1)$. 
	If $f$ is not a strictly increasing function, then it is either not a function 
	or it is a function but it is not strictly increasing. 
	Suppose that $f$ is not a function. 
	Then there exist $a$, $b_0$, and $b_1$ in $T$ such that $b_0 \ne b_1$ 
	and $(a,b_0)$ and $(a,b_1)$ are both in $f$. 
	Since $b_0$ and $b_1$ have the same height as $a$, and hence the 
	same height as each other, they are incomparable in $T$. 
	So $\h(b_0 \land b_1) < \h(b_0) = \h(a) = \h(a \land a)$, and the pairs 
	$(a,b_0)$ and $(a,b_1)$ are as desired. 
	Now suppose that $f$ is a function but it is not strictly increasing. 
	Then there are $c <_T d$ such that $f(c) \not <_T f(d)$. 
	Note that $\h(f(c)) = \h(c) < \h(d) = \h(f(d))$, and therefore 
	$f(c)$ and $f(d)$ are incomparable in $T$.  
	So $c \land d = c$ and $f(c) \land f(d) <_T f(c)$. 
	Hence, the pairs $(c,f(c))$ and $(d,f(d))$ are as required.

	For the forward implication, 
	we prove that if $f$ is a strictly increasing function, then 
	for any $(a_0,b_0)$ and $(a_1,b_1)$ in $f$, 
	$\h(a_0 \land a_1) \le \h(b_0 \land b_1)$. 
	In other words, we prove that for all $a_0$ and $a_1$ in the domain of $f$, 
	$\h(a_0 \land a_1) \le \h(f(a_0) \land f(a_1))$. 
	Since $f$ is downwards closed, $a_0 \land a_1 \in \dom(f)$, 
	and as $f$ is strictly increasing, $f(a_0 \land a_1) \le_T f(a_0), f(a_1)$. 
	By the definition of meet, $f(a_0 \land a_1) \le_T f(a_0) \land f(a_1)$. 
	Hence, $\h(a_0 \land a_1) = \h(f(a_0 \land a_1)) \le \h(f(a_0) \land f(a_1))$.
\end{proof}

\begin{lemma}
	Let $T$ be a standard finite tree and let $f$ be a strictly increasing, level preserving, 
	and downwards closed partial function from $T$ to $T$.
	Then $f$ is injective iff for all $a_0$ and $a_1$ in the domain of $f$, 
	$\h(a_0 \land a_1) = \h(f(a_0) \land f(a_1))$.
\end{lemma}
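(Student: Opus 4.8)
The plan is to handle the two directions of the biconditional separately, pulling one inequality for free from the previous lemma. Since $f$ is assumed to be a strictly increasing, downwards closed, level preserving partial function, the previous lemma already gives $\h(a_0 \land a_1) \le \h(f(a_0) \land f(a_1))$ for all $a_0, a_1 \in \dom(f)$, with no appeal to injectivity. So all the content lies in the reverse inequality $\h(f(a_0) \land f(a_1)) \le \h(a_0 \land a_1)$, and the claim is that this reverse inequality (for all $a_0,a_1\in\dom(f)$) is equivalent to injectivity of $f$.

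For the forward implication, assume $f$ is injective and suppose toward a contradiction that there are $a_0, a_1 \in \dom(f)$ with $\gamma := \h(f(a_0) \land f(a_1)) > \h(a_0 \land a_1)$. Because $f$ is level preserving, $\gamma \le \h(f(a_0)) = \h(a_0)$ and likewise $\gamma \le \h(a_1)$, and $\gamma \in \h[T]$ since $f(a_0)\land f(a_1) \in T$. Let $a_0'$ be the unique element of $T_\gamma$ with $a_0' \le_T a_0$ (this is $a_0$ if $\gamma = \h(a_0)$, and $a_0 \res \gamma$ otherwise), and similarly $a_1'$; by downward closure $a_0', a_1' \in \dom(f)$, and by strict increase together with level preservation, $f(a_0')$ is the unique element of $T_\gamma$ that is $\le_T f(a_0)$, so $f(a_0') = f(a_0) \land f(a_1)$, and likewise $f(a_1') = f(a_0) \land f(a_1)$. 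Hence $f(a_0') = f(a_1')$. On the other hand $a_0' \ne a_1'$: if they were equal then $a_0' \le_T a_0 \land a_1$, forcing $\gamma = \h(a_0') \le \h(a_0 \land a_1) < \gamma$. This contradicts injectivity of $f$.

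For the reverse implication, assume $\h(a_0 \land a_1) = \h(f(a_0) \land f(a_1))$ for all $a_0, a_1 \in \dom(f)$, and suppose $f(a_0) = f(a_1)$ with $a_0 \ne a_1$. Level preservation gives $\h(a_0) = \h(f(a_0)) = \h(f(a_1)) = \h(a_1)$, so $a_0$ and $a_1$ are distinct elements of the same height and hence incomparable in $T$; therefore $a_0 \land a_1 <_T a_0$ and $\h(a_0 \land a_1) < \h(a_0)$. But $f(a_0) \land f(a_1) = f(a_0)$, so $\h(f(a_0) \land f(a_1)) = \h(a_0) > \h(a_0 \land a_1)$, contradicting the hypothesis. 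Thus $f$ is injective.

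The only point that needs a little care is in the forward direction, where $\gamma$ might equal $\h(a_0)$ or $\h(a_1)$, so that the drop-down notation $a_0\res\gamma$ is not literally defined; I sidestep this by working with ``the unique element of $T_\gamma$ lying $\le_T$-below $a_0$,'' which exists for any $\gamma \in \h[T]$ with $\gamma \le \h(a_0)$ by the tree axioms in Definition 2.1. Everything else is routine bookkeeping with these axioms and the definition of meet, so I do not expect any real obstacle.
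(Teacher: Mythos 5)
Your proof is correct and follows essentially the same approach as the paper: derive the easy inequality from the previous lemma, prove the converse by contrapositive exactly as the paper does, and prove the forward direction by dropping $a_0$ and $a_1$ down to level $\xi = \h(f(a_0)\land f(a_1))$ to produce two distinct elements with the same image. Your extra care with the edge case $\gamma=\h(a_i)$ (where $a_i\res\gamma$ is not literally defined by the paper's convention) is a legitimate minor tightening, but it is the same argument.
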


\begin{proof}
	By Lemma 3.5, for all $a_0$ and $a_1$ in the domain of $f$, 
	$\h(a_0 \land a_1) \le \h(f(a_0) \land f(a_1))$. 
	For the reverse implication, we prove the contrapositive: 
	suppose that $f$ is not injective, and we find $a_0$ and $a_1$ in the domain 
	of $f$ such that $\h(a_0 \land a_1) < \h(f(a_0) \land f(a_1))$. 
	Since $f$ is not injective, there are distinct $a_0$ and $a_1$ and some $b$ 
	such that $f(a_0) = b$ and $f(a_1) = b$. 
	Since $a_0$ and $a_1$ have the same height as $b$, and hence the same 
	height as each other, they are incomparable in $T$. 
	So $\h(a_0 \land a_1) < \h(a_0) = \h(b) = \h(f(a_0) \land f(a_1))$.

	For the forward implication, 
	suppose that $f$ is injective and we show that for all $a_0$ and $a_1$ 
	in the domain of $f$, $\h(a_0 \land a_1) \ge \h(f(a_0) \land f(a_1))$. 
	Suppose for a contradiction that $\h(a_0 \land a_1) < \h(f(a_0) \land f(a_1))$. 
	Let $\xi = \h(f(a_0) \land f(a_1))$. 
	Then $\h(a_0 \land a_1) < \xi$. 
	Note that $\xi \le \h(f(a_0)) = \h(a_0)$ and $\xi \le \h(f(a_1)) = \h(a_1)$, 
	so $a_0 \res \xi$ and $a_1 \res \xi$ are defined. 
	As $\h(a_0 \land a_1) < \xi$, $a_0 \res \xi \ne a_1 \res \xi$. 
	Since $f$ is downwards closed, $a_0 \res \xi$ and $a_1 \res \xi$ are in the domain of $f$, 
	and as $f$ is strictly increasing and level preserving, 
	$f(a_0 \res \xi) = f(a_0) \res \xi = f(a_0) \land f(a_1) = f(a_1) \res \xi = 
	f(a_1 \res \xi)$, which contradicts that $f$ is injective.
\end{proof}

The next lemma follows immediately from the previous one.

\begin{lemma}
	Let $T$ be a standard finite tree and let $f$ be a standard function on $T$. 
	Then for all $a_0$ and $a_1$ in the domain of $f$, 
	$\h(a_0 \land a_1) = \h(f(a_0) \land f(a_1))$.
\end{lemma}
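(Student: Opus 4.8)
The plan is simply to invoke Lemma 3.6. Recall that, by Definition 3.2, a standard function $f$ on $T$ is by definition injective, strictly increasing, level preserving, and downwards closed (the remaining clause in Definition 3.2, that $f$ fixes no point other than $0$, will play no role here). Hence $f$ satisfies all of the hypotheses of Lemma 3.6.

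Now apply the forward direction of the biconditional in Lemma 3.6: since $f$ is in particular injective, we conclude that for all $a_0$ and $a_1$ in $\dom(f)$, we have $\h(a_0 \land a_1) = \h(f(a_0) \land f(a_1))$. This is precisely the asserted conclusion, so the proof is complete.

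I expect no genuine obstacle here: the substantive work was already carried out in Lemmas 3.5 and 3.6, and the only thing to check is the purely definitional observation that a standard function meets the hypotheses of Lemma 3.6. Accordingly, the proof should be a single sentence citing Definition 3.2 and Lemma 3.6.
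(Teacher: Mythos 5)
Your proposal is correct and matches the paper exactly: the paper introduces this lemma with the remark that it ``follows immediately from the previous one,'' i.e.\ from Lemma 3.6, which is precisely the single-sentence citation you give.
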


\begin{lemma}
	Let $T$ be a standard finite tree and let $f$ be a standard function on $T$. 
	Suppose that $U$ is a simple extension of $T$. 
	Let $g$ be the downward closure of $f$ in $U$. 
	Then:
	\begin{itemize}
		\item $g$ is a standard function on $U$;
		\item $g \res T = f$;
		\item if $\alpha \in \h[U] \setminus \h[T]$ is less than $\max(\h[T])$ 
		and $\beta$ is the least element 
		of $\h[T]$ greater than $\alpha$, then 
		for all $x, y \in U_\beta$, 
		$g(x) = y$ iff $g(x \res \alpha) = y \res \alpha$.
	\end{itemize}
\end{lemma}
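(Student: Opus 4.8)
The plan is to verify the three bullet points in order, with the bulk of the work going into the first (that $g$ is a standard function on $U$). Throughout, recall that $U$ end-extends $T$ (shown in Section 2) and that, unwinding the definition, $g$ is the set of pairs $(a,b) \in U \otimes U$ for which there is some $x \in \dom(f)$ with $a \le_U x$ and $b \le_U f(x)$.

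First I would dispose of $g \res T = f$, which also yields $\dom(g) \cap T = \dom(f)$. The inclusion $f \subseteq g$ is immediate (take the witness $x = a$). Conversely, suppose $(a,b) \in g$ with $a \in T$, witnessed by $x \in \dom(f)$. Since $a, x \in T$ and $U$ end-extends $T$, $a \le_T x$; since $f$ is a standard function on $T$, $a \in \dom(f)$ and $f(a) \le_T f(x)$; and since $b$ and $f(a)$ both lie below $f(x)$ in $U$ at height $\h(a)$, $b = f(a)$. A drop-down argument using Definition 2.1(4) also shows that $a \in T$ forces $b \in T$ whenever $(a,b) \in g$, so $g \res T$ is unambiguous.

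Next I would show $g$ is a standard function on $U$. That $g$ is level preserving is built into its definition, and that it is downwards closed in $U$ is clear since $a \le_U x$ implies $a \res_U \delta \le_U x$. To see $g$ has no fixed point other than $0$: if $(a,a) \in g$ with $a \ne 0$ and $x$ is a witness, then $x \ne 0$ (else $a \le_U f(x) = 0$), so by Lemma 2.4 $z := x \land_U f(x) = x \land_T f(x)$, and $\h(z) < \h(x)$ because $f(x) \ne x$; but $z \le_T x$ gives $z \in \dom(f)$, and then $f(z)$ and $z$ both lie below $f(x)$ at height $\h(z)$, so $f(z) = z$, forcing $z = 0$ and hence the contradiction $a \le_U z = 0$. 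For the remaining properties, $g \subseteq U \otimes U$ is closed under passing to pairs below (immediate from its definition), so it suffices to prove that $\h(a_0 \land_U a_1) = \h(b_0 \land_U b_1)$ for all $(a_0,b_0),(a_1,b_1) \in g$; then Lemma 3.5 (applied with $U$ in place of $T$) gives that $g$ is a strictly increasing function, and Lemma 3.6 gives that $g$ is injective. Fix witnesses $x_0, x_1 \in \dom(f)$; by Lemma 2.4, $x_0 \land_T x_1 = x_0 \land_U x_1$ and $f(x_0) \land_T f(x_1) = f(x_0) \land_U f(x_1)$, and by Lemma 3.8 these two meets have the same height. For the inequality $\le$: with $\mu = \h(a_0 \land_U a_1)$, we have $a_0 \land_U a_1 \le_U x_i$, so $\h(x_0 \land_U x_1) \ge \mu$, hence $f(x_0)$ and $f(x_1)$ agree below level $\mu$; since $b_i \le_U f(x_i)$ and $\h(b_i) = \h(a_i) \ge \mu$, also $b_0$ and $b_1$ agree below $\mu$, i.e.\ $\h(b_0 \land_U b_1) \ge \mu$. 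For the inequality $\ge$: with $\nu = \h(b_0 \land_U b_1) \le \h(b_i) = \h(a_i)$, from $b_i \le_U f(x_i)$ we get that $f(x_0)$ and $f(x_1)$ agree below $\nu$, so by Lemma 3.8 so do $x_0$ and $x_1$, and since $a_i \le_U x_i$ with $\h(a_i) \ge \nu$, $a_0$ and $a_1$ agree below $\nu$, i.e.\ $\h(a_0 \land_U a_1) \ge \nu$.

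Finally, for the third bullet point, let $\alpha,\beta$ be as stated, so $U_\beta = T_\beta$ and $U_\beta$ has unique drop-downs to $\alpha$ by Definition 2.2. If $g(x) = y$ then $f(x) = y$ by the first bullet, and since $x \res_U \alpha \le_U x$ the pair $(x \res_U \alpha, y \res_U \alpha)$ lies in $g$, whence $g(x \res_U \alpha) = y \res_U \alpha$ because $g$ is a function. Conversely, if $g(x \res_U \alpha) = y \res_U \alpha$, pick a witness $w \in \dom(f)$ for $x \res_U \alpha \in \dom(g)$; then $\h(w) \ge \beta$, and $w \res_U \beta$ and $x$ are elements of $U_\beta$ with the common drop-down $x \res_U \alpha$ to $\alpha$, so $x = w \res_U \beta \le_U w$ and thus $x \in \dom(f)$; writing $y' = g(x) = f(x) \in U_\beta$, the forward direction gives $y' \res_U \alpha = g(x \res_U \alpha) = y \res_U \alpha$, so $y = y'$ by unique drop-downs. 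The hard part will be the meet computation above: a pair of $g$ whose first coordinate sits at a level outside $\h[T]$ is only witnessed from above by a genuine pair of $f$, so one must push up to the witnesses in $T$, transfer the height of the meet across $f$ via Lemma 3.8, and pull back down, checking throughout that the relevant restrictions are defined; the rest is bookkeeping.
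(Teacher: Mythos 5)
Your argument is correct and takes a genuinely different route from the paper's. The paper reduces to the case where $\h[U] \setminus \h[T]$ is a singleton (inducting on $|\h[U] \setminus \h[T]|$ via Lemma 2.3), explicitly characterizes the pairs of $g \setminus f$ as drop-downs of pairs $(a^+,b^+) \in f$ with $a^+,b^+ \in T_\beta$, and then proves $\h(a_0\land_U a_1)=\h(b_0\land_U b_1)$ by a three-case analysis (both pairs in $f$, both in $g\setminus f$, or one of each), invoking Lemma 2.5 in the mixed case. You instead handle an arbitrary simple extension in one stroke: for any two pairs of $g$ you pass to witnesses $x_0,x_1 \in \dom(f)$, transfer the meet height across $f$ at the level of $T$ via Lemma 3.7 and Lemma 2.4, and then pull back down to the $a_i$'s and $b_i$'s by the two symmetric inequalities. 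This avoids both the induction and the case split; the price is that it relies more directly on Lemma 2.4 holding for simple (not merely arbitrary) extensions, which is where the unique-drop-downs clause of Definition 2.2 gets used. Your no-fixed-points argument (via $z := x \land_T f(x)$ being a fixed point of $f$, hence $z = 0$) is also different from the paper's (which uses unique drop-downs of $T_\beta$ to $\alpha$ in the singleton case) and is a nice observation. The paper leaves your second and third bullets to the reader, which you fill in correctly. Two small slips to fix: the result you invoke for the meet-height transfer is Lemma 3.7, not "Lemma 3.8" (which is the lemma being proved — a circular citation as written); and in the third bullet point, $g(x)=y\Rightarrow f(x)=y$ follows from the \emph{second} bullet point, not the first.
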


\begin{proof}
	We leave it to the reader to verify the second and third bullet points, 
	which is straightforward. 
	We prove that $g$ is a standard function on $U$. 
	It suffices to prove this in the special case that 
	$\h[U] \setminus \h[T]$ is a singleton. 
	For then we can prove the general statement 
	by induction on the size of $\h[U] \setminus \h[T]$. 
	So let $\h[U] \setminus \h[T]$ consist of one ordinal $\alpha$. 
	If $\alpha > \max(\h[T])$, then $f = g$, and easily $f$ is a standard function on $U$. 
	Assume that $\alpha < \max(\h[T])$ and let $\beta = \min(\h[T] \setminus (\alpha+1))$. 

	Since $f$ is downwards closed, if $(a,b) \in g \setminus f$, 
	then $a$ and $b$ have height $\alpha$ and hence neither of them are in $T$. 
	So $(a,b) \in g \setminus f$ iff $a$ and $b$ have height $\alpha$ and there are 
	$a^+$ and $b^+$ in $T_\beta$ above $a$ and $b$ respectively such that 
	$f(a^+) = b^+$. 
	If $a = b$, then since $U$ is a simple extension of $T$, 
	$a^+ = b^+$, contradicting the fact that $f$ is a standard function. 
	So assuming that $g$ is a function, it has no fixed points other than $0$.

	By definition, $g \subseteq U \otimes U$ is downwards closed, so if $g$ is a function 
	then it is obviously level preserving. 
	So it suffices to prove that $g$ is an injective 
	strictly increasing function. 
	By Lemmas 3.5 and 3.6, it suffices to prove that whenever $(a_0,b_0)$ and $(a_1,b_1)$ 
	are in $g$, then $\h(a_0 \land_U a_1) = \h(b_0 \land_U b_1)$. 
	We consider three cases.

	Case 1: $(a_0,b_0)$ and $(a_1,b_1)$ are both in $f$. 
	By Lemmas 2.4 and 3.7, $\h(a_0 \land_U a_1) = \h(a_0 \land_T a_1) = 
	\h(b_0 \land_T b_1) = \h(b_0 \land_U b_1)$.

	Case 2: $(a_0,b_0)$ and $(a_1,b_1)$ are both in $g \setminus f$. 
	Then $a_0$, $b_0$, $a_1$, and $b_1$ are in $U_\alpha$. 
	Fix $a_0^+$, $b_0^+$, $a_1^+$, and $b_1^+$ 
	in $T_\beta$ above $a_0$, $b_0$, $a_1$, and $b_1$ respectively 
	such that $f(a_0^+) = b_0^+$ and $f(a_1^+) = b_1^+$. 
	First, assume that $a_0 \ne a_1$. 
	Then $a_0^+ \ne a_1^+$, and since $f$ is injective, $b_0^+ \ne b_1^+$. 
	By unique drop-downs, $b_0 \ne b_1$. 
	Hence, $a_0 \land_U a_1 = a_0^+ \land_T a_1^+$ and 
	$b_0 \land_U b_1 = b_0^+ \land_T b_1^+$. 
	So $\h(a_0 \land_U a_1) = \h(a_0^+ \land_T a_1^+)$, which by Lemma 3.7 is equal to 
	$\h(b_0^+ \land_T b_1^+) = \h(b_0 \land_U b_1)$.
	Secondly, assume that $a_0 = a_1$. 
	By unique drop-downs, $a_0^+ = a_1^+$. 
	So $b_0^+ = b_1^+$, and hence $b_0 = b_1$. 
	Therefore, $\h(a_0 \land a_1) = \alpha = \h(b_0 \land b_1)$.

	Case 3: One of $(a_0,b_0)$ and $(a_1,b_1)$ is in 
	$g \setminus f$ and the other is in $f$. 
	Without loss of generality, assume that $(a_0,b_0)$ is in $g \setminus f$ and $(a_1,b_1)$ 
	is in $f$. 
	Then $a_0$ and $b_0$ are in $U_\alpha$, and there are $a_0^+$ and $b_0^+$ 
	above $a_0$ and $b_0$ respectively with height $\beta$ such that 
	$f(a_0^+) = b_0^+$. 
	If $a_1 \ge_T a_0^+$, then since $f$ is strictly increasing, $b_1 \ge_T b_0^+$. 
	Hence, $a_0 <_U a_1$ and $b_0 <_U b_1$. 
	So $\h(a_0 \land_U a_1) = \h(a_0) = \h(b_0) = \h(b_0 \land_U b_1)$. 
	If $a_1 <_T a_0^+$, then since $f$ is strictly increasing, $b_1 <_T b_0^+$, 
	so $a_1 <_U a_0$ and $b_1 <_U b_0$. 	
	Therefore, $\h(a_0 \land_U a_1) = \h(a_1) = \h(b_1) = \h(b_0 \land_U b_1)$.
	
	Finally, assume that $a_0^+$ and $a_1$ are incomparable in $T$. 
	By Lemma 3.3, $b_0^+$ and $b_1$ are also incomparable in $T$. 
	Since $U$ is a simple extension of $T$, by Lemma 2.5 we have that 
	$a_0 \land_U a_1 = a_0^+ \land_T a_1$ and 
	$b_0 \land_U b_1 = b_0^+ \land_T b_1$. 
	By Lemma 3.7, $\h(a_0^+ \land_T a_1) = \h(b_0^+ \land_T b_1)$. 
	So $\h(a_0 \land_U a_1) = \h(a_0^+ \land_T a_1) = 
	\h(b_0^+ \land_T b_1) = \h(b_0 \land_U b_1)$.
\end{proof}

\section{Consistency and \texorpdfstring{$\rho$}{rho}-Separation}

In this section, we introduce and develop some of the main tools we use to define and analyze 
our forcing, namely, 
consistency and $\rho$-separation. 
These ideas are natural modifications to the finite context 
of the notions of consistency and separation 
which were introduced recently by Stejskalov\'{a} and the first author for the purpose 
of forcing automorphisms of an $\omega_1$-tree with countable conditions. 
Roughly speaking, $\rho$-separation allows configurations of indexed families of automorphisms 
which are prohibited by the original definition of separation, 
but only if they occur low enough in the tree according to 
the function $\rho$.

\begin{definition}[Consistency]
	Let $T$ be a standard finite tree and let $f$ be a standard function on $T$. 
	Let $\beta < \alpha$ be ordinals in $\h[T]$. 
	Suppose that $X \subseteq T_\alpha$ and $X$ has unique drop-downs to $\beta$. 
	We say that $X \res \beta$ and $X$ are \emph{$f$-consistent} if for all 
	$x, y \in X$, $f(x \res \beta) = y \res \beta$ iff $f(x) = y$.
\end{definition}

Note that in the above definition, since $f$ is strictly increasing, 
$f(x) = y$ implies that $f(x \res \beta) = y \res \beta$. 
So consistency is equivalent to the upward direction of the definition, namely, 
that for all $x, y \in X$, 
if $f(x \res \beta) = y \res \beta$ then $f(x) = y$.

The proof of the first part of the following lemma is routine, and the 
second part follows immediately from Lemma 3.8.

\begin{lemma}
	Let $T$ be a standard finite tree and let $f$ be a standard function on $T$.
	\begin{enumerate}
	\item Let $\beta < \delta < \alpha$ be ordinals in $\h[T]$. 
	Suppose that $X \subseteq T_\alpha$, $X$ has unique drop-downs to $\beta$, 
	and $X \res \beta$ and $X$ are $f$-consistent. 
	Then $X \res \delta$ has unique drop-downs to $\beta$ and 
	$X \res \beta$ and $X \res \delta$ are $f$-consistent.
	\item Suppose that $U$ is a simple extension of $T$ and $g$ 
	is the downward closure of $f$ in $U$. 
	If $\alpha \in \h[U] \setminus \h[T]$ is less than $\max(\h[T])$ 
	and $\beta$ is the least element of $\h[T]$ 
	above $\alpha$, then $U_\beta \res \alpha$ and $U_\beta$ are $g$-consistent.	
	\end{enumerate}
\end{lemma}

\begin{definition}
	Let $T$ be a standard finite tree and let $\{ f_\xi : \xi \in A \}$ be a finite 
	indexed family of standard functions on $T$. 
	Let $\alpha \in \h[T]$.
	\begin{itemize}
	\item (Separation) 
	Let $a_0,\ldots,a_{n-1}$ be distinct elements of $T_\alpha$. 
	We say that $\{ f_\xi : \xi \in A \}$ is \emph{separated on $(a_0,\ldots,a_{n-1})$} 
	if for all $i < n$, there exists at most one triple $(j,m,\tau)$ 
	such that $j < i$, $m \in \{ 1, -1 \}$, $\tau \in A$, 
	and $f_{\tau}^m(a_i) = a_j$.
	\item (Separation for Sets) 	
	Let $X \subseteq T_\alpha$. 
	We say that $\{ f_\xi : \xi \in A \}$ is \emph{separated on $X$} if there 
	exists some injective tuple $\vec a = (a_0,\ldots,a_{n-1})$ which lists the elements of 
	$X$ such that $\{ f_\xi : \xi \in A \}$ is separated on $\vec a$.
	\end{itemize}
\end{definition}

Under the assumptions of the above definition, 
for $i, j < n$ we sometimes refer to an equation of the 
form $f_\tau^m(x) = y$, where $\tau \in A$ and $m \in \{ -1, 1 \}$, 
as a \emph{relation between $x$ and $y$ 
with respect to $\{ f_\xi : \xi \in A \}$}. 
Note that if $\{ f_\xi : \xi \in A \}$ is separated on $(a_0,\ldots,a_{n-1})$, then 
for any $B \subseteq A$, $\{ f_\xi : \xi \in B \}$ is 
separated on $(a_0,\ldots,a_{n-1})$.

\begin{lemma}
	Let $T$ be a standard finite tree and let $\{ f_\xi : \xi \in A \}$ be a finite 
	indexed family of standard functions on $T$. 
	Suppose that $\alpha \in \h[T]$, 
	$X \subseteq T_\alpha$, and $\{ f_\xi : \xi \in A \}$ is separated on $X$. 
	Then for all $x, y \in X$ and for all pairs $(m,\tau)$ and $(n,\sigma)$ in 
	$\{ -1, 1 \} \times A$, if $f_\tau^m(x) = y$ and $f_\sigma^n(x) = y$, then 
	$(m,\tau) = (n,\sigma)$.
\end{lemma}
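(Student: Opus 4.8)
The plan is to reduce the statement to the definition of separation by fixing a witnessing injective listing of $X$ and then showing that the two relations between $x$ and $y$ force the pairs $(m,\tau)$ and $(n,\sigma)$ to coincide. So first I would invoke Definition 4.6 to obtain an injective tuple $\vec a = (a_0,\ldots,a_{k-1})$ listing $X$ on which $\{ f_\xi : \xi \in A \}$ is separated. Since $x$ and $y$ are distinct elements of $X$ (they must be distinct, because $f_\tau^m$ and $f_\sigma^n$ are standard functions or their inverses, hence have no fixed points other than $0$, and $\alpha > 0$), there are indices $i \ne j$ with $x = a_i$ and $y = a_j$.

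The key step is a case split on whether $i < j$ or $j < i$. Suppose $j < i$. Then $f_\tau^m(a_i) = a_j$ and $f_\sigma^n(a_i) = a_j$ are both triples of the form $(j', m', \tau')$ with $j' < i$, $m' \in \{1,-1\}$, $\tau' \in A$, and $f_{\tau'}^{m'}(a_i) = a_{j'}$ — namely the triples $(j, m, \tau)$ and $(j, n, \sigma)$. By the separation hypothesis applied at index $i$, there is at most one such triple, so $(j, m, \tau) = (j, n, \sigma)$, whence $(m,\tau) = (n,\sigma)$. Now suppose instead that $i < j$. Applying $f_\tau^{-m}$ to $f_\tau^m(x) = y$ gives $f_\tau^{-m}(y) = x$, and similarly $f_\sigma^{-n}(y) = x$; note that $-m, -n \in \{1,-1\}$ still, and here the inverse of a standard function applied once is exactly the power $m' = -m$ (and a double inverse cancels since $m \in \{1,-1\}$). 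So $f_\tau^{-m}(a_j) = a_i$ and $f_\sigma^{-n}(a_j) = a_i$ are both triples $(i', m', \tau')$ with $i' < j$; by separation at index $j$ there is at most one, so $(i, -m, \tau) = (i, -n, \sigma)$, giving $m = n$ and $\tau = \sigma$.

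The only real subtlety — and the point worth spelling out — is the bookkeeping with the exponents: one must check that $f_\tau^m$ for $m \in \{1,-1\}$ is literally the object referred to in Definition 4.5 (i.e.\ that $(f_\tau^m)^{-1} = f_\tau^{-m}$ as partial functions, so that "applying the inverse" of a relation is again a relation of the permitted form), and that $-m$ ranges over $\{1,-1\}$ exactly when $m$ does. Both are immediate from the definition of inverse of an injective partial function, so I do not anticipate any genuine obstacle; this lemma is essentially an unwinding of the "at most one triple" clause together with the symmetry of relations under inversion.
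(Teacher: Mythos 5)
Your proposal is correct and follows essentially the same route as the paper: fix a witnessing injective listing of $X$, locate $x$ and $y$ at indices $i$ and $j$, and invoke the ``at most one triple'' clause after possibly inverting the relations so that the larger index sits in the argument position. The paper compresses your two-case split into a single ``without loss of generality, replace $m,n$ by $-m,-n$'' step, but the underlying argument is identical; your explicit note that $x \ne y$ (since standard functions fix only $0$ and $\alpha > 0$) makes transparent a point the paper leaves implicit.
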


\begin{proof}
	Fix an injective tuple $\vec a = (a_0,\ldots,a_{q-1})$ which lists the elements of $X$ 
	so that $\{ f_\xi : \xi \in A \}$ is separated on $\vec a$. 
	Fix $i, j < q$ so that $a_i = x$ and $a_j = y$. 
	By replacing $m$ and $n$ with $-m$ and $-n$ if necessary, 
	we may assume without loss of generality that $j < i$. 
	Then $f_\tau^m(a_j) = a_i$ and $f_\sigma^n(a_j) = a_i$, which by separation 
	imply that the triples $(i,m,\tau)$ and $(i,n,\sigma)$ are equal. 
	So $(m,\tau) = (n,\sigma)$.
\end{proof}

\begin{lemma}[Strong Persistence]
	Let $T$ be a standard finite tree and let $\{ f_\xi : \xi \in A \}$ be a finite 
	indexed family of standard functions on $T$. 
	Let $\alpha < \beta$ be in $\h[T]$ and let $X \subseteq T_\alpha$. 
	Assume that $\{ f_\xi : \xi \in A \}$ is separated on $X$. 
	Then $\{ f_\xi : \xi \in A \}$ is separated on the set 
	$Y = \{ b \in T_\beta : \exists x \in X \ x <_T b \}$.
\end{lemma}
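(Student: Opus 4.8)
The plan is to obtain a separating enumeration of $Y$ by blowing up a separating enumeration of $X$. For each $b \in Y$ let $\pi(b) = b \res_T \alpha$ be the unique element of $T_\alpha$ below $b$; since $b$ lies above some member of $X$, in fact $\pi(b) \in X$. Two facts drive the argument: (i) if $b, b' \in Y$, $\tau \in A$, and $m \in \{1,-1\}$ satisfy $f_\tau^m(b) = b'$, then $f_\tau^m(\pi(b)) = \pi(b')$; and (ii) no two distinct elements of a single fiber $\pi^{-1}(a) \cap Y$ are related with respect to $\{ f_\xi : \xi \in A \}$. For (i) with $m = 1$: since $f_\tau$ is downwards closed we have $\pi(b) \in \dom(f_\tau)$, and since $f_\tau$ is strictly increasing and level preserving, $f_\tau(\pi(b))$ has height $\alpha$ and lies below $f_\tau(b) = b'$, hence equals $\pi(b')$; the case $m = -1$ follows by applying the case $m = 1$ to the equation $f_\tau(b') = b$. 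For (ii), a relation $f_\tau^m(b) = b'$ with $\pi(b) = \pi(b') = a$ would give $f_\tau^m(a) = a$, hence $f_\tau(a) = a$, which is impossible because $a \in T_\alpha$ and $\alpha > 0$ force $a \ne 0$, and a standard function has no nonzero fixed point.

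Next I would fix an injective tuple $\vec a = (a_0,\ldots,a_{n-1})$ listing $X$ on which $\{ f_\xi : \xi \in A \}$ is separated, and enumerate $Y$ as $(b_0,\ldots,b_{q-1})$ so that, defining $k(s)$ by $\pi(b_s) = a_{k(s)}$, the map $s \mapsto k(s)$ is non-decreasing; that is, list the elements of the fiber $\pi^{-1}(a_0) \cap Y$ first, then those of $\pi^{-1}(a_1) \cap Y$, and so on, ordering each fiber arbitrarily. To verify separation on this tuple, suppose that $(j,m,\tau)$ and $(j',m',\tau')$ both witness relations $f_\tau^m(b_i) = b_j$ and $f_{\tau'}^{m'}(b_i) = b_{j'}$ with $j, j' < i$. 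Writing $k = k(i)$, fact (i) gives $f_\tau^m(a_k) = a_{k(j)}$ and $f_{\tau'}^{m'}(a_k) = a_{k(j')}$; since $a_k$ is a nonzero non-fixed point of $f_\tau$ and of $f_{\tau'}$, we cannot have $k(j) = k$ or $k(j') = k$, and then monotonicity of $k$ together with $j, j' < i$ forces $k(j) < k$ and $k(j') < k$. Applying the separation of $\{ f_\xi : \xi \in A \}$ on $\vec a$ at index $k$ yields $(k(j),m,\tau) = (k(j'),m',\tau')$, so in particular $m = m'$ and $\tau = \tau'$; then $b_j = f_\tau^m(b_i) = f_{\tau'}^{m'}(b_i) = b_{j'}$, and since the $b_s$ are distinct, $j = j'$. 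Hence $(j,m,\tau) = (j',m',\tau')$, so $\{ f_\xi : \xi \in A \}$ is separated on $(b_0,\ldots,b_{q-1})$ and therefore on $Y$.

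Every step is routine once the projection $\pi$ is in play. The one point that demands a little care is fact (i): checking that a relation occurring at level $\beta$ really does project to a relation at level $\alpha$, in both directions $m = \pm 1$, which is exactly where the closure properties of standard functions (downward closure, level preservation, strict monotonicity, and strict monotonicity of the inverse from Lemma 3.3) are used. Combined with the observation in (ii) that the fibers are relation-free, this lets the block enumeration inherit separation directly from $\vec a$, which is the heart of the argument.
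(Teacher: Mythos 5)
Your proof is correct and takes essentially the same approach as the paper: fix a separating enumeration of $X$, enumerate $Y$ in blocks according to the projection to $X$, project relations at level $\beta$ down to level $\alpha$ using strict monotonicity and downward closure, note that the no-nonzero-fixed-points property rules out relations within a fiber, and invoke separation on $\vec a$ to conclude. The paper phrases the final verification as a proof by contradiction and you phrase it as a direct uniqueness check, but the mechanism is identical.
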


\begin{proof}
	Fix an injective tuple $\vec a = (a_0,\ldots,a_{n-1})$ which lists the elements 
	of $X$ so that $\{ f_\xi : \xi \in A \}$ is separated on $\vec a$. 
	Now let $\vec b = (b_0,\ldots,b_{p-1})$ be any injective tuple which lists 
	the elements of $Y$ so that for all $j < i < p$, 
	if $j^*$ and $i^*$ are such that 
	$b_j \res \alpha = a_{j^*}$ and $b_i \res \alpha = a_{i^*}$, 
	then $j^* \le i^*$. 
	For each $i < p$, let $i^* < n$ be such that $b_i \res \alpha = a_{i^*}$.
	
	Suppose for a contradiction that $i < p$ and 
	there exist distinct triples $(j_0,m_0,\tau_0)$ and $(j_1,m_1,\tau_1)$ such that 
	for each $k < 2$, $j_k < i$, $m_k \in \{ 1, -1 \}$, $\tau_k \in A$, 
	and $f_{\tau_k}^{m_k}(b_i) = b_{j_k}$. 
	Consider $k < 2$. 
	Then $j_k^* \le i^*$. 
	Since $f_{\tau_k}^{m_k}$ is strictly increasing, 
	$f_{\tau_k}^{m_k}(a_{i^*}) = a_{j_k^*}$. 
	As $f_{\tau_k}$ has no fixed points other than $0$, $j_{k^*} < i^*$. 
	Because $\{ f_\xi : \xi \in A \}$ is separated on $\vec a$, the triples 
	$(j_0^*,m_0,\tau_0)$ and $(j_1^*,m_1,\tau_1)$ are equal. 
	Hence, $m_0 = m_1$ and $\tau_0 = \tau_1$. 
	So $b_{j_0} = f_{\tau_0}^{m_0}(b_i) = f_{\tau_1}^{m_1}(b_i) = b_{j_1}$. 
	Therefore, $j_0 = j_1$. 
	But then the triples 
	$(j_0,m_0,\tau_0)$ and $(j_1,m_1,\tau_1)$ are equal, 
	which is a contradiction.
\end{proof}

For the remainder of the article we assume that $\Box_{\omega_1}$ holds, and we work with a 
function $\rho$ whose existence follows from $\Box_{\omega_1}$. 
The function $\rho$ was introduced by Todor\v{c}evi\'{c} \cite[Section 2]{todorpartition}. 
The basic properties of $\rho$ which we use are as follows:
\begin{itemize}
	\item $\rho$ is a function with domain $\omega_2^2$ and codomain $\omega_1$;
	\item $\rho(\alpha,\alpha) = 0$ for all $\alpha < \omega_2$;
	\item $\rho(\alpha,\beta) = \rho(\beta,\alpha)$ for all $\alpha, \beta < \omega_2$;
	\item let $F$ be an uncountable family of finite subsets of $\omega_2$ and 
	let $\mu < \omega_1$; then there exists an uncountable set $F' \subseteq F$ 
	such that for all distinct $x$ and $y$ in $F'$, 
	for all $\tau \in x \setminus y$, $\zeta \in y \setminus x$, 
	and $\gamma \in x \cap y$, 
	$$
	\rho(\tau,\zeta) \ge \max\{ \min \{ \rho(\tau,\gamma), \rho(\zeta,\gamma) \}, \mu \}.
	$$
	\end{itemize}

We refer to the last bullet point above as the \emph{special property of $\rho$}. 
It was proven in \cite[Lemma 7.4.7]{todorbook} (also see \cite[Lemma 4.5]{boban}).

For the remainder of this section, whenever we mention a finite indexed family, 
we assume implicitly that the index set is a subset of $\omega_2$.

We now introduce our $\rho$-variation of the concept of separation.

\begin{definition}
	Let $T$ be a standard finite tree and let $\{ f_\xi : \xi \in A \}$ be a finite 
	indexed family of standard functions on $T$. 
	Let $\alpha \in \h[T]$.
	\begin{itemize}
	\item ($\rho$-Separation) 
	Let $a_0,\ldots,a_{n-1}$ be distinct elements of $T_\alpha$. 
	We say that $\{ f_\xi : \xi \in A \}$ is 
	\emph{$\rho$-separated on $(a_0,\ldots,a_{n-1})$} if 
	for all $i < n$, if $(j_0,m_0,\tau_0)$ and $(j_1,m_1,\tau_1)$ are distinct triples 
	satisfying that for each $k < 2$, $j_k < i$, $m_k \in \{ -1, 1 \}$, 
	$\tau_k \in A$, and $f_{\tau_k}^{m_k}(a_i) = a_{j_k}$, then 
	$j_0 = j_1$ and $\rho(\tau_0,\tau_1) \ge \alpha$.
	\item ($\rho$-Separation for Sets) Let $X \subseteq T_\alpha$. 
	We say that $\{ f_\xi : \xi \in A \}$ is \emph{$\rho$-separated on $X$} if there 
	exists some injective tuple $\vec a = (a_0,\ldots,a_{n-1})$ which lists the elements of 
	$X$ such that $\{ f_\xi : \xi \in A \}$ is $\rho$-separated on $\vec a$.
	\end{itemize}
\end{definition}

Note that separation implies $\rho$-separation. 
If $\{ f_\xi : \xi \in A \}$ is $\rho$-separated on the tuple $(a_0,\ldots,a_{n-1})$, then 
for any $B \subseteq A$, $\{ f_\xi : \xi \in B \}$ is 
$\rho$-separated on $(a_0,\ldots,a_{n-1})$.

\begin{lemma}
	Let $T$ be a standard finite tree and let $\{ f_\xi : \xi \in A \}$ be a finite 
	indexed family of standard functions on $T$. 
	Let $\alpha \in \h[T]$. 
	\begin{enumerate}
	\item Let $a_0,\ldots,a_{n-1}$ be distinct elements of $T_\alpha$. 
	Suppose that for all $\sigma$ and $\tau$ in $A$, $\rho(\sigma,\tau) < \alpha$. 
	Then $\{ f_\xi : \xi \in A \}$ is separated on $(a_0,\ldots,a_{n-1})$ iff 
	$\{ f_\xi : \xi \in A \}$ is $\rho$-separated on $(a_0,\ldots,a_{n-1})$.
	\item Let $X \subseteq T_\alpha$. 
	Suppose that $\{ f_\xi : \xi \in A \}$ is $\rho$-separated on $X$. 
	If $Y \subseteq T_\alpha \setminus X$ and for all $\tau \in A$, 
	$Y$ is disjoint from the domain and range of $f_\tau$, 
	then $\{ f_\xi : \xi \in A \}$ is $\rho$-separated on $X \cup Y$.
	\end{enumerate}
\end{lemma}

\begin{proof}
	(1) is easy. 
	(2) Fix an injective tuple $\vec a$ which lists the elements 
	of $X$ so that $\{ f_\xi : \xi \in A \}$ 
	is $\rho$-separated on $\vec a$. 
	Let $\vec b$ be any injective tuple which lists the elements of $Y$. 
	Since there are no relations between the elements of $Y$ and $X \cup Y$ 
	with respect to $\{ f_\xi : \xi \in A \}$, 
	it easily follows that 
	$\{ f_\xi : \xi \in A \}$ is $\rho$-separated on the concatenation 
	$\vec a^{\frown}\vec b$.
\end{proof}

\begin{lemma}[Downward Persistence]
	Let $T$ be a standard finite tree and let $\{ f_\xi : \xi \in A \}$ be a finite 
	indexed family of standard functions on $T$. 
	Let $\alpha < \beta$ be in $\h[T]$. 
	Suppose that $X \subseteq T_\beta$ has unique drop-downs 
	to $\alpha$ and for all $\tau \in A$, 
	$X \res \alpha$ and $X$ are $f_\tau$-consistent. 
	If $\{ f_\xi : \xi \in A \}$ is $\rho$-separated on $X$, 
	then $\{ f_\xi : \xi \in A \}$ is $\rho$-separated on $X \res \alpha$.
\end{lemma}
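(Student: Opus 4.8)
The plan is to take an injective tuple $\vec a = (a_0,\ldots,a_{n-1})$ which lists the elements of $X$ and witnesses that $\{ f_\xi : \xi \in A \}$ is $\rho$-separated on $X$, and to show that the tuple $(a_0 \res \alpha,\ldots,a_{n-1} \res \alpha)$ witnesses that $\{ f_\xi : \xi \in A \}$ is $\rho$-separated on $X \res \alpha$. Since $X$ has unique drop-downs to $\alpha$, this tuple is injective and lists all of $X \res \alpha$, and its entries lie in $T_\alpha$ because $\alpha \in \h[T] \cap \beta$.

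First I would isolate the following transfer principle: for $i, j < n$, $m \in \{ -1, 1 \}$, and $\tau \in A$, if $f_\tau^m(a_i \res \alpha) = a_j \res \alpha$, then $f_\tau^m(a_i) = a_j$. When $m = 1$, this is immediate from the $f_\tau$-consistency of $X \res \alpha$ and $X$, applied with $x := a_i$ and $y := a_j$. When $m = -1$, rewrite the hypothesis as $f_\tau(a_j \res \alpha) = a_i \res \alpha$ and apply $f_\tau$-consistency with $x := a_j$ and $y := a_i$ to obtain $f_\tau(a_j) = a_i$, i.e., $f_\tau^{-1}(a_i) = a_j$. Here one uses that both $a_i$ and $a_j$ belong to $X$, which is exactly the hypothesis under which consistency was assumed.

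Now fix $i < n$ together with distinct triples $(j_0,m_0,\tau_0)$ and $(j_1,m_1,\tau_1)$ such that for each $k < 2$ we have $j_k < i$, $m_k \in \{ -1, 1 \}$, $\tau_k \in A$, and $f_{\tau_k}^{m_k}(a_i \res \alpha) = a_{j_k} \res \alpha$. Applying the transfer principle yields $f_{\tau_k}^{m_k}(a_i) = a_{j_k}$ for each $k < 2$, and these equations are witnessed by the very same pair of triples, which remains distinct. Since $\{ f_\xi : \xi \in A \}$ is $\rho$-separated on $\vec a$, which lists elements of $T_\beta$, and $j_0, j_1 < i$, we conclude that $j_0 = j_1$ and $\rho(\tau_0,\tau_1) \ge \beta$. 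As $\alpha < \beta$, this gives $\rho(\tau_0,\tau_1) \ge \alpha$, which is precisely what $\rho$-separation of $(a_0 \res \alpha,\ldots,a_{n-1} \res \alpha)$ demands.

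The argument is essentially bookkeeping, so I do not expect a genuine obstacle; the only place that calls for a little care is the inverse case $m = -1$ of the transfer principle, where one must swap the roles of the two elements before invoking consistency, together with the observation that distinctness of the triples is automatically preserved because the triples themselves are unchanged under the transfer.
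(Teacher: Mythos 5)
Your proposal is correct and follows essentially the same path as the paper's proof: take a witnessing tuple for $\rho$-separation on $X$, note that its drop-down to level $\alpha$ is an injective enumeration of $X \res \alpha$, transfer any relation among drop-downs to a relation among the original points via $f_\tau$-consistency, and then read off $j_0 = j_1$ and $\rho(\tau_0,\tau_1) \ge \beta > \alpha$ from the $\rho$-separation on $X$. The paper compresses the transfer step into the single phrase ``By consistency,'' while you spell out the $m = -1$ case by swapping the roles of the two points before invoking consistency; that is a harmless elaboration of the same argument.
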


\begin{proof}
	Let $\vec a = (a_0,\ldots,a_{n-1})$ be an injective tuple which lists $X$ 
	so that $\{ f_\xi : \xi \in A \}$ is $\rho$-separated $\vec a$. 
	By unique drop-downs, 
	$\vec a \res \alpha = (a_0 \res \alpha,\ldots,a_{n-1} \res \alpha)$ 
	is an injective tuple which lists the elements of $X \res \alpha$. 
	We claim that $\{ f_\xi : \xi \in A \}$ is $\rho$-separated on $\vec a \res \alpha$. 
	Suppose that $i < n$ and $(j_0,m_0,\tau_0)$ and $(j_1,m_1,\tau_1)$ 
	are distinct triples satisfying that for each $k < 2$, 
	$j_k < i$, $m_k \in \{ -1, 1 \}$, $\tau_k \in A$, 
	and $f_{\tau_k}^{m_k}(a_i \res \alpha) = a_{j_k} \res \alpha$. 
	By consistency, $f_{\tau_k}^{m_k}(a_i) = a_{j_k}$. 
	As $\{ f_\xi : \xi \in A \}$ is $\rho$-separated on $\vec a$, 
	$j_0 = j_1$ and $\rho(\tau_0,\tau_1) \ge \beta > \alpha$.
\end{proof}

\begin{lemma}
	Let $T$ be a standard finite tree and let 
	$\{ f_\xi : \xi \in A \}$ be a finite set of standard functions on $T$. 
	Suppose that $U$ is a simple extension of $T$. 
	For each $\tau \in A$, let $g_\tau$ be the downward closure of $f_\tau$ in $U$. 
	Assume that $\alpha \in \h[U] \setminus \h[T]$ is less than $\max(\h[T])$, 
	$\beta$ is the least element of $\h[T]$ above $\alpha$, and 
	$\{ f_\xi : \xi \in A \}$ is $\rho$-separated on $T_\beta$. 
	Then $\{ g_\tau : \tau \in A \}$ is $\rho$-separated on $U_\alpha$. 
	In particular, if $\{ f_\xi : \xi \in A \}$ is $\rho$-separated 
	on $T_\beta$ for all $\beta \in \h[T]$, then $\{ g_\tau : \tau \in A \}$ 
	is $\rho$-separated for all $\beta \in \h[U]$.
\end{lemma}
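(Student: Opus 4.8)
The plan is to reduce the desired $\rho$-separation of $\{g_\tau : \tau \in A\}$ at level $\alpha$ to the assumed $\rho$-separation of $\{f_\xi : \xi \in A\}$ at level $\beta$, by transporting relations along the drop-down map $x \mapsto x \res \alpha$ and then dealing separately, via Lemma 4.11, with the ``new'' points of $U_\alpha$ — those having no successor at level $\beta$.

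First I would partition $U_\alpha$ as $X \cup Y$, where $X = U_\beta \res \alpha$ and $Y = U_\alpha \setminus X$. Since $U$ is a simple extension of $T$ and $\beta$ is the least element of $\h[T]$ above $\alpha$, Definition 2.2 (and the remark following it) gives that $U_\beta = T_\beta$ has unique drop-downs to $\alpha$, so $x \mapsto x \res \alpha$ is a bijection of $U_\beta$ onto $X$. The first real step is to check that $Y$ is disjoint from the domain and the range of every $g_\tau$. Every point of $U_\alpha$ lies in $U \setminus T$ (its height $\alpha$ is not in $\h[T]$), so any $g_\tau$-relation involving a point $c \in U_\alpha$ lies in $g_\tau \setminus f_\tau$; unwinding the definition of the downward closure of $f_\tau$ produces some $w \in \dom(f_\tau) \subseteq T$ with $c <_U w$, or with $c <_U f_\tau(w)$, and since $\h(w) = \h(f_\tau(w)) \in \h[T]$ exceeds $\alpha$ and hence is $\ge \beta$, the appropriate restriction to level $\beta$ (namely $w \res_U \beta$ or $f_\tau(w) \res_U \beta$) is a point of $U_\beta$ lying $<_U$-above $c$, contradicting $c \in Y$.

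Next I would show $\{g_\tau : \tau \in A\}$ is $\rho$-separated on $X$. Using the hypothesis, fix an injective tuple $\vec a = (a_0,\ldots,a_{n-1})$ listing $U_\beta = T_\beta$ on which $\{f_\xi : \xi \in A\}$ is $\rho$-separated, and consider the injective tuple $\vec a \res \alpha = (a_0 \res \alpha, \ldots, a_{n-1} \res \alpha)$ listing $X$. If $i < n$ and $(j_0,m_0,\tau_0)$, $(j_1,m_1,\tau_1)$ are distinct triples with $j_k < i$ and $g_{\tau_k}^{m_k}(a_i \res \alpha) = a_{j_k} \res \alpha$ for $k < 2$, then the third bullet of Lemma 3.8 (in the form: for $x,y \in U_\beta$, $g_\tau(x) = y$ iff $g_\tau(x \res \alpha) = y \res \alpha$) together with $g_\tau \res T = f_\tau$ upgrades these to $f_{\tau_k}^{m_k}(a_i) = a_{j_k}$; $\rho$-separation of $\{f_\xi : \xi \in A\}$ on $\vec a$ then gives $j_0 = j_1$ and $\rho(\tau_0,\tau_1) \ge \beta > \alpha$, which is what is needed at level $\alpha$. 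Thus $\{g_\tau : \tau \in A\}$ is $\rho$-separated on $\vec a \res \alpha$, i.e., on $X$. Since each $g_\tau$ is a standard function on $U$ by Lemma 3.8, Lemma 4.11 applied to $U$, $\{g_\tau : \tau \in A\}$, $X$, and $Y$ yields $\rho$-separation on $X \cup Y = U_\alpha$, which is the main assertion. For the ``in particular'' clause I would argue by cases on $\beta \in \h[U]$: if $\beta \in \h[T]$ then $U_\beta = T_\beta$ and the $g_\tau$- and $f_\tau$-relations on it coincide, so the hypothesis transfers verbatim; if $\beta \in \h[U] \setminus \h[T]$ with $\beta < \max(\h[T])$ this is exactly the main assertion; and if $\beta \in \h[U] \setminus \h[T]$ with $\beta > \max(\h[T])$ then any pair in a $g_\tau$ has both coordinates of height $\le \max(\h[T])$, so there are no relations at level $\beta$ and $\rho$-separation there is vacuous.

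The hard part will be the disjointness claim for $Y$: one must see precisely why a level-$\alpha$ point of $U$ that is not the drop-down of a level-$\beta$ point can participate in no relation of any $g_\tau$. This rests on the fact that $f_\tau$, together with its domain and range, lives entirely inside $T$, so any $g_\tau$-relation ``witnessed'' strictly above level $\alpha$ forces the existence of a level-$\beta$ extension of the point in question. Everything else is routine bookkeeping with the drop-down map, Lemma 3.8, and Lemma 4.11.
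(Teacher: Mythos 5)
Your proof is correct and follows essentially the same route as the paper's one-line proof, which simply cites Lemmas 4.3, 4.11, and 4.12 (Downward Persistence): you decompose $U_\alpha$ into $X = U_\beta \res \alpha$ and $Y$, obtain $\rho$-separation on $X$ by transporting relations up to level $\beta$ via $g_\tau$-consistency (your inline use of Lemma 3.8's third bullet is exactly the content of Lemma 4.3, and your transport argument reproves Lemma 4.12), and then apply Lemma 4.11. The only real addition is your explicit verification that $Y$ is disjoint from the domains and ranges of the $g_\tau$'s — a fact the paper leaves implicit but which is indeed needed to invoke Lemma 4.11 — and your explicit case split for the ``in particular'' clause; both are handled correctly.
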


\begin{proof}
	Immediate from Lemmas 4.2(2), 4.7(2), and 4.8 (Downward Persistence).
\end{proof}

\begin{proposition}[Characterization of $\rho$-Separation]
	Let $T$ be a standard finite tree and let $\{ f_\xi : \xi \in A \}$ be a finite 
	indexed family of standard functions on $T$. 
	Let $\alpha \in \h[T]$ and let $X \subseteq T_\alpha$. 
	Then $\{ f_\xi : \xi \in A \}$ is $\rho$-separated on $X$ if and only if:
	\begin{enumerate}
		\item for all $x, y \in X$, if $(m_0,\tau_0)$ and $(m_1,\tau_1)$ 
		are distinct pairs, where for each $k < 2$, 
		$m_k \in \{ -1, 1 \}$, 
		$\tau_k \in A$, and $f_{\tau_k}^{m_k}(x) = y$, 
		then $\rho(\tau_0,\tau_1) \ge \alpha$;
		\item there does not exist a \emph{loop with respect to $\{ f_\xi : \xi \in A \}$}, 
		by which we mean a sequence 
		$\langle c_0,\ldots,c_{p-1} \rangle$ such that 
		$p \ge 4$, $\langle c_0,\ldots,c_{p-2} \rangle$ is injective, 
		$c_0 = c_{p-1}$, and for all $i < p-1$ there exists some 
		$\tau \in A$ and $m \in \{ -1, 1 \}$ such that $f_\tau^m(c_i) = c_{i+1}$.
	\end{enumerate}
\end{proposition}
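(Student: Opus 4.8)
The plan is to recast $\rho$-separation as a statement about a graph. Given $T$, $\{f_\xi : \xi \in A\}$, $\alpha$, and $X \subseteq T_\alpha$ as in the statement, let $G = G(X)$ be the \emph{relation graph}: the simple graph on vertex set $X$ in which distinct $x,y \in X$ are adjacent exactly when $f_\tau^m(x) = y$ for some $\tau \in A$ and $m \in \{-1,1\}$. This is well-defined and symmetric since $f_\tau^{-1}$ is again a relevant map, and there are no self-loops because $\alpha > 0$ forces every element of $X$ to be nonzero while standard functions have no nonzero fixed points. Throughout I read the vertices of a loop as elements of $X$, as the context of the proposition demands. Two translations make the proposition almost immediate. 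First, for a loop $\langle c_0,\ldots,c_{p-1}\rangle$, the sequence $c_0,\ldots,c_{p-2}$ lists $p-1 \ge 3$ distinct vertices that form a cycle in $G$ (consecutive pairs, including $\{c_{p-2},c_0\}$, are edges); conversely, any cycle of $G$, upon choosing for each edge a witnessing relation of the correct orientation, is a loop. So condition (2) holds if and only if $G$ is a forest. Second, for an injective listing $\vec a = (a_0,\ldots,a_{n-1})$ of $X$, the "$j_0 = j_1$" clause in the definition of $\rho$-separation on $\vec a$ says exactly that each $a_i$ has at most one $G$-neighbor among $a_0,\ldots,a_{i-1}$, while the "$\rho(\tau_0,\tau_1) \ge \alpha$" clause — applied with $x = a_i$ and $y = a_{j_0} = a_{j_1}$ — is precisely an instance of condition (1). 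Hence, granting condition (1), a listing $\vec a$ witnesses $\rho$-separation on $X$ if and only if each $a_i$ has at most one $G$-neighbor among its predecessors.

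For the forward direction, suppose $\{f_\xi : \xi \in A\}$ is $\rho$-separated on $X$ via a tuple $\vec a$. Condition (1): given $x,y \in X$ and distinct pairs $(m_0,\tau_0),(m_1,\tau_1)$ with $f_{\tau_k}^{m_k}(x) = y$, note $x \ne y$ (as $\alpha > 0$), write $x = a_i$ and $y = a_j$, and interchanging $x$ and $y$ (and the $m_k$'s) if needed assume $j < i$; then $(j,m_0,\tau_0)$ and $(j,m_1,\tau_1)$ are distinct triples meeting the hypothesis of $\rho$-separation at index $i$, so $\rho(\tau_0,\tau_1) \ge \alpha$. Condition (2): if there were a loop $\langle c_0,\ldots,c_{p-1}\rangle$, regard $c_0,\ldots,c_{p-2}$ as a cycle and let $c_{i^*}$ be the one of these vertices occurring latest in $\vec a$. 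Its cyclic predecessor $c^-$ and successor $c^+$ are distinct (the cycle has $\ge 3$ vertices) and both occur before $c_{i^*}$ in $\vec a$ by maximality. Reversing the loop relation into $c_{i^*}$ and taking the loop relation out of $c_{i^*}$ yields two triples, one with first coordinate the position of $c^-$ in $\vec a$ and one with first coordinate the position of $c^+$; these triples are distinct (since $c^- \ne c^+$) and both have first coordinate below the position of $c_{i^*}$, so $\rho$-separation forces the positions of $c^-$ and $c^+$ to coincide — a contradiction.

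For the reverse direction, assume (1) and (2). By the first translation, $G$ is a forest. Pick a root in each connected component of $G$ and list the vertices of $X$ so that within each component every vertex follows its parent — for instance, concatenate the vertex-sequences produced by a breadth-first search from each root. In the resulting injective tuple $\vec a$, every $a_i$ has at most one $G$-neighbor among $a_0,\ldots,a_{i-1}$, namely its parent if $a_i$ is not a root of its component. By the second translation together with condition (1), $\vec a$ witnesses that $\{f_\xi : \xi \in A\}$ is $\rho$-separated on $X$. Concretely: if $(j_0,m_0,\tau_0)$ and $(j_1,m_1,\tau_1)$ are distinct triples with $j_k < i$ and $f_{\tau_k}^{m_k}(a_i) = a_{j_k}$, then $a_{j_0}$ and $a_{j_1}$ are both $G$-neighbors of $a_i$ among its predecessors, hence equal, giving $j_0 = j_1 =: j$; then $(m_0,\tau_0) \ne (m_1,\tau_1)$, so condition (1) with $x = a_i$ and $y = a_j$ gives $\rho(\tau_0,\tau_1) \ge \alpha$.

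I expect the proof to be short once the two translations are in place, so the only real work is establishing them carefully — in particular the loop/cycle correspondence and its respect for orientations of the relations, and the standard graph-theoretic fact that a forest is exactly a graph whose vertices can be listed with each having at most one earlier neighbor. The one slightly clever step is the choice, in the forward direction, of the loop vertex occurring latest in the witnessing tuple; after that the contradiction is forced directly by the definition of $\rho$-separation. No properties of $\rho$ itself are used beyond those already built into the statements of conditions (1) and (2); the argument is purely combinatorial.
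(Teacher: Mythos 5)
Your proof is correct and its core ideas coincide with the paper's, but you package them differently by introducing the relation graph $G$ on $X$ and reducing everything to elementary graph theory. The paper's forward direction for (2) is literally what you do: shift the loop so that the element occurring latest in the witnessing tuple is at position $0$, then observe its two cycle-neighbors are distinct and both earlier, violating $\rho$-separation. For the reverse direction, the paper builds an ordering by "segments" (which are exactly the connected components of $G$, though it never says so), adding each new element when it is $G$-adjacent to something already listed, and records a back-path to the segment's first element (property (b)). It then has to re-derive, by an explicit merge-the-two-back-paths-into-a-loop argument, the fact that condition (2) forces $j_0 = j_1$. Your version instead invokes (2) up front to conclude $G$ is a forest and then picks a rooted search order, so "$\le 1$ earlier neighbor" comes for free; the loop-contradiction is absorbed into the standard characterization of forests. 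Both arguments are correct; yours is more modular and shorter because it reuses a well-known combinatorial fact instead of re-proving an ad hoc instance of it, at the small cost of having to check carefully that a "loop" in the paper's sense is the same thing as a graph cycle (your observation that the $p-1 \ge 3$ distinct vertices of $c_0,\ldots,c_{p-2}$ form a genuine cycle is the crux, and it is right).
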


\begin{proof}
	Suppose that $\{ f_\xi : \xi \in A \}$ is $\rho$-separated on $X$, and let 
	$\vec a = (a_0,\ldots,a_{n-1})$ be an injective tuple which lists $X$ 
	such that $\{ f_\xi : \xi \in A \}$ is separated on $\vec a$. 
	We prove that (1) and (2) hold.

	(1) Suppose that $x, y \in X$ 
	and $(m_0,\tau_0)$ and $(m_1,\tau_1)$ are distinct pairs 
	satisfying that for each $k < 2$, 
	$m_k \in \{ -1, 1 \}$, 
	$\tau_k \in A$, and $f_{\tau_k}^{m_k}(x) = y$. 
	We prove that $\rho(\tau_0,\tau_1) \ge \alpha$. 
	Fix $j$ and $i$ less than $n$ such that $x = a_i$ and $y = a_j$. 
	By replacing $m_0$ and $m_1$ by $-m_0$ and $-m_1$ if necessary, 
	we may assume without loss of generality that $j < i$. 
	Since the triples $(j,m_0,\tau_0)$ and $(j,m_1,\tau_1)$ are distinct, 
	it follows by $\rho$-separation that $\rho(\tau_0,\tau_1) \ge \alpha$.

	(2) Suppose for a contradiction that there exists a sequence 
	$\langle c_0,\ldots,c_{p-1} \rangle$ such that 
	$p \ge 4$, $\langle c_0,\ldots,c_{p-2} \rangle$ is injective, 
	$c_0 = c_{p-1}$, and for all $i < p-1$ there exists some 
	$\tau_i \in A$ and $m_i \in \{ -1, 1 \}$ such that $f_{\tau_i}^{m_i}(c_i) = c_{i+1}$. 
	For each $i < p$ fix $j_i < n$ such that $c_i = a_{j_i}$. 
	By shifting the sequence if necessary, we may assume without loss of generality 
	that $j_{0} = \max \{ j_i : i < p-1 \}$. 
	Let $j^* = j_0$.  
	Note that $j^* = j_{p-1}$, $j_1 < j^*$, and $j_{p-2} < j^*$.  
	Then $f_{\tau_{0}}^{m_0}(a_{j^*}) = a_{j_1}$ and 
	$f_{\tau_{p-2}}^{-m_{p-2}}(a_{j^*}) = a_{j_{p-2}}$. 
	But the triples $(j_1,m_0,\tau_0)$ and $(j_{p-2},-m_{p-2},\tau_{p-2})$ 
	are distinct and $j_1 \ne j_{p-2}$, which contradicts $\rho$-separation.

	Conversely, assume that (1) and (2) hold and we prove that 
	$\{ f_\xi : \xi \in A \}$ is $\rho$-separated on $X$. 
	We build by induction an injective tuple 
	$\vec a = (a_0,\ldots,a_{n-1})$ which lists the elements of $X$ so that 
	$\{ f_\xi : \xi \in A \}$ is $\rho$-separated on $\vec a$. 
	This tuple splits into consecutive segments satisfying:
	\begin{enumerate}
	\item[(a)] if $c$ and $d$ are in $X$ and are in distinct segments of $\vec a$, 
	then there does not exist $(m,\tau) \in \{ -1, 1 \} \times A$ 
	such that $f_\tau^m(c) = d$;
	\item[(b)] for each member $b$ of a segment different from 
	the first element $a$ of that segment, 
	there exists a finite sequence $\langle c_0,\ldots,c_{p} \rangle$ such that 
	$c_{0} = b$, $c_{p} = a$, 
	and for all $i < p$, $c_{i+1}$ appears earlier in the segment than 
	$c_{i}$ does and for some $(m, \tau) \in \{ -1, 1 \} \times A$, 
	$f_\tau^m(c_i) = c_{i+1}$. 
	\end{enumerate}

	For the first member of the first segment, let $a_0$ be an arbitrary member of $X$. 
	Assuming that $(a_0,\ldots,a_k)$ has been defined and is part of the first segment, 
	let $a_{k+1}$ be any element of $X \setminus \{ a_0,\ldots,a_k \}$ satisfying that 
	there exist $i \le k$, $\tau \in A$, and $m \in \{-1,1\}$ 
	such that $f_\tau^m(a_{k+1}) = a_i$. 
	Note that (b) holds for $a_{k+1}$ assuming that it holds for $a_i$. 
	If there does not exist such an element $a_{k+1}$, 
	then we move on to the next segment using the same instructions as above, by 
	picking the first element of the next segment arbitrarily and successively choosing 
	new elements related to earlier members of the segment. 
	We continue in this manner defining a sequence of segments, 
	and stop once we have listed all of the members of $X$. 
	This completes the definition of $\vec a = (a_0,\ldots,a_{n-1})$, and 
	properties (a) and (b) clearly hold.

	Let us prove that $\{ f_\xi : \xi \in A \}$ is $\rho$-separated on $\vec a$. 
	Consider $i < n$, and suppose that 
	$(j_0,m_0,\tau_0)$ and $(j_1,m_1,\tau_1)$ are distinct triples 
	satisfying that for each $k < 2$, $j_k < i$, $m_k \in \{ -1, 1 \}$, 
	$\tau_k \in A$, and $f_{\tau_k}^{m_k}(a_i) = a_{j_k}$. 
	We prove that $j_0 = j_1$ and $\rho(\tau_0,\tau_1) \ge \alpha$.

	By (a), $a_i$, $a_{j_0}$, and $a_{j_1}$ must belong to the same segment of $\vec a$. 
	Let $d$ be the first member of this segment. 
	We claim that $j_0 = j_1$. 
	Suppose not. 
	For each $k < 2$, let $\vec c^k = (c^k_0,\ldots,c^k_{p^k})$ 
	be a sequence as described in (b) satisfying that 
	$c^k_0 = a_{j_k}$ and $c^k_{p^k} = d$. 
	Since these sequences both end at $d$, we can fix 
	fix $1 \le q^0 \le p^0$ and $1 \le q^1 \le p^1$ such that 
	$c^0_{q^0} = c^1_{q^1}$ but for all $r < q^0$ and $s < q^1$, $c^0_r \ne c^1_s$. 
	Now the sequence resulting from the concatenation of the four sequences 
	$\langle a_i \rangle$, 
	$\langle c^0_0,\ldots,c^0_{q^0} \rangle$, the reverse of 
	$\langle c^1_0,\ldots,c^1_{q^1-1} \rangle$, and $\langle a_i \rangle$ 
	is a loop, contradicting (2).

	So indeed $j_0 = j_1$. 
	Let $j^* = j_0$. 
	Then the pairs $(m_0,\tau_0)$ and $(m_1,\tau_1)$ are distinct 
	and for each $k < 2$, $f_{\tau_k}^{m_k}(a_i) = a_{j}$. 
	By (1), $\rho(\tau_0,\tau_1) \ge \alpha$.
	\end{proof}

The proof of the following is very similar to the proof of 
\cite[Proposition 5.12]{KS}.

\begin{proposition}[$1$-Key Property]
	Let $T$ be a standard finite tree which is normal 
	and let $\{ f_\xi : \xi \in A \}$ be a finite 
	indexed family of standard functions on $T$. 
	Assume:
	\begin{itemize}
	\item $\alpha < \beta$ are in $\h[T]$; 
	\item $X \subseteq T_\alpha$;
	\item $\{ f_\xi : \xi \in A \}$ is separated on $X$;
	\item whenever $f_\tau(x) = y$ holds, where 
	$x, y \in X$ and $\tau \in A$, 
	then $\Succ_T(x) \subseteq \dom(f_\tau)$ and $\Succ_T(y) \subseteq \ran(f_\tau)$.
	\end{itemize}
	Then for all $b \in T_\beta$ such that $b \res \alpha \in X$, 
	there exists a set $Y \subseteq T_\beta$ with unique drop-downs to $\alpha$ 
	such that $Y \res \alpha = X$, $b \in Y$, and for all $\tau \in A$, 
	$X$ and $Y$ are $f_\tau$-consistent.
\end{proposition}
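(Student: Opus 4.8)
The plan is to build the set $Y$ one level at a time by lifting $X$ up through the levels of $T$ from $\alpha$ to $\beta$, at each stage choosing the "$b$-branch" to pass through the appropriate node and choosing all the other members of $Y$ at the new level so that $f_\tau$-consistency is preserved for every $\tau \in A$. More precisely, fix an injective tuple $\vec a = (a_0,\ldots,a_{n-1})$ listing $X$ on which $\{f_\xi : \xi \in A\}$ is separated, and let $\alpha = \gamma_0 < \gamma_1 < \cdots < \gamma_k = \beta$ enumerate $\h[T] \cap [\alpha,\beta]$. I will define sets $X = X^0 \subseteq T_{\gamma_0}$, then $X^1 \subseteq T_{\gamma_1}$, \ldots, $X^k = Y \subseteq T_{\gamma_k}$, maintaining inductively that $X^\ell$ has unique drop-downs to $\alpha$, that $X^\ell \res \alpha = X$, that $b \res \gamma_\ell \in X^\ell$, and that $X$ and $X^\ell$ are $f_\tau$-consistent for all $\tau \in A$. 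Since consistency between $X$ and $X^\ell$ together with $X$ and $X^{\ell+1}$ being consistent will follow from a local argument at the single step $\gamma_\ell \to \gamma_{\ell+1}$, it suffices to do one level at a time.

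For the successor step, suppose $X^\ell \subseteq T_{\gamma_\ell}$ has been constructed with the desired properties; write $\gamma = \gamma_\ell$ and $\delta = \gamma_{\ell+1}$. For each $x \in X^\ell$, since $T$ is normal, $x$ has at least one successor at level $\delta$; I must pick exactly one such successor $\hat x$ to be a member of $X^{\ell+1}$, with the constraint that if $b \res \gamma \in X^\ell$ then I take $\widehat{b \res \gamma} = b \res \delta$. The choices must be coordinated so that whenever $f_\tau(a_i \res \gamma) = a_j \res \gamma$ (equivalently $f_\tau(a_i) = a_j$, by consistency), we have $f_\tau(\hat{a}_i) = \hat{a}_j$; the hypothesis that $\Succ_T(x) \subseteq \dom(f_\tau)$ and $\Succ_T(y) \subseteq \ran(f_\tau)$ whenever $f_\tau(x) = y$ with $x,y \in X$ guarantees that $f_\tau(\hat a_i)$ is defined at level $\delta$ and that $\hat a_j$ can legitimately be chosen equal to it (since $f_\tau$, being injective, level-preserving and strictly increasing, sends the successors of $a_i$ bijectively onto successors of $a_j$). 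The way to organize the choices is exactly the segment decomposition from the proof of the Characterization of $\rho$-Separation: within each segment of $\vec a$, once $\hat a$ is chosen for the first element $a$, the choices for all other members of the segment are forced along the witnessing chains from property (b); separation (property (a), plus Lemma 4.9, the uniqueness of relations) ensures there is no conflict, i.e.\ the forced value is well-defined regardless of which chain we follow. For the segment containing $b \res \gamma$ (if any), we must start the chain from $b \res \gamma$ itself rather than from the segment's first element; separation again guarantees this is consistent. For any $x \in X^\ell$ not in the domain or range of any $f_\tau$ restricted to $X^\ell$, pick $\hat x$ arbitrarily among its successors.

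Having made these choices, set $X^{\ell+1} = \{\hat x : x \in X^\ell\}$. Then $X^{\ell+1} \res \gamma = X^\ell$, so by the inductive hypothesis and the fact that drop-downs compose, $X^{\ell+1} \res \alpha = X$ and $X^{\ell+1}$ has unique drop-downs to $\alpha$; also $b \res \delta \in X^{\ell+1}$. For $f_\tau$-consistency between $X$ and $X^{\ell+1}$: by Lemma 4.2 it is enough to check consistency between $X$ and $X^{\ell+1}$, and by construction $f_\tau(\hat a_i) = \hat a_j$ holds exactly when $f_\tau(a_i) = a_j$ holds, which is the required equivalence (the forward direction is automatic since $f_\tau$ is strictly increasing, and the reverse direction is precisely what the coordinated choice secured). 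This completes the induction, and $Y = X^k$ is as desired. The main obstacle is verifying that the forced choices along the witnessing chains in each segment are genuinely well-defined — that two different chains leading to the same node of $X^\ell$ produce the same successor — which is where separation (and the no-loops condition implicit in it, via the Characterization) is essential; but this is handled exactly as in the proof of the Characterization of $\rho$-Separation and in \cite[Proposition 5.12]{KS}.
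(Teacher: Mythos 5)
Your approach is broadly sound and the key mechanism is correct: use the hypothesis on $\Succ_T$ to know that forced lifts $f_\tau(\hat a_i)$ exist, use separation to know that each element has at most one forced constraint, and organize the choices so that the branch through $b$ is respected. But you take a genuinely different (and somewhat heavier) route than the paper, and a couple of citations and appeals are off.

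The paper does the lift from $\alpha$ to $\beta$ in a single step, not level by level. It fixes a separated enumeration $\vec a = (a_0,\ldots,a_{n-1})$ of $X$ with $a_{\bar n} = b \res \alpha$, first builds a finite descending chain $\bar n = i_0 > i_1 > \cdots > i_{l-1}$ following the unique earlier relations and precomputes the corresponding chain $c_0 = b, c_1, \ldots, c_{l-1}$ at level $\beta$, and then does a single induction on $k < n$: if $a_k$ has a (necessarily unique, by separation) relation $f_\sigma^m(a_k) = a_j$ with $j < k$, set $b_k = f_\sigma^{-m}(b_j)$, and otherwise choose $b_k$ freely unless $k = i_{l-1}$, in which case set $b_k = c_{l-1}$; an extra inductive clause records that whenever $k = i_m$ then $b_k = c_m$, which is what eventually gives $b_{\bar n} = b$. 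There is no well-definedness issue to settle because at each step the induction forces at most one value. Your segment-plus-chain-propagation scheme has to argue that propagating along different witnessing chains gives the same answer; this is true, but the reason is again separation (each element has at most one earlier-related neighbor, so the chain is unique), not Lemma 4.9, which is the low-$\rho$ equivalence of separation and $\rho$-separation and is irrelevant here. Likewise, the proof of Proposition 4.14 (Characterization) does not address this well-definedness at all --- it constructs segments in order to produce a separated enumeration, whereas here you already have one --- so appealing to it for that step is misleading. The level-by-level structure also brings extra bookkeeping you do not really acknowledge: you need separation on each intermediate $X^\ell$ (via Lemma 4.7 plus the observation that separation passes to subsets), and you need the $\Succ_T$ hypothesis to propagate to elements of $X^\ell$ (which it does, since $\Succ_T(x) \subseteq \Succ_T(x\res\alpha)$). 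All of this can be patched, but the one-shot induction in the paper is cleaner precisely because it packages the well-definedness and the $b$-constraint into the ordering of $\vec a$, rather than making you verify chain-independence and intermediate invariants.

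Two smaller points: the notation $a_i \res \gamma$ is used for the element of $X^\ell$ \emph{above} $a_i$, which is the opposite of what $\res$ means in the paper; and the sentence ``by Lemma 4.2 it is enough to check consistency between $X$ and $X^{\ell+1}$'' is a non-sequitur, since Lemma 4.2 propagates consistency downward, not upward --- what you actually use there is just the inductive hypothesis that $X$ and $X^\ell$ are $f_\tau$-consistent composed with the consistency you build between $X^\ell$ and $X^{\ell+1}$.
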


\begin{proof}
	Let $\vec a = (a_0,\ldots,a_{n-1})$ be an injective tuple which lists the elements 
	of $X$ in such a way that $\{ f_\xi : \xi \in A \}$ is separated on $\vec a$. 
	Fix $\bar{n} < n$ such that $b \res \alpha = a_{\bar n}$.
	
	We claim that there exists a sequence 
	$$
	\langle i_0,(i_1,m_1,\tau_1),\ldots,(i_{l-1},m_{l-1},\tau_{l-1}) \rangle, 
	$$
	for some $l \le \bar{n}+1$, such that:
	\begin{enumerate}
	\item $\bar{n} = i_0 > i_1 > \cdots > i_{l-1} \ge 0$;
	\item for all $0 < k < l$, $\tau_k \in A$, $m_k \in \{ -1, 1 \}$, and 
	$f_{\tau_k}^{m_k}(a_{i_{k-1}}) = a_{i_{k}}$; 
	\item there does not exist a triple $(i,m,\tau)$ such that 
	$i < i_{l-1}$, $m \in \{ -1, 1 \}$, 
	$\tau \in A$, and $f_\tau^m(a_{i_{l-1}}) = a_i$.
	\end{enumerate}

	We construct the desired sequence by induction. 
	Let $i_{0} = \bar{n}$. 
	Now let $k \ge 0$ and assume that we have defined 
	$\langle i_0,(i_1,m_1,\tau_1),\ldots,(i_{k},m_{k},\tau_{k}) \rangle$ 
	as described in (1) and (2). 
	If there does not exist a triple $(i,m,\tau)$ such that $i < i_k$, $m \in \{ -1, 1 \}$, 
	$\tau \in A$, and $f_\tau^m(a_{i_k}) = a_i$, 
	then let $l = k+1$ and we are done. 
	Otherwise, fix such a triple $(i,m,\tau)$ and let 
	$i_{k+1} = i$, $m_{k+1} = m$, and $\tau_{k+1} = \tau$. 
	This completes the construction. 
	Note that (1) implies that $l \le \bar{n}+1$.

	Define a tuple $(c_0,\ldots,c_{l-1})$ by induction as follows, maintaining that 
	for all $k < l$, $a_{i_k} <_T c_k$. 
	Let $c_0 = b$. 
	Then $a_{i_0} = a_{\bar n} <_T b = c_0$. 
	Suppose that $0 < k < l$ and $c_{k-1}$ is defined so that 
	$a_{i_{k-1}} <_T c_{k-1}$. 
	By (2), $f_{\tau_{k}}^{m_k}(a_{i_{k-1}}) = a_{i_k}$. 
	By the last bullet point in the assumptions of the proposition, 
	$c_{k-1}$ is in the domain of $f_{\tau_k}^{m_k}$. 
	Define $c_k = f_{\tau_k}^{m_k}(c_{k-1})$. 
	Since $f_{\tau_k}^{m_k}$ is strictly increasing, $a_{i_k} <_T c_k$. 
	This completes the definition of $(c_0,\ldots,c_{l-1})$.

	We now construct the set $Y$ as described in the conclusion of the proposition. 
	By induction on $i < n$ we choose $b_i$ in $T_\beta$ above $a_i$, 
	and let $Y = \{ b_0,\ldots,b_{n-1} \}$. 
	We maintain that for all $k < n$:
	\begin{enumerate}
	\item[(a)] for all $\tau \in A$, 
	$(a_0,\ldots,a_{k})$ and $(b_0,\ldots,b_{k})$ are $f_\tau$-consistent;
	\item[(b)] for all $m < l$, if $i_m \le k$ then $b_{i_m} = c_m$.
	\end{enumerate} 
	Assuming that we are able to define $(b_0,\ldots,b_{n-1})$ 
	with these properties, then for all $\tau \in A$, 
	$(a_0,\ldots,a_{n-1})$ and $(b_0,\ldots,b_{n-1})$ are $f_\tau$-consistent, 
	and $b_{\bar{n}} = b_{i_0} = c_0 = b$, which completes the proof.

	For the base case, if $0 \in \{ i_0, \ldots, i_{l-1} \}$, 
	then clearly $0 = i_{l-1}$, so in this case we let $b_0 = b_{i_{l-1}} = c_{l-1}$. 
	Otherwise, let $b_0$ be an arbitrary element of $T_\beta$ above $a_0$. 
	Clearly, the inductive hypotheses are maintained.

	Now let $0 < k < n$ and assume that we have chosen $b_i$ for all 
	$i < k$ so that the tuple $(b_0,\ldots,b_{k-1})$ satisfies (a) and (b).

	Case 1:	There does not exist a triple $(j,m,\tau)$ such that 
	$j < k$, $m \in \{ -1, 1 \}$, $\tau \in A$, and 
	$f_{\tau}^m(a_k) = a_j$. 
	If $k \in \{ i_0, \ldots, i_{l-1} \}$, then clearly $k = i_{l-1}$, 
	and we let $b_{k} = b_{i_{l-1}} = c_{l-1}$. 
	So inductive hypothesis (b) holds. 
	Otherwise, choose $b_{k}$ above $a_{k}$ arbitrarily. 
	The inductive hypothesis together with the fact that 
	there are no relations between $a_k$ and members of $(a_0,\ldots,a_{k-1})$ 
	with respect to $\{ f_\xi : \xi \in A \}$  
	easily imply inductive hypothesis (a).

	Case 2:	There exists a triple $(j,m,\sigma)$ such that 
	$j < k$, $m \in \{ -1, 1 \}$, $\sigma \in A$, and $f_\sigma^m(a_k) = a_j$. 
	By separation, the triple $(j,m,\sigma)$ is unique. 
	Note that $a_k = f_\sigma^{-m}(a_j)$. 
	By the last bullet point in the assumptions of the proposition, 
	$b_j$ is in the domain of $f_\sigma^{-m}$. 
	Define $b_{k} = f_\sigma^{-m}(b_j)$. 
	Since $a_j <_T b_j$ and $f_\sigma^{-m}$ is strictly increasing, 
	$a_k <_T b_k$. 
	By the inductive hypothesis and 
	the uniqueness of the triple $(j,m,\sigma)$, 
	it easily follows that for all $\tau \in A$, 
	$(a_0,\ldots,a_{k})$ and $(b_0,\ldots,b_{k})$ 
	are $f_\tau$-consistent. 
	So (a) holds. 
	In the case that $k \in \{ i_0, \ldots, i_{l-1} \}$, 
	by the uniqueness of the triple $(j,m,\sigma)$ and the assumption of Case 2, 
	it must be the case that $k = i_{q-1}$ for some $q$ such that 
	$0 < q \le l-1$, $j = i_{q}$, $m = m_{q}$, and $\sigma = \tau_{q}$. 
	By the induction hypothesis, 
	$b_{i_{q}} = c_{q}$, and by the definitions of $b_{i_{q-1}}$ 
	and $c_{q}$, 
	$b_{i_{q-1}} = f_{\tau_{q}}^{-m_{q}}(b_{i_{q}}) = 
	f_{\tau_{q}}^{-m_{q}}(c_{q}) = f_{\tau_q}^{-m_q}(f_{\tau_q}^{m_q}(c_{q-1})) = 
	c_{q-1}$. 
	Thus, (b) holds.
\end{proof}

\section{The Forcing Poset}

We now have the tools at hand to introduce and develop our forcing poset for 
adding a Suslin tree together with $\omega_2$-many automorphisms of it.

\begin{definition}
	Let $\p$ be the forcing poset consisting of conditions 
	which are pairs $(T,F)$ satisfying:
	\begin{enumerate}
	\item $T$ is a standard finite tree;
	\item $F$ is a function whose domain is a finite subset of $\omega_2$, 
	and for all $\gamma \in \dom(F)$, 
	$F(\gamma)$ is a standard function on $T$;
	\item for all $\alpha \in \h[T]$, $F$ 
	is $\rho$-separated on $T_\alpha$.
	\end{enumerate}
	Let $(U,G) \le (T,F)$ if:
	\begin{enumerate}
	\item[(a)] $U$ extends $T$;
	\item[(b)] $\dom(F) \subseteq \dom(G)$ 
	and for all $\gamma \in \dom(F)$, $F(\gamma) \subseteq G(\gamma)$;
	\item[(c)] suppose that $\gamma$ and $\tau$ are distinct elements of $\dom(F)$, 
	$x$ is some element of $U$ in $\dom(G(\gamma)) \cap \dom(G(\tau))$, and 
	$G(\gamma)(x) = G(\tau)(x)$; then there exists some $z \in T$ 
	such that $x \le_U z$ and $F(\gamma)(z) = F(\tau)(z)$.
	\end{enumerate}
\end{definition}

In (3) above, we identify $F$ with the indexed family $\{ F(\tau) : \tau \in \dom(F) \}$ 
(see Definition 4.6).

The first component of a condition in $\p$ approximates a tree whose $\p$-name 
we write as $T^{\dot{G}_{\p}}$, 
and the second component approximates a sequence of functions on $T^{\dot{G}_{\p}}$ 
whose $\p$-names we  write as $F^{\dot{G}_{\p}}_\tau$ for all $\tau < \omega_2$. 
We prove in Section 7 that the forcing poset $\p$ is Knaster and forces that 
$T^{\dot{G}_{\p}}$ is Suslin. 

In this section, we derive some basic lemmas which we use to prove that $\p$ forces that 
$T^{\dot{G}_{\p}}$ is an $\omega$-ary tree with height $\omega_1$ and countable levels and that 
$\{ F^{\dot{G}_{\p}}_\tau : \tau < \omega_2 \}$ is a strongly almost disjoint family 
of automorphisms of $T^{\dot{G}_{\p}}$.

\begin{lemma}
	Suppose that $(T,F) \in \p$, $U$ is a standard finite tree which extends $T$, 
	and $\h[U] \cap (\max(h[T])+1) = \h[T]$. 
	Then $(U,F) \in \p$ and $(U,F) \le (T,F)$. 
\end{lemma}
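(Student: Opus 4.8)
The plan is to verify directly the three clauses in the definition of membership in $\p$ and the three clauses in the definition of $\le$. Clause (1) (that $U$ is a standard finite tree) is simply the hypothesis. For clause (2), note that $\dom(F)$ is unchanged, so it is still a finite subset of $\omega_2$; and since $U$ extends $T$ and $\h[U] \cap (\max(\h[T])+1) = \h[T]$, Lemma 3.4 applies to each $F(\gamma)$ with $\gamma \in \dom(F)$, so that every $F(\gamma)$ is a standard function on $U$. Thus $F$ is a finite indexed family of standard functions on $U$.

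The real content is clause (3): for every $\alpha \in \h[U]$ with $\alpha > 0$, $F$ is $\rho$-separated on $U_\alpha$. The key point is that every height of $U$ not already in $\h[T]$ exceeds $\max(\h[T])$, so a node of $U$ of such a height, and more generally any node of $U_\alpha \setminus \{x \in T : \h(x) = \alpha\}$, cannot belong to $T$ (a node of $T$ of height $\alpha$ would lie in $T_\alpha$). Fix $\alpha \in \h[U]$ with $\alpha > 0$, put $X = \{x \in T : \h(x) = \alpha\}$ (this is $T_\alpha$ when $\alpha \in \h[T]$ and empty otherwise) and $Y = U_\alpha \setminus X$. Then $F$ is $\rho$-separated on $X$: this is clause (3) for the condition $(T,F)$ when $\alpha \in \h[T]$, and it is vacuous when $X = \emptyset$; moreover, whether we regard the $F(\tau)$ as functions on $T$ or on $U$ is immaterial here, since $\rho$-separation on a tuple listing $X$ only refers to relations $F(\tau)^{\pm 1}(a_i) = a_j$ with $a_i,a_j \in X$, and these relations are the same set of facts in either reading. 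By the previous observation $Y$ is disjoint from $T$, hence disjoint from the domain and range of every $F(\tau)$ (both of which are subsets of $T$). Now Lemma 4.10, applied in $U$ with this $X$ and $Y$, gives that $F$ is $\rho$-separated on $X \cup Y = U_\alpha$, as needed.

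Finally I would check $(U,F) \le (T,F)$. Clause (a) is the hypothesis that $U$ extends $T$. Clause (b) holds trivially, since $\dom(F) \subseteq \dom(F)$ and $F(\gamma) \subseteq F(\gamma)$ for each $\gamma \in \dom(F)$. For clause (c), suppose $\gamma$ and $\tau$ are distinct elements of $\dom(F)$, $x \in \dom(F(\gamma)) \cap \dom(F(\tau))$, and $F(\gamma)(x) = F(\tau)(x)$; since $\dom(F(\gamma)) \subseteq T$ we have $x \in T$, so $z := x$ witnesses clause (c), because $x \le_U z$, $z \in T$, and $F(\gamma)(z) = F(\tau)(z)$. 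I do not expect a genuine obstacle; the only point requiring a little care is that $U$ is permitted to add new nodes at heights already appearing in $T$, which is precisely why clause (3) cannot just be inherited level by level and Lemma 4.10 (rather than a triviality) is invoked.
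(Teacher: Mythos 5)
Your proof is correct and follows exactly the paper's route: Lemma 3.4 to see each $F(\gamma)$ remains a standard function on $U$, then the lemma on absorbing nodes that are disjoint from all the domains and ranges to propagate $\rho$-separation level by level, and a direct check of Definition 5.1(a)--(c) for the ordering. The only slip is a numbering one: the lemma you invoke for $\rho$-separation on $X \cup Y$ is Lemma 4.11 in the paper, not 4.10 (4.10 is the definition of $\rho$-separation for sets).
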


\begin{proof}
	By Lemma 3.4, for all $\gamma \in \dom(F)$, 
	$F(\gamma)$ is a standard function on $U$. 
	By Lemma 4.7(2), for all $\alpha \in \h[U]$, $F$ is $\rho$-separated on $U_\alpha$. 
	So $(U,F) \in \p$, and it is simple to check that $(U,F) \le (T,F)$.
\end{proof}

\begin{lemma}
	Suppose that $(T,F) \in \p$ 
	and $U$ is a standard finite tree which is a simple extension of $T$. 
	Let $\bar{F}$ be the function with domain equal to $\dom(F)$ such that for all 
	$\tau \in \dom(F)$, $\bar{F}(\tau)$ is the downward closure of $F(\tau)$ in $U$. 
	Then $(U,\bar F) \in \p$ and $(U,\bar F) \le (T,F)$.
\end{lemma}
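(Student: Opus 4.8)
The plan is to verify, clause by clause, that $(U,\bar F)$ is a condition in $\p$ and then that $(U,\bar F)\le(T,F)$, using Lemmas~3.8 and~4.13 for the routine parts and reserving the real work for clause (c) of the ordering.

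For $(U,\bar F)\in\p$: clause (1) is immediate since $U$ is a standard finite tree by hypothesis. For clause (2), $\dom(\bar F)=\dom(F)$ is finite, and for each $\gamma\in\dom(F)$ the first bullet of Lemma~3.8 gives that the downward closure $\bar F(\gamma)$ of the standard function $F(\gamma)$ in the simple extension $U$ is again a standard function, now on $U$. For clause (3), I would invoke the ``in particular'' part of Lemma~4.13: since $(T,F)\in\p$, the family $\{F(\tau):\tau\in\dom(F)\}$ is $\rho$-separated on $T_\beta$ for every $\beta\in\h[T]$ greater than $0$, so $\{\bar F(\tau):\tau\in\dom(F)\}$ is $\rho$-separated on $U_\beta$ for every $\beta\in\h[U]$ greater than $0$. (Level by level: for $\beta\in\h[U]\cap\h[T]$ one has $U_\beta=T_\beta$ and, by the second bullet of Lemma~3.8, $\bar F(\tau)\res T=F(\tau)$, so the relations among the elements of $U_\beta$ with respect to the family $\{\bar F(\tau)\}$ are exactly the old ones; for $\beta\in\h[U]\setminus\h[T]$ below $\max(\h[T])$ this is Lemma~4.13; and for $\beta\in\h[U]\setminus\h[T]$ above $\max(\h[T])$ no element of $U_\beta$ lies below any element of $T$, so none meets the domain or range of any $\bar F(\tau)$ and $\rho$-separation there is vacuous.)

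For $(U,\bar F)\le(T,F)$: clause (a) holds because a simple extension is in particular an extension, and clause (b) holds because $\dom(\bar F)=\dom(F)$ and $F(\gamma)=\bar F(\gamma)\res T\subseteq\bar F(\gamma)$ by Lemma~3.8. The substance is clause (c). I would fix distinct $\gamma,\tau\in\dom(F)$ and $x\in\dom(\bar F(\gamma))\cap\dom(\bar F(\tau))$ with $\bar F(\gamma)(x)=\bar F(\tau)(x)=:y$, and split on whether $x\in T$. If $x\in T$, then $\h(x)\in\h[T]$, and since $\bar F(\gamma)$ is level preserving, $y\in U_{\h(x)}=T_{\h(x)}\subseteq T$, so $(x,y)\in\bar F(\gamma)\res T=F(\gamma)$ and likewise $(x,y)\in F(\tau)$; hence $F(\gamma)(x)=y=F(\tau)(x)$ and $z:=x$ works. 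If $x\notin T$, then $\h(x)\in\h[U]\setminus\h[T]$; unravelling the definition of downward closure I would choose $w\in\dom(F(\gamma))\subseteq T$ with $x\le_U w$ and $y\le_U F(\gamma)(w)$, and since $x\ne w$ this forces $x<_U w$, so $\alpha:=\h(x)<\h(w)\le\max(\h[T])$. Put $\beta:=\min(\h[T]\setminus(\alpha+1))$, $s:=w\res_T\beta$, and $t:=F(\gamma)(w)\res_T\beta$; since $F(\gamma)$ is strictly increasing, level preserving, and downwards closed, $t=F(\gamma)(s)$, and $s,t\in T_\beta=U_\beta$ with $s\res\alpha=x$ and $t\res\alpha=y$. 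The identical argument for $\tau$ produces $s',t'\in U_\beta$ with $s'\res\alpha=x$, $t'\res\alpha=y$, and $F(\tau)(s')=t'$. Since $U$ is a simple extension of $T$, $U_\beta$ has unique drop-downs to $\alpha$, so $s=s'$ and $t=t'$; hence $F(\gamma)(s)=t=F(\tau)(s)$, and as $z:=s\in T$ and $x<_U s$, clause (c) holds.

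I expect the $x\notin T$ case of clause (c) to be the main obstacle: one must transfer the equation $\bar F(\gamma)(x)=\bar F(\tau)(x)$ from the newly added level $\alpha$ up to the nearest old level $\beta$ above it and then use the unique drop-downs of $U_\beta=T_\beta$ to $\alpha$ to conclude that the two lifts $s$ and $s'$ coincide. A perhaps cleaner way to package this transfer is to quote the third bullet of Lemma~3.8 directly, which already encodes the correspondence between relations at level $\alpha$ and relations at level $\beta$. Everything else is bookkeeping with the definitions and with Lemmas~3.8 and~4.13.
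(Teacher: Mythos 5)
Your proof is correct and takes the same approach as the paper, which simply cites Lemmas 3.8 and 4.13 for the condition requirements and dismisses clause (c) of the ordering as ``easy to check.'' You supply the detail the paper omits: the case split on whether $x$ lies in $T$, the unraveling of the downward closure to find $w \in \dom(F(\gamma))$ above $x$, and the use of unique drop-downs from $U_\beta = T_\beta$ to $\alpha$ (guaranteed by the definition of simple extension) to identify the lifts $s$ and $s'$. As you note yourself, the third bullet of Lemma 3.8 packages that transfer in one line, which is almost certainly the intended ``easy to check''; your more explicit version is an equivalent unwinding of it.
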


\begin{proof}
	For proving that $(U,\bar F) \in \p$, we know that $U$ is a standard 
	finite tree by assumption, for all $\tau \in \dom(\bar{F})$, 
	$\bar{F}(\tau)$ is a standard function on $U$ by Lemma 3.8, and 
	for all $\alpha \in \h[U]$, $\bar{F}$ is $\rho$-separated on 
	$U_\alpha$ by Lemma 4.9. 
	For showing that $(U,\bar F) \le (T,F)$, properties (a) and (b) of 
	Definition 5.1 are immediate, and property (c) is easy to check.	
\end{proof}

\begin{lemma}
	Let $Z \subseteq \omega_1 \setminus \{ 0 \}$ be finite. 
	Then the set of $(U,G) \in \p$ such that $Z \subseteq \h[U]$ is dense. 
	In fact, for all $(T,F) \in \p$, there exists $(U,G) \le (T,F)$ such that 
	$U$ is a simple extension of $T$ and  
	$\h[U] = \h[T] \cup Z$.
\end{lemma}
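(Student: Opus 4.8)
The plan is to reduce the statement directly to Lemma 2.6 and Lemma 5.4. Given an arbitrary condition $(T,F) \in \p$, I would first set $B = \h[T] \cup Z$. This is a finite subset of $\omega_1$ containing $\h[T]$, so by Lemma 2.6 there is a standard finite tree $U$ which is a simple extension of $T$ with $\h[U] = B = \h[T] \cup Z$.

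Next, I would let $\bar F$ be the function with domain $\dom(F)$ which sends each $\tau \in \dom(F)$ to the downward closure of $F(\tau)$ in $U$, exactly as in the hypotheses of Lemma 5.4. Since $U$ is a simple extension of $T$, that lemma applies and yields $(U,\bar F) \in \p$ together with $(U,\bar F) \le (T,F)$. Taking $G = \bar F$, we have produced a condition $(U,G) \le (T,F)$ with $U$ a simple extension of $T$ and $\h[U] = \h[T] \cup Z$, which is the ``in fact'' clause. The density statement then follows immediately: since $Z \subseteq \h[T] \cup Z = \h[U]$, every condition $(T,F)$ has an extension lying in $\{ (U,G) \in \p : Z \subseteq \h[U] \}$, so this set is dense.

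I do not expect any genuine obstacle, since the substantive work has already been carried out in Lemma 2.6 (constructing a simple extension realizing a prescribed finite level set) and Lemma 5.4 (that forming the downward closures of the standard functions along a simple extension preserves membership in $\p$ and produces an extension, including the verification of clause (c) of Definition 5.1). The only bookkeeping is that $B$ is finite and contains $\h[T]$, which is trivial, and that ``simple extension'' is precisely the hypothesis needed to invoke Lemma 5.4, which it is. If one preferred to avoid quoting Lemma 2.6, an alternative is to induct on $|Z \setminus \h[T]|$, adding one new level at a time via the single-level case in the proof of Lemma 2.6 and applying Lemma 5.4 (or Lemma 5.3) at each step, using transitivity of the simple-extension relation (Lemma 2.3); but this merely re-derives the two cited lemmas, so the direct argument above is preferable.
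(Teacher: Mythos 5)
Your argument is correct and coincides exactly with the paper's proof: apply Lemma 2.6 to get a simple extension $U$ of $T$ with $\h[U] = \h[T] \cup Z$, then take the downward closures of the $F(\tau)$ in $U$ and invoke the lemma that this yields a condition extending $(T,F)$. One small slip: the lemma you invoke for the downward-closure step is Lemma 5.3, not ``Lemma 5.4'' --- the latter is the statement being proved, so as written the citation is circular, though it is clear from context (and from your parenthetical ``(or Lemma 5.3)'' later) that you mean 5.3.
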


\begin{proof}
	Let $(T,F) \in \p$. 
	By Lemma 2.6, we can find a standard finite tree $U$ which is a simple extension of $T$ 
	such that $\h[U] = \h[T] \cup Z$. 
	Define $G$ with domain equal to $\dom(F)$ so that for each $\gamma \in \dom(G)$, 
	$G(\gamma)$ is the downward closure of $F(\gamma)$ in $U$. 
	By Lemma 5.3, $(U,G) \in \p$ and $(U,G) \le (T,F)$.
\end{proof}

\begin{lemma}
	Let $(T,F) \in \p$ and $x \in T$. 
	Then for any $k > 0$, there exists $(W,H) \le (T,F)$ such that 
	$\h(x)+1 \in \h[W]$ and $|\ISucc_W(x)| \ge k$.
\end{lemma}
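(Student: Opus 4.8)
The plan is to first enlarge the tree so that the level immediately above $x$ is present, and then to adjoin fresh immediate successors of $x$ at that level while leaving the family of standard functions unchanged; because the new nodes are fresh, they cannot disturb $\rho$-separation.

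Concretely, I would first apply Lemma 5.4 with $Z = \{ \h(x)+1 \}$ to obtain a condition $(U,G) \le (T,F)$ with $U$ a simple extension of $T$ and $\h[U] = \h[T] \cup \{ \h(x)+1 \}$; in particular $\h(x)+1 \in \h[U]$, and since $x \in T \subseteq U$ we have $\h(x) \in \h[U]$ with $\h(x) < \h(x)+1 \le \max(\h[U])$, so $\h(x) \in \h[U] \cap \max(\h[U])$. Then I would set $n = \max\{ k, |\ISucc_U(x)| \}$, a positive natural number with $n \ge k$ for which $x$ has at most $n$-many immediate successors in $U$, and apply Lemma 2.10 to $U$ with the ordinal $\h(x)$ and the set $X = \{ x \}$ to get a standard finite tree $W$ extending $U$ with $\h[W] = \h[U]$, $W \setminus U \subseteq \ISucc_W(x)$, and $x$ having exactly $n$-many immediate successors in $W$. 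Finally I would take $H = G$ and verify that $(W,H)$ works.

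Checking $(W,H) \in \p$ is where the argument has content. Since $\h[W] = \h[U]$, Lemma 3.4 shows that each $G(\gamma)$ is still a standard function on $W$, so clauses (1) and (2) of Definition 5.1 hold. For clause (3), fix $\alpha \in \h[W]$: if $\alpha \ne \h(x)+1$ then $W_\alpha = U_\alpha$, since every node of $W \setminus U$ has height $\h(x)+1$, and $\rho$-separation of $H$ on $W_\alpha$ is inherited from $(U,G)$; if $\alpha = \h(x)+1$ then the new nodes $Y = W_\alpha \setminus U_\alpha$ lie outside the domain and range of every $G(\tau)$, which are subsets of $U$, so Lemma 4.11 gives that $H$ is $\rho$-separated on $U_\alpha \cup Y = W_\alpha$. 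Hence $(W,H) \in \p$. That $(W,H) \le (U,G)$, and so $(W,H) \le (T,F)$, is routine from Definition 5.1 since $H = G$ (for clause (c), any witnessing element already lies in $\dom(G(\gamma)) \cap \dom(G(\tau)) \subseteq U$ and can serve as its own $z$); and by construction $\h(x)+1 \in \h[W]$ while $|\ISucc_W(x)| = n \ge k$.

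There is no serious obstacle here: the one place the argument is not pure bookkeeping is the appeal to Lemma 4.11 to maintain $\rho$-separation on level $\h(x)+1$, and its hypothesis is met precisely because the nodes of $W \setminus U$ are fresh and therefore avoid the domain and range of every function of $G$.
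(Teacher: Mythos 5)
Your proof is correct and takes essentially the same approach as the paper: apply Lemma 5.4 to insert the level $\h(x)+1$, then use Lemma 2.10 to grow $\ISucc(x)$ while keeping the functions fixed. The only cosmetic differences are that you take $n = \max\{k, |\ISucc_U(x)|\}$ to avoid a case split, and you reprove the content of Lemma 5.2 inline via Lemmas 3.4 and 4.11 rather than citing Lemma 5.2 directly, which is what the paper does to conclude $(W,G) \in \p$ and $(W,G) \le (U,G)$.
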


\begin{proof}
	By Lemma 5.4, fix $(U,G) \le (T,F)$ such that $U$ is a simple extension of $T$ 
	and $\h(x)+1 \in \h[U]$. 
	If $|\ISucc_U(x)| \ge k$, then we are done. 
	Otherwise, apply Lemma 2.10 (letting $X = \{ x \}$ and $n = k$) 
	to find a standard finite tree $W$ extending $U$ such 
	that $\h[W] = \h[U]$, $W \setminus U \subseteq \ISucc_W(x)$, and $|\ISucc_W(x)| = k$. 
	By Lemma 5.2, $(W,G) \in \p$ and $(W,G) \le (U,G)$.
\end{proof}

\begin{lemma}
	The set of conditions $(U,G) \in \p$ such that $U$ is Hausdorff is dense.
\end{lemma}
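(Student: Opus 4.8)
The plan is to obtain the Hausdorff condition from a single application of Lemma 5.4, by inserting a carefully chosen finite set of brand-new levels. Given $(T,F) \in \p$, the idea is that for each nonzero limit ordinal $\delta$ of $\h[T]$ the only obstruction to being Hausdorff is that $T_\delta$ may fail to have unique drop-downs to $\max(\h[T] \cap \delta)$; but if we insert a new level $\alpha$ with $\max(\h[T]\cap\delta) < \alpha < \delta$, then in a simple extension $U$ carrying that new level, the third clause of Definition 2.2 (together with the remark following it) forces $U_\delta = T_\delta$ to have unique drop-downs to $\alpha$, and $\alpha$ will be the new $\max(\h[U]\cap\delta)$.

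Concretely, I would enumerate the finitely many nonzero limit ordinals $\delta_0,\dots,\delta_{k-1}$ in $\h[T]$, set $\beta_i = \max(\h[T]\cap\delta_i)$ (well-defined since $0 \in \h[T]$), and let $\alpha_i = \beta_i + 1$. Since $\delta_i$ is a limit ordinal above $\beta_i$, each $\alpha_i$ is a \emph{successor} ordinal with $\beta_i < \alpha_i < \delta_i$; a short ordinal computation shows $\alpha_i \notin \h[T]$ and that the $\alpha_i$ are pairwise distinct. Put $Z = \{\alpha_i : i < k\}$, a finite subset of $\omega_1 \setminus \h[T]$ all of whose members are successor ordinals, and apply Lemma 5.4 to fix $(U,G) \le (T,F)$ in $\p$ with $U$ a simple extension of $T$ and $\h[U] = \h[T] \cup Z$.

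To see that $U$ is Hausdorff, take any nonzero limit ordinal $\delta \in \h[U]$. Since the members of $Z$ are successors, $\delta \in \h[T]$, so $\delta = \delta_i$ for some $i$. I would then check that $\max(\h[U]\cap\delta_i) = \alpha_i$: the old levels below $\delta_i$ are all $\le \beta_i < \alpha_i$; for $j$ with $\delta_j < \delta_i$ one has $\alpha_j < \delta_j \le \beta_i < \alpha_i$; and for $j$ with $\delta_j > \delta_i$ one has $\alpha_j \ge \delta_i$, so such $\alpha_j$ contributes nothing below $\delta_i$. Finally, $\alpha_i \in \h[U]\setminus\h[T]$ is below $\max(\h[T])$ (as $\alpha_i < \delta_i \le \max(\h[T])$) and its least $\h[T]$-successor is exactly $\delta_i$; so the third clause of the definition of simple extension gives that $U_{\delta_i}$ has unique drop-downs to $\alpha_i = \max(\h[U]\cap\delta_i)$, which is precisely the Hausdorff requirement at $\delta_i$.

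I do not anticipate a genuine obstacle; the only point requiring a little care is the bookkeeping in the choice of $Z$, namely ensuring (i) that the inserted levels are successor ordinals, so that no new limit levels needing attention are created, and (ii) that inserting a new level just below $\delta_i$ does not disturb the value of $\max(\h[U]\cap\delta_j)$ for the other limit ordinals $\delta_j$. Both are handled by the choice $\alpha_i = \max(\h[T]\cap\delta_i)+1$. The degenerate case $k=0$, where $Z = \emptyset$ and $U = T$ is already vacuously Hausdorff, is covered by the same argument.
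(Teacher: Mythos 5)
Your proof is correct and takes essentially the same approach as the paper's: the paper also inserts a new successor-ordinal level $\delta^-$ with $\max(\h[T]\cap\delta)<\delta^-<\delta$ for each limit $\delta\in\h[T]$ and then invokes Lemma 5.4 and the third clause of Definition 2.2. Your specific choice $\alpha_i=\max(\h[T]\cap\delta_i)+1$ is one admissible instance of the paper's $\delta^-$, and the extra bookkeeping you supply (pairwise distinctness of the $\alpha_i$, identification of $\max(\h[U]\cap\delta_i)$) is simply made explicit where the paper leaves it implicit.
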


\begin{proof}
	Let $(T,F) \in \p$. 
	For each limit ordinal $\delta \in \h[T]$, 
	fix a successor ordinal $\delta^-$ such that 
	$\max(\h[T] \cap \delta) < \delta^- < \delta$. 
	Let 
	$$
	Z = \{ \delta^- : \delta \in \h[T], \ \delta \ \text{is a limit ordinal} \}
	$$
	Apply Lemma 5.4 to find $(U,G) \le (T,F)$ such that 
	$U$ is a simple extension of $T$ and $\h[U] = \h[T] \cup Z$. 
	To show that $U$ is Hausdorff, consider distinct $x$ and $y$ in $U_\delta$, 
	where $\delta \in \h[U]$ is a limit ordinal. 
	Since $\h[U] \setminus \h[T]$ consists of successor ordinals, $\delta \in \h[T]$, 
	so $x$ and $y$ are in $T_\delta$. 
	As $U$ is a simple extension of $T$, $x \res \delta^- \ne y \res \delta^-$.
\end{proof}

\begin{lemma}
	The set of conditions $(U,G) \in \p$ such that $U$ is normal is dense.
\end{lemma}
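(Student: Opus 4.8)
The plan is to deduce this density claim directly from the tree-theoretic padding lemma of Section~2 together with Lemma~5.2, with essentially no new work. Fix an arbitrary condition $(T,F) \in \p$; I will produce a stronger condition whose first component is normal, without altering either the set of heights or the family of functions.

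First, apply Lemma~2.9 to the standard finite tree $T$ to obtain a standard finite tree $U$ which extends $T$, has $\h[U] = \h[T]$, and is normal. Concretely, $U$ is built by passing up the levels of $T$ and, at each level below the top, adjoining a single immediate successor to every node that currently has none — this is the construction described just before Lemma~2.9, and one could instead iterate Lemma~2.10 (with $n = 1$) level by level. The point to note is that only new nodes at \emph{existing} levels of $T$ are added, so in particular no new heights appear; this is why, in contrast to the Hausdorff density lemma, there is no need to first invoke Lemma~5.4.

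Second, observe that $(U,F)$ is still a condition refining $(T,F)$. Since $\h[U] = \h[T]$, we trivially have $\h[U] \cap (\max(\h[T])+1) = \h[T]$, so Lemma~5.2 applies and yields $(U,F) \in \p$ with $(U,F) \le (T,F)$. (Internally this uses Lemma~3.4 to see that each $F(\gamma)$ remains a standard function on $U$, and Lemma~4.11 to see that $F$ remains $\rho$-separated on every $U_\alpha$, using that the nodes of $U_\alpha \setminus T_\alpha$ lie outside the domain and range of every $F(\gamma)$ because $F(\gamma) \subseteq T \otimes T$.) As $(T,F)$ was arbitrary and $U$ is normal, this shows that the set of conditions with normal first component is dense.

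I do not expect a genuine obstacle here: the substantive content — the existence of a normal padding with the same height set, and the preservation of standardness and of $\rho$-separation under upward tree extensions — is already packaged in Lemmas~2.9, 3.4, 4.11, and 5.2. The only thing to keep in mind while writing it up is that normality is a statement purely about successors at higher levels of $T$, so no expansion of $\h[T]$ is needed, which keeps the argument a two-step reduction rather than anything requiring an induction of its own.
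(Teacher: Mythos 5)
Your proof is correct and matches the paper's proof of this lemma essentially verbatim: apply Lemma 2.9 to obtain a normal extension $U$ of $T$ with $\h[U] = \h[T]$, then invoke Lemma 5.2 to conclude $(U,F) \in \p$ and $(U,F) \le (T,F)$. The parenthetical accounting via Lemmas 3.4 and 4.11 is also exactly what the paper's proof of Lemma 5.2 does, so there is nothing to add.
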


\begin{proof}
	Let $(T,F) \in \p$. 
	By Lemma 2.9, we can 
	fix a standard finite tree $U$ which is normal such that $U$ 
	extends $T$ and $\h[U] = \h[T]$. 
	By Lemma 5.2, $(U,F) \in \p$ and $(U,F) \le (T,F)$.
\end{proof}

\begin{lemma}
	For any $(T,F) \in \p$, $x \in T$, and countable $\alpha > \h(x)$, 
	there exists $(W,H) \le (T,F)$ such that $\alpha \in \h[W]$ 
	and for some $y \in W_\alpha$, $x <_W y$.
\end{lemma}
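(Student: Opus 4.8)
The plan is to split into three cases according to how $\alpha$ sits relative to $\h[T]$, after first reducing to the case where $T$ is normal. For the reduction, by Lemma 2.9 there is a normal standard finite tree $T'$ extending $T$ with $\h[T'] = \h[T]$, and then Lemma 5.2 gives $(T',F) \in \p$ with $(T',F) \le (T,F)$. Since $x \in T \subseteq T'$, since $\h(x)$ does not depend on the ambient tree, and since any $(W,H) \le (T',F)$ is also $\le (T,F)$ by transitivity of $\le$, we may assume from now on that $T$ itself is normal. If then $\alpha \in \h[T]$, applying the definition of normality (Definition 2.7) to $x$ and $\alpha$ produces some $y \in T_\alpha$ with $x <_T y$, so $(T,F)$ itself is the desired condition.

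Suppose next that $\h(x) < \alpha < \max(\h[T])$. Applying Lemma 5.4 with $Z = \{\alpha\}$, I obtain $(U,G) \le (T,F)$ with $U$ a simple extension of $T$ and $\h[U] = \h[T] \cup \{\alpha\}$. Let $\delta$ be the least element of $\h[T]$ above $\alpha$; since $T$ is normal and $\delta > \h(x)$, fix $z \in T_\delta$ with $x <_T z$. Because $U$ is a standard finite tree and $\alpha \in \h[U] \cap \h(z)$, the drop-down $y := z \res_U \alpha$ is a well-defined element of $U_\alpha$ (here I do not need $U$ to be normal: only a downward drop-down is used, which exists by Definition 2.1(4)). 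Both $x$ and $y$ lie below $z$ in $U$, hence are $<_U$-comparable, and as $\h(x) < \alpha = \h(y)$, Definition 2.1(3) forces $x <_U y$. So $(U,G)$ and $y$ are as required.

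The remaining case $\alpha > \max(\h[T])$ is the one point that needs a little care, since here Lemma 5.4 would only supply some node at the new top level, not one above $x$. Instead I would build the extension by hand: using normality choose $w \in T$ of height $\max(\h[T])$ with $x \le_T w$, put $\gamma = \omega \cdot \alpha$, and let $W = (T \cup \{\gamma\}, <_W)$ where $<_W \,=\, <_T \cup \{(u,\gamma) : u \le_T w\}$. A routine verification that $W$ satisfies Definition 2.1 — the crucial clause 2.1(4) for $\gamma$ holding because the branch of $T$ below $w$ meets every level of $\h[T]$ — shows that $W$ is a standard finite tree, and it is a simple extension of $T$ since the third clause of Definition 2.2 is vacuous for $\alpha > \max(\h[T])$. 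Then Lemma 5.3 yields $(W,\bar F) \in \p$ with $(W,\bar F) \le (T,F)$, where $\bar F(\tau)$ is the downward closure of $F(\tau)$ in $W$; and $y = \gamma \in W_\alpha$ satisfies $x \le_T w <_W \gamma$, hence $x <_W y$. The only mild obstacle in the whole argument is this explicit construction together with the bookkeeping needed to check Definitions 2.1 and 2.2 for $W$; the other two cases reduce immediately to the density facts already established.
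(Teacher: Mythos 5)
Your proof is correct, but it takes a longer road than necessary because you normalize \emph{before} adding $\alpha$ to the height set, which forces the three-way case analysis. The paper's proof (Lemma 5.8) simply reverses the order of the two density applications: first apply Lemma 5.4 to get $(U,G)\le(T,F)$ with $\alpha\in\h[U]$, then apply Lemma 5.7 to get $(W,H)\le(U,G)$ with $W$ normal; since normalization preserves the height set, $\alpha\in\h[W]$, and normality of $W$ immediately hands you a $y\in W_\alpha$ above $x$ with no case split at all. Your Case~2 drop-down argument and your Case~3 hand-built one-node extension are both sound (and your verification that the one-node extension is a simple extension of $T$, with the third clause of Definition~2.2 vacuous, is accurate), but they are doing by hand what Lemma~5.7 does automatically once $\alpha$ is already in the height set; in particular Case~3 could also be dispatched by a second application of Lemma~5.4 followed by Lemma~5.7 rather than by the explicit construction. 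In short: same ingredients, suboptimal ordering, yielding a more elaborate but correct argument.
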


\begin{proof}
	By Lemma 5.4, we can fix $(U,G) \le (T,F)$ such that $\alpha \in \h[U]$. 
	Now apply Lemma 5.7 to find $(W,H) \le (U,G)$ such that $W$ is normal. 
\end{proof}

\begin{definition}
	For any generic filter $G_{\p}$ on $\p$, let $T^{{G_{\p}}}$ be the tree with underlying set 
	$\bigcup \{ T : \exists F \ (T,F) \in {G_{\p}} \}$, and ordered by $x <_{{{G_{\p}}}} y$ 
	if there exists some $(T,F) \in {{G_{\p}}}$ such that $x <_T y$. 
	Let $T^{\dot{G}_{\p}}$ be a $\p$-name for this object.
\end{definition}

We refer to a tree with height $\omega_1$ and countable levels as an \emph{$\omega_1$-tree}. 
An $\omega_1$-tree is \emph{normal} if it has a root, every element has at least two immediate 
successors, it is Hausdorff, and every element has elements above it at every higher level 
(note that this is not exactly the same 
as the same definition of normal for standard finite trees). 
By $\omega$-ary, we mean that every element of the tree has $\omega$-many 
immediate successors.

\begin{proposition}
	The forcing poset $\p$ forces that, assuming $\omega_1$ is preserved, 
	$T^{\dot{G}_{\p}}$ is an $\omega_1$-tree which is $\omega$-ary and normal.
\end{proposition}

\begin{proof}
Let $G_{\p}$ be a generic filter on $\p$. 
It is easy to prove that $(T^{{G_{\p}}},<_{{G_{\p}}})$ is a tree (the well-foundedness follows 
from the fact that $x <_{{G_{\p}}} y$ implies that $\h(x) < \h(y)$). 
Lemma 5.4 implies that the height function on $T^{{G_{\p}}}$ coincides with 
the height function $\h$ that we defined on countable ordinals. 
It then follows that the levels 
of $T^{{G_{\p}}}$ are countable and $T^{{G_{\p}}}$ has height $\omega_1^V$. 
For being $\omega$-ary and normal, 
clearly $0$ is the root of $T^{{G_{\p}}}$, Lemma 5.5 implies that $T^{{G_{\p}}}$ is $\omega$-ary, 
Lemma 5.6 implies that $T^{{G_{\p}}}$ is Hausdorff, and Lemma 5.8 completes the proof.
\end{proof}

We prove in Section 7 that $\p$ is Knaster, and hence $\p$ preserves $\omega_1$. 
We also prove in Section 7 that $\p$ forces that $T^{\dot{G}_{\p}}$ is Suslin.

\begin{lemma}
	Suppose that $(T,F) \in \p$ and $\sigma < \omega_2$. 
	Then there exists $(U,G) \le (T,F)$ such that $\sigma \in \dom(G)$.
\end{lemma}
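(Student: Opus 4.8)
The plan is to dispense with the trivial case $\sigma \in \dom(F)$ by taking $(U,G)=(T,F)$, and otherwise to simply enlarge the index set of $F$ by assigning to $\sigma$ the empty function. So assume $\sigma \notin \dom(F)$, put $U = T$, and let $G$ be the function with domain $\dom(F) \cup \{ \sigma \}$ such that $G(\gamma) = F(\gamma)$ for all $\gamma \in \dom(F)$ and $G(\sigma) = \emptyset$. (Equivalently one could take $G(\sigma)$ to be the trivial standard function $\{ (0,0) \}$; it makes no difference.)

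First I would check that $(U,G) \in \p$. Clauses (1) and (2) of Definition 5.1 are immediate: $U = T$ is a standard finite tree, $\dom(G)$ is a finite subset of $\omega_2$, each $G(\gamma)$ with $\gamma \in \dom(F)$ is a standard function on $T$ by hypothesis, and the empty function $G(\sigma)$ vacuously satisfies every clause of Definition 3.2. For clause (3), fix $\alpha \in \h[T]$ with $\alpha > 0$ and fix an injective tuple listing $T_\alpha$ which witnesses that $F$ is $\rho$-separated on $T_\alpha$. Since $G(\sigma)$ is empty, there is no triple $(j,m,\sigma)$ with $G(\sigma)^m(a_i) = a_j$, so the relations between elements of $T_\alpha$ with respect to $\{ G(\tau) : \tau \in \dom(G) \}$ are exactly those with respect to $\{ F(\tau) : \tau \in \dom(F) \}$; hence the same tuple witnesses that $G$ is $\rho$-separated on $T_\alpha$.

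Next I would verify $(U,G) \le (T,F)$. Condition (a) holds since $T$ extends itself, and condition (b) holds since $\dom(F) \subseteq \dom(G)$ and $G$ agrees with $F$ on $\dom(F)$. For condition (c), observe that it quantifies only over distinct $\gamma, \tau \in \dom(F)$; for such a pair, if $x \in \dom(G(\gamma)) \cap \dom(G(\tau))$ and $G(\gamma)(x) = G(\tau)(x)$, then since $G$ agrees with $F$ on $\dom(F)$ we get $x \in \dom(F(\gamma)) \cap \dom(F(\tau)) \subseteq T$ and $F(\gamma)(x) = F(\tau)(x)$, so $z = x$ witnesses (c).

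There is essentially no obstacle here; the only points worth remarking are that assigning the empty function to the new index $\sigma$ introduces no new relations on any level, so $\rho$-separation is trivially inherited from $(T,F)$, and that clause (c) of the order is insensitive to the new index because it ranges only over $\dom(F)$.
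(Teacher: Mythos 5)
Your proposal is correct and matches the paper's proof exactly: both extend $F$ to the index $\sigma$ by assigning the empty function and then observe that membership in $\p$ and the ordering are trivially preserved. You have simply spelled out the routine verifications that the paper leaves to the reader.
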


\begin{proof}
	If $\sigma \in \dom(F)$, then we are done. 
	Otherwise, define $G$ with domain equal to $\dom(F) \cup \{ \sigma \}$, where 
	$G \res \dom(F) = F$ and $G(\sigma) = \emptyset$. 
	It is simple to check that $(T,G) \in \p$ and $(T,G) \le (T,F)$.
\end{proof}

\begin{lemma}[Augmentation]
	Suppose that $(T,F) \in \p$, $\sigma < \omega_2$, and $x \in T$. 
	Then there exists $(U,G) \le (T,F)$ such that $\sigma \in \dom(G)$ and 
	$x$ is in the domain and range of $G(\sigma)$.
\end{lemma}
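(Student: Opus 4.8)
The plan is to reduce, via Lemma 5.12, to the case $\sigma \in \dom(F)$, and then to treat the domain and the range separately. Indeed, it suffices to show: for any $(T,F) \in \p$ with $\sigma \in \dom(F)$ and any $x \in T$ there is $(U,G) \le (T,F)$ with $x \in \dom(G(\sigma))$. Applying this once puts $x$ into the domain of the $\sigma$-th function; applying the symmetric statement for the range to the resulting condition then puts $x$ into the range as well, and since passing to a stronger condition only enlarges $G(\sigma)$, $x$ remains in the domain, so transitivity of $\le$ finishes. The range statement is proved by an argument symmetric to the domain one (using that $F(\sigma)^{-1}$ is strictly increasing by Lemma 3.3 and that $\ran(F(\sigma))$ is downwards closed, being the image of a downwards closed function), so I describe only the domain case.

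Write $f = F(\sigma)$ and let $x_0 = 0 <_T x_1 <_T \cdots <_T x_k = x$ be the branch of $x$, whose heights enumerate $\h[T] \cap (\h(x)+1)$ by Definition 2.1(4). Since $\dom(f)$ is downwards closed, $\{ x_i : x_i \in \dom(f) \}$ is an initial segment $\{ x_0,\ldots,x_j \}$ of this branch; set $y_i = f(x_i)$ for $i \le j$. I will graft onto $T$, immediately above $y_j$, a fresh branch $y_{j+1} <_U \cdots <_U y_k$, choosing each $y_i$ to be a new ordinal of height $\h(x_i)$ not occurring in $T$ and making $y_i$ an immediate successor of $y_{i-1}$; applying the construction described just before Lemma 2.9 level by level yields a standard finite tree $U$ that extends $T$ with $\h[U] = \h[T]$. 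Put $G(\sigma) = f \cup \{ (x_i,y_i) : j < i \le k \}$ and $G(\gamma) = F(\gamma)$ for $\gamma \in \dom(F) \setminus \{ \sigma \}$. (The trivial case $x \in \dom(f)$, and the case $f = \emptyset$ — where one grafts above the root $y_0 = 0$ and includes $(0,0)$ in $G(\sigma)$ — are handled the same way.)

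It then remains to verify that $(U,G) \in \p$ and $(U,G) \le (T,F)$. For membership: $U$ is a standard finite tree by construction; $G(\sigma)$ is injective because the new values $y_i$ are fresh (hence pairwise distinct and outside $T \supseteq \ran(f)$), is strictly increasing and downwards closed because $(x_i)_i$ and $(y_i)_i$ are branches, is level preserving, and has no fixed point other than $0$ (again since $y_i \notin T \ni x_i$); thus $G(\sigma)$ is a standard function on $U$, and so are the unchanged $G(\gamma)$ by Lemma 3.4. For $\rho$-separation on a level $U_\alpha$: if $\alpha$ is not the height of any grafted node, then $U_\alpha = T_\alpha$ and $G$ restricted to $T$ equals $F$, so an enumeration witnessing $\rho$-separation of $F$ on $T_\alpha$ still works; otherwise $U_\alpha = T_\alpha \cup \{ y_i \}$ for the unique $i$ with $\h(x_i) = \alpha$, and $y_i$ has exactly one relation to an element of $U_\alpha$ with respect to $\{ G(\tau) : \tau \in \dom(G) \}$, namely $G(\sigma)^{-1}(y_i) = x_i$, so appending $y_i$ to a witnessing enumeration of $T_\alpha$ exhibits $\rho$-separation of $G$ on $U_\alpha$. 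For $(U,G) \le (T,F)$, clauses (a) and (b) of Definition 5.1 are immediate; for clause (c), the only function changed is $G(\sigma)$, and its domain elements not already in $\dom(f)$ — namely $x_{j+1},\ldots,x_k$ — satisfy $G(\sigma)(x_i) = y_i \notin T$, whereas every $G(\gamma)$ with $\gamma \ne \sigma$ equals $F(\gamma)$ and so has range inside $T$; hence no new equality $G(\gamma)(w) = G(\sigma)(w)$ can arise, and at points of $\dom(f)$ (or for pairs $\gamma,\tau \ne \sigma$) clause (c) holds with witness $z = w$, exactly as in $(T,F)$.

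The step I expect to be the main obstacle is the verification of clause (3) in the definition of $\p$ and of clause (c) of the ordering after the graft; these go through precisely because each grafted node is engineered to sit above $y_j$ in the $\sigma$-image branch and therefore carries a single relation, and because its $G(\sigma)$-image lies outside $T$, so the graft cannot create any spurious coincidence with the unchanged functions. The remaining points — that $U$ is a standard finite tree, that $G(\sigma)$ is a standard function, and the bookkeeping in chaining the domain and range steps — are routine given Sections 2--4.
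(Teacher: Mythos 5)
Your proposal is correct and takes essentially the same approach as the paper: the paper's proof proceeds by induction on $\h(x)$, at each step grafting one fresh node above $F(\sigma)(x^-)$ and noting that this node carries a single relation so that $\rho$-separation is preserved, whereas you simply unroll that induction, grafting the entire fresh branch $y_{j+1},\ldots,y_k$ at once and verifying $\rho$-separation level by level by the identical observation. The reduction via Lemma 5.12, the handling of Definition 5.1(c) via freshness of the grafted nodes, and the symmetric treatment of domain and range all match the paper's argument.
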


\begin{proof}
	By Lemma 5.11, 
	without loss of generality we may assume that $\sigma \in \dom(F)$. 
	We prove the statement about $x$ being in the domain by induction 
	on the height of $x$; the proof for the range is similar. 
	For the base case, if $x$ has height $0$, then $x = 0$. 
	So we can extend $(T,F)$ as required by mapping $0$ to $0$. 
	Now suppose that $x$ has height $\alpha > 0$. 
	If $x \in \dom(F(\sigma))$ then we are done, so assume not. 
	Let $x^-$ be the largest member of $\{ z \in T : z <_T x \}$. 
	By the inductive hypothesis, we may assume without loss of generality 
	that $x^- \in \dom(F(\sigma))$. 
	Extend $T$ to $U$ by adding a single element $z$ which is not in $T$ above 
	$F(\sigma)(x^-)$ with height $\alpha$. 
	Define $G$ with the same domain as $F$ with the only change being that $G(\sigma)(x) = z$.
	
	To prove that $(U,G)$ is a condition, we only show that 
	$G$ is $\rho$-separated on $U_\alpha$ since the other properties are clear. 
	Let $\vec a$ be an injective tuple which lists $T_\alpha$ so that 
	$F$ is $\rho$-separated on $\vec a$. 
	Since the only relation which $z$ has with members of $\vec a$ 
	is the equation $G(\sigma)^{-1}(z) = x$, 
	clearly $G$ is $\rho$-separated on $\vec a^{\frown}z$. 
	Hence, $(U,G) \in \p$, and easily $(U,G) \le (T,F)$.
\end{proof}

\begin{definition}
	For any generic filter ${G_{\p}}$ on $\p$ and $\tau < \omega_2$, 
	define 
	$$
	F_\tau^{{G_{\p}}} = \bigcup \{ F(\tau) : \exists T \ (T,F) \in {G_{\p}} \ 
	\text{and} \ \tau \in \dom(F) \}.
	$$
	Let $F_\tau^{\dot{G}_{\p}}$ be a $\p$-name for this object.
\end{definition}

\begin{proposition}
	The forcing poset $\p$ forces that for all $\tau < \omega_2$, 
	$F^{\dot{G}_{\p}}_\tau$ is an automorphism of $T^{\dot{G}_{\p}}$.
\end{proposition}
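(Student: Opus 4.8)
The plan is to fix a generic filter $G$ on $\p$ and an ordinal $\tau < \omega_2$, and to verify that $F_\tau^G$ is a level-preserving bijection of $T^G$ onto itself which, together with its inverse, is strictly increasing; this is precisely what it means for $F_\tau^G$ to be an automorphism of $T^G$. All the work takes place in $V[G]$, and several of the required properties transfer directly from the corresponding properties of the standard functions $F(\tau)$ using that $G$ is a directed family of conditions.

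First I would check that $F_\tau^G$ is a function from $T^G$ into $T^G$. If $(a,b)$ and $(a,b')$ both lie in $F_\tau^G$, they are witnessed by conditions $(T,F)$ and $(T',F')$ in $G$ with $a \in \dom(F(\tau)) \cap \dom(F'(\tau))$, and a common lower bound $(U,H) \in G$ (using clause (b) of Definition 5.1) puts both pairs into the single function $H(\tau)$, so $b = b'$; moreover $\ran(F_\tau^G) \subseteq T^G$ since each $H(\tau)$ maps $U$ into $U \subseteq T^G$. The same directed-union argument, applied to the facts that each $H(\tau)$ is injective and level preserving, shows that $F_\tau^G$ is injective and level preserving, and applied to strict increasingness of each $H(\tau)$ shows that $F_\tau^G$ is strictly increasing: if $x <_{T^G} y$, this relation is witnessed in a condition of $G$, and after passing to a condition $(U,H) \in G$ with $x,y \in \dom(H(\tau))$ we get $H(\tau)(x) <_U H(\tau)(y)$. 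Finally, since $\big(F_\tau^G\big)^{-1} = \bigcup\{F(\tau)^{-1} : (T,F) \in G,\ \tau \in \dom(F)\}$ and each $F(\tau)^{-1}$ is strictly increasing by Lemma 3.3, the same reasoning shows $\big(F_\tau^G\big)^{-1}$ is strictly increasing; alternatively one can argue directly that $F_\tau^G(x) <_{T^G} F_\tau^G(y)$ forces $x <_{T^G} y$, since otherwise $y \res \h(x) \ne x$, and applying $F_\tau^G$ to $y \res \h(x) <_{T^G} y$ would produce two distinct elements of level $\h(F_\tau^G(x))$ lying below $F_\tau^G(y)$, contradicting injectivity together with the fact that $T^G$ is a tree.

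The one step with real content is that $\dom(F_\tau^G) = T^G$ and $\ran(F_\tau^G) = T^G$, and I would obtain this from the Augmentation Lemma (Lemma 5.13) via a density argument. Given $x \in T^G$, fix $(T_1,F_1) \in G$ with $x \in T_1$, and consider the set of conditions $(U,H)$ that are either incompatible with $(T_1,F_1)$ or satisfy both $(U,H) \le (T_1,F_1)$ and $x \in \dom(H(\tau)) \cap \ran(H(\tau))$. This set is dense: starting from any condition, if it is compatible with $(T_1,F_1)$ pass to a common lower bound --- whose tree component contains $x$ --- and apply Lemma 5.13. Since $G$ is generic it meets this set, and the condition it meets cannot be incompatible with $(T_1,F_1) \in G$, so some $(U,H) \in G$ has $x$ in the domain and range of $H(\tau)$; hence $x \in \dom(F_\tau^G) \cap \ran(F_\tau^G)$.

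Combining the above, $F_\tau^G$ is a level-preserving order isomorphism of $T^G$ onto itself, hence an automorphism, and since $\tau < \omega_2$ was arbitrary this shows $\p$ forces $F_\tau^{\dot G}$ to be an automorphism of $T^{\dot G}$ for every $\tau < \omega_2$. I do not expect any genuine difficulty here: the only point needing care is to state the dense set for totality and surjectivity relative to a condition of $G$ already containing $x$ in its tree, so that Lemma 5.13 applies; everything else is a routine transfer of pointwise properties of standard functions through the directed union that defines $F_\tau^G$.
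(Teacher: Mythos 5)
Your proposal is correct and follows essentially the same route as the paper, which simply records that function-hood, injectivity, level-preservation, and strict increasingness transfer from the individual $F(\tau)$'s through the directed union, and invokes Lemma 5.13 for totality and surjectivity. You have filled in the routine details, and in particular you correctly state the density argument relative to a fixed condition $(T_1,F_1) \in G$ containing $x$ in its tree component, which is exactly the point needing care when invoking Lemma 5.13.
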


\begin{proof}
	A straightforward argument shows that $F^{\dot{G}_{\p}}_\tau$ is forced to be 
	a function, and by Lemma 5.12, it is total and surjective. 
	Whenever $(T,F) \in \p$ and $\tau \in \dom(F)$, 
	$F(\tau)$ is injective and strictly increasing. 
	It easily follows that $F^{\dot{G}_{\p}}_\tau$ is forced to be injective and strictly increasing. 
	So $F^{\dot{G}_{\p}}_\tau$ is forced to be a strictly increasing bijection, and hence an 
	automorphism.
\end{proof}

\begin{proposition}
	The forcing poset $\p$ forces that for all $\gamma < \tau < \omega_2$, 
	$F^{\dot{G}_{\p}}_\gamma$ and $F^{\dot{G}_{\p}}_\tau$ are strongly almost disjoint subsets 
	of $T^{\dot{G}_{\p}} \otimes T^{\dot{G}_{\p}}$.
\end{proposition}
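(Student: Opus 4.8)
The plan is to observe that, as subsets of $T^{\dot G}\otimes T^{\dot G}$, the automorphisms $F^{\dot G}_\gamma$ and $F^{\dot G}_\tau$ — which are total, injective, strictly increasing, and level preserving by Proposition 5.16 — intersect in exactly the graph of $F^{\dot G}_\gamma$ restricted to the agreement set $E:=\{x\in T^{\dot G}:F^{\dot G}_\gamma(x)=F^{\dot G}_\tau(x)\}$. Thus it suffices to control $E$, and the crucial point is that clause (c) in the definition of the order on $\p$ is tailored precisely to force $E$ to sit inside the downward closure of a fixed finite set of nodes.

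First I would use Lemma 5.12 (applied twice) to see that $\{(T,F)\in\p:\gamma,\tau\in\dom(F)\}$ is dense, and fix by genericity some $(T_0,F_0)\in G$ with $\gamma,\tau\in\dom(F_0)$. Put $W=\{z\in T_0: z\in\dom(F_0(\gamma))\cap\dom(F_0(\tau))$ and $F_0(\gamma)(z)=F_0(\tau)(z)\}$; this is finite since $T_0$ is finite. The main step is the claim that $E$ is contained in the downward closure of $W$ in $T^{\dot G}$. To prove it, fix $x\in E$ and let $y=F^{\dot G}_\gamma(x)=F^{\dot G}_\tau(x)$. Since the equations $F^{\dot G}_\gamma(x)=y$ and $F^{\dot G}_\tau(x)=y$ are each witnessed by a condition in $G$, and $G$ is directed, I can find $(T_1,F_1)\in G$ with $(T_1,F_1)\le(T_0,F_0)$, with $x\in\dom(F_1(\gamma))\cap\dom(F_1(\tau))$, and with $F_1(\gamma)(x)=F_1(\tau)(x)=y$. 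Applying clause (c) of Definition 5.1 to $(T_1,F_1)\le(T_0,F_0)$ — legitimate because $\gamma\neq\tau$ both lie in $\dom(F_0)$ — yields $z\in T_0$ with $x\le_{T_1}z$ and $F_0(\gamma)(z)=F_0(\tau)(z)$; that is, $z\in W$ and $x\le_{T^{\dot G}}z$, as desired.

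It then remains to transfer this back to the tree product. For $z\in W$ write $w=F^{\dot G}_\gamma(z)$ ($=F_0(\gamma)(z)=F_0(\tau)(z)$). If $x\in E$ satisfies $x\le_{T^{\dot G}}z$, then $F^{\dot G}_\gamma(x)\le_{T^{\dot G}}F^{\dot G}_\gamma(z)=w$ because $F^{\dot G}_\gamma$ is order preserving, so $(x,F^{\dot G}_\gamma(x))\le(z,w)$ in $T^{\dot G}\otimes T^{\dot G}$; hence $F^{\dot G}_\gamma\cap F^{\dot G}_\tau$ lies in the downward closure of the finite set $\{(z,w):z\in W\}$. Moreover, for each fixed $z\in W$ the set of pairs $(x,F^{\dot G}_\gamma(x))$ with $x\le_{T^{\dot G}}z$ is a chain in $T^{\dot G}\otimes T^{\dot G}$: any two such $x,x'$ lie on the branch below $z$ hence are $<_{T^{\dot G}}$-comparable, and $F^{\dot G}_\gamma$ preserves comparability. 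Each such chain is countable, since it injects into $\{v\in T^{\dot G}:v\le_{T^{\dot G}}z\}$, which has order type $\h(z)+1<\omega_1$. Therefore $F^{\dot G}_\gamma\cap F^{\dot G}_\tau$ is a union of at most $|W|$ countable chains.

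I expect the only real difficulty to be the main step: correctly combining directedness of $G$ with clause (c) to locate the witness $z$, and recognizing that clause (c) is exactly what prevents the two automorphisms from agreeing away from a fixed finite set. The passage to $T^{\dot G}\otimes T^{\dot G}$ is routine, though one should be slightly careful to conclude a genuine finite union of chains rather than merely containment in the downward closure of a finite subset of the product (which in a tree product need not be a finite union of chains); this works only because on the agreement set the second coordinate is a function of the first.
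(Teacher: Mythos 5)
Your proof is correct and matches the paper's argument: fix $(T_0,F_0)\in G$ with $\gamma,\tau\in\dom(F_0)$, use directedness of $G$ together with clause (c) of Definition 5.1 to force any agreement point of $F^{\dot G}_\gamma$ and $F^{\dot G}_\tau$ below the finite agreement set of $F_0(\gamma)$ and $F_0(\tau)$, and conclude. One small remark: your closing caution about the tree product is unfounded, since $T^{\dot G}\otimes T^{\dot G}$ is itself a tree (the predecessors of a level-matched pair $(z,z^*)$ are exactly the pairs $(z\res\delta,z^*\res\delta)$ for $\delta\le\h(z)$, which form a chain), so the downward closure of any finite subset of $T^{\dot G}\otimes T^{\dot G}$ is automatically a finite union of countable chains — which is precisely how the paper phrases its conclusion.
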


\begin{proof}
	Let ${G_{\p}}$ be a generic filter on $\p$. 
	By Lemma 5.11, we can fix a condition $(T,F) \in {G_{\p}}$ such that $\gamma$ and $\tau$ 
	are in $\dom(F)$. 
	Suppose that $F^{{G_{\p}}}_\gamma(x) = y$ and $F^{{G_{\p}}}_\tau(x) = y$. 
	Then clearly there exists some $(W,H) \in {G_{\p}}$ such that $(W,H) \le (T,F)$, 
	$H(\gamma)(x) = y$, and $H(\tau)(x) = y$. 
	By the definition of the ordering of $\p$, 
	there exist $z$ and $z^*$ in $T$ 
	such that $x \le_W z$, $F(\gamma)(z) = z^*$, and $F(\tau)(z) = z^*$. 
	Hence, in the tree $T^{{G_{\p}}} \otimes T^{{G_{\p}}}$, $(x,y) \le (z,z^*)$. 
	It follows that $F^{{G_{\p}}}_\gamma \cap F^{{G_{\p}}}_\tau$ is a subset of the downward closure 
	of the finite set $\{ (a,b) \in T \otimes T : F(\gamma)(a) = b \}$, and therefore 
	is a finite union of countable chains.
\end{proof}

In Section 7 we prove that $\p$ is Knaster and forces that $T^{\dot{G}_{\p}}$ is Suslin. 
These two facts combined with Proposition 5.10 and Propositions 5.14 and 5.15 complete  
the proof of the main theorem.

\section{Making the Functions Bijective}

In order to prove that $\p$ forces that 
the generic tree $T^{{\dot{G}_{\p}}}$ is Suslin, we need to apply 
Proposition 4.11 ($1$-Key Property). 
The main challenge in doing this is to construct a condition 
which satisfies the assumption given in the fourth bullet point of that proposition. 
Namely, we need to extend a condition so that 
some of its functions which are separated 
on a subset of some level of its tree are total and surjective above that subset. 
In this section we  achieve this goal.

\begin{lemma}
	Suppose that $(T,F) \in \p$, $\alpha \in \h[T] \cap \max(\h[T])$, 
	$X \subseteq T_\alpha$, and $n$ is a positive natural number such that 
	every element of $X$ has at most $n$-many immediate successors. 
	Then there exists $U$ such that:
	\begin{enumerate}
	\item $(U,F) \in \p$ and $(U,F) \le (T,F)$;
	\item $\h[T] = \h[U]$;
	\item $U \setminus T \subseteq \bigcup \{ \mathrm{ISucc}_U(x) : x \in X \}$;
	\item every element of $X$ has exactly $n$-many immediate successors in $U$.
	\end{enumerate}
\end{lemma}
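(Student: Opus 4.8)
The plan is to reduce this to two results already established: Lemma 2.10, which produces the required finite tree, and Lemma 5.2, which guarantees that leaving the second component unchanged yields a condition below $(T,F)$. First I would apply Lemma 2.10 with the given $\alpha$, $X$, and $n$ — the hypothesis that every element of $X$ has at most $n$-many immediate successors in $T$ is precisely what Lemma 2.10 requires — to obtain a standard finite tree $U$ extending $T$ with $\h[U] = \h[T]$, with $U \setminus T \subseteq \bigcup \{ \ISucc_U(x) : x \in X \}$, and in which every element of $X$ has exactly $n$-many immediate successors in $U$. This gives conclusions (2), (3), and (4) immediately.

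For conclusion (1), since $\h[U] = \h[T]$ we trivially have $\h[U] \cap (\max(\h[T])+1) = \h[T]$, so Lemma 5.2 applies directly and yields both $(U,F) \in \p$ and $(U,F) \le (T,F)$. This completes the argument, and it follows the same pattern used to prove Lemma 5.5.

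Since everything is delivered by Lemma 5.2, there is no genuine obstacle here; the only point worth keeping in mind — and it is already absorbed into Lemma 5.2 — is why $F$ stays admissible on $U$. All new nodes of $U$ lie on the single level $\beta = \min(\h[T] \setminus (\alpha+1))$, so $U_\gamma = T_\gamma$ for $\gamma \ne \beta$, and the new nodes of $U_\beta \setminus T_\beta$ lie outside the domain and range of every $F(\tau)$ because $F(\tau) \subseteq T \otimes T$; hence $\rho$-separation on $U_\beta$ follows from $\rho$-separation on $T_\beta$ by Lemma 4.11, while the remaining clauses of $\rho$-separation and of being a standard function on $U$ are unaffected by this height-preserving extension (as in Lemma 3.4). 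Finally, clauses (a), (b), and (c) of the definition of $\le$ on $\p$ hold automatically because $F$ is not modified.
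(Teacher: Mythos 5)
Your proof is correct and matches the paper's argument exactly: apply Lemma 2.10 to build $U$ and then invoke Lemma 5.2 to conclude $(U,F)\in\p$ and $(U,F)\le(T,F)$. The extra paragraph explaining why Lemma 5.2 applies is accurate but redundant, since that lemma already packages everything needed.
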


\begin{proof}
	Apply Lemma 2.10 to find a standard finite tree $U$ which extends $T$ and satisfies 
	(2), (3), and (4). 
	By Lemma 5.2, $(U,F) \in \p$ and $(U,F) \le (T,F)$.
\end{proof}

\begin{proposition}
	Suppose that $(T,F) \in \p$, 
	$\alpha \in \h[T] \cap \max(\h[T])$, 
	$\beta = \min(\h[T] \setminus (\alpha+1))$, 
	$X \subseteq T_\alpha$ is non-empty, and $A \subseteq \dom(F)$. 
	Assume that $\{ F(\tau) : \tau \in A \}$ is separated on $X$. 
	Then there exists $(U,G) \in \p$ satisfying:
	\begin{itemize}
		\item $(U,G) \le (T,F)$;
		\item $\h[T] = \h[U]$ and $\dom(F) = \dom(G)$;
		\item $U \setminus T \subseteq \bigcup \{ \ISucc_U(x) : x \in X \}$;
		\item for all $x \in X$, $\ISucc_U(x)$ is non-empty;
		\item for all $\tau \in \dom(G)$ and for all 
		$z \in \dom(G(\tau)) \setminus \dom(F(\tau))$, 
		both $z$ and $G(\tau)(z)$ are in $\bigcup \{ \ISucc_U(x) : x \in X \}$;
		\item $\{ G(\tau) : \tau \in A \}$ is separated on 
		$\bigcup \{ \ISucc_U(x) : x \in X \}$;
		\item for all $\tau \in A$ and for all $x, y \in X$, if 
		$x \in \dom(G(\tau))$ and $G(\tau)(x) = y$, 
		then $\ISucc_U(x) \subseteq \dom(G(\tau))$ and 
		$\ISucc_U(y) \subseteq \ran(G(\tau))$.
	\end{itemize}
\end{proposition}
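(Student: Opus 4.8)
The plan is to build $U$ by adjoining many new immediate successors to the elements of $X$, leaving all heights and all indices of $F$ fixed, and to obtain each $G(\tau)$ with $\tau\in A$ from $F(\tau)$ by adding pairs of these new immediate successors so as to lift one level up the pattern of relations that $\{F(\xi):\xi\in A\}$ has on $X$. I would begin by fixing an injective tuple $\vec a=(a_0,\dots,a_{n-1})$ listing $X$ on which $\{F(\tau):\tau\in A\}$ is separated, and analysing the \emph{relation graph} on $X$ obtained by joining $a_i$ to $a_j$ whenever some $F(\tau)^m$ (with $\tau\in A$, $m\in\{-1,1\}$) sends one to the other. By Lemma 4.6 any two adjacent vertices are joined by a unique such relation, and the separation of $\vec a$ forces this graph to be a forest (process $i$ upward); rooting each component at its least-index vertex, every non-root $c$ acquires a parent edge carrying a relation $F(\tau_c)^{m_c}(c)=\mathrm{parent}(c)$.

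Next I would choose a natural number $N$ much larger than $|X|\cdot\max_{x\in X}|\ISucc_T(x)|$ and apply Lemma 6.1 with $n=N$ to pass to $(U,F)\le(T,F)$ in which $\h[U]=\h[T]$, $U$ end-extends $T$, the set $U\setminus T$ consists of level-$\beta$ points each lying in some $\ISucc_U(x)$ with $x\in X$, and $|\ISucc_U(x)|=N$ for every $x\in X$; write $\mathrm{new}_x=\ISucc_U(x)\setminus\ISucc_T(x)$. To define $G$ I keep $G(\tau)=F(\tau)$ for $\tau\in\dom(F)\setminus A$, and for $\tau\in A$ I proceed through the forest from the roots downward, attaching to each node $c$ a bijection $\Phi_c\colon\ISucc_U(c)\to N$: at a root $\Phi_c$ is arbitrary, while at a non-root $c$ with parent edge $F(\tau_c)^{m_c}(c)=p$ the map $F(\tau_c)^{m_c}\res\ISucc_T(c)$ is an injection of a subset of $\ISucc_T(c)$ into $\ISucc_T(p)$, so I choose $\Phi_c$ to agree with $\Phi_p\circ F(\tau_c)^{m_c}$ on that subset and --- exploiting the slack in $N$ --- to place the coordinates arising from old immediate successors of the various neighbours of $c$ into pairwise disjoint blocks, disjoint also from the coordinates of $\mathrm{new}_c$ save for the parent edge. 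With all the $\Phi$'s fixed, for every forest edge $F(\tau)^m(x)=y$ I adjoin to $G(\tau)^m$ every pair $(w,w')$ with $w\in\ISucc_U(x)$, $w'\in\ISucc_U(y)$ and $\Phi_y(w')=\Phi_x(w)$; then $G(\tau)\res\ISucc_U(x)$ is a bijection onto $\ISucc_U(y)$ extending $F(\tau)^m\res\ISucc_T(x)$, and one checks each $G(\tau)$ is a single-valued extension of $F(\tau)$.

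The verifications then divide up as follows. That each $G(\tau)$ is a standard function on $U$ reduces, via Lemmas 3.5, 3.6 and 3.7, to checking $\h(a_0\land a_1)=\h(b_0\land b_1)$ for pairs $(a_0,b_0),(a_1,b_1)\in G(\tau)$, which I would do by cases according to whether each coordinate is old or new, using that meets in $U$ of immediate successors of $X$ are computed through their level-$\alpha$ predecessors and quoting Lemma 3.7 for $F(\tau)$ itself. That $G$ is $\rho$-separated on each $U_\gamma$ is automatic for $\gamma\ne\beta$ (there $U_\gamma=T_\gamma$); for $U_\beta$ I would invoke the Characterization of $\rho$-Separation (Proposition 4.14). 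Its clause (1) holds since a relation on $U_\beta$ involving a new point must be one of the adjoined pairs, whose label is pinned down by its forest edge, so no distinct companion relation exists, while among old points every $G$-relation is an $F$-relation and $F$ is $\rho$-separated on $T_\beta$. Its clause (2) (no loops) I would prove by projecting a hypothetical loop in $U_\beta$ down to $T_\alpha$, where the $F$-relation graph is loop-free, and ruling out the backtrack this forces --- by Lemma 4.6 when both of its steps are adjoined pairs, and by inspecting the coordinate systems otherwise. That $(U,G)\le(T,F)$ needs only clause (c) of Definition 5.1: from $G(\gamma)(w)=G(\tau)(w)$ with $\gamma\ne\tau$ one shows, again using the adjoined-pair structure and Lemma 4.6, that both $(w,\cdot)$-pairs already lie in $F$, so $z=w$ works, or that the situation cannot occur. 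Finally the seven bullets are short once the above is in place; in particular the sixth follows by listing $\bigcup\{\ISucc_U(x):x\in X\}$ block by block with block $a_i$ equal to $\ISucc_U(a_i)$, since a relation of $\{G(\tau):\tau\in A\}$ inside a block would drop down to a fixed point of some $F(\tau)$ at a nonzero node and each block has at most the single parent-edge relation to an earlier block, and the seventh is immediate from the bijectivity of $G(\tau)\res\ISucc_U(x)$.

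The step I expect to be the main obstacle is the choice of the coordinate systems $\Phi_c$ and the consequent verification of clause (2) of Proposition 4.14 at level $\beta$. The difficulty is that when $c$'s partial function to a neighbour $c'$ fails to be onto $\ISucc_T(c')$, the missing old immediate successors of $c'$ must be hit from $\ISucc_U(c)$, which threatens to equip some member of $\mathrm{new}_c$ with a second relation to an earlier level-$\beta$ point besides its parent-edge relation, and --- if unchecked --- to manufacture loops at level $\beta$ out of data that is acyclic at level $\alpha$. This is exactly why $N$ must be taken so large and the $\Phi_c$ arranged so that each edge's old-neighbour coordinates land in disjoint blocks: with that in place, every new point carries essentially a single upward relation, the new part of the relation graph hangs off the old $\rho$-separated part like a forest attached to roots, and any loop at level $\beta$ descends to a genuine loop of $F$ at level $\alpha$.
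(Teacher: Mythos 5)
Your plan is recognisably the same strategy as the paper's: fatten $\ISucc_U(x)$ for each $x\in X$, then lift each relation $F(\tau)(a_i)=a_j$ (with $\tau\in A$) to a bijection $\ISucc_U(a_i)\to\ISucc_U(a_j)$ while steering old points that leave $\dom(F(\tau))$ into new points, so that the new part of the relation graph on $U_\beta$ hangs acyclically off the old part. The difference is bookkeeping. The paper fixes $|\ISucc_U(a_i)|=pq$ and partitions each $\ISucc_U(a_i)$ uniformly into $q$ blocks $X^i_0,\dots,X^i_{q-1}$ of size $p$, with the old points always in $X^i_i$, and imposes three symmetric rules: $G(\tau)$ carries $X^i_k$ to $X^j_k$ for $k\ne i,j$, sends $X^i_i\setminus\dom(F(\tau))$ into $X^j_i$, and pulls $X^j_j\setminus\ran(F(\tau))$ back into $X^i_j$. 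This symmetric indexing by the \emph{source} node immediately yields the paper's Claim~1 (an old-to-new step lands in a block tagged by where you came from), and from it Claims~2 and~3, which are exactly the ``loops descend to $T_\alpha$'' facts you want. Your forest-rooted coordinate maps $\Phi_c$ achieve the same end but make each block asymmetric in a parent/child-dependent way, and the disjointness you need is not actually pinned down by the sentence ``place the coordinates arising from old immediate successors of the various neighbours of $c$ into pairwise disjoint blocks'': since the $\Phi$'s are built root-to-leaf, the coordinate ranges of the children's old points are not yet defined when you set $\Phi_c$, and you also need a \emph{global} guarantee (across non-adjacent $a_i,a_k$) that an old coordinate of $a_i$ and an old coordinate of $a_k$ coincide only along a genuine relation; otherwise the analogue of Claim~2 (an old-new-old step forces a backtrack) does not follow. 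You quite correctly flag this as the main obstacle, but leave it at the level of ``with $N$ large and the $\Phi_c$ arranged suitably''; that is precisely what the paper's explicit partition $\{X^i_k\}$ was designed to deliver without further choices. Also, your remark that the sixth bullet needs a block-by-block argument is more work than necessary: once $\{G(\tau):\tau\in A\}$ agrees with $F$ on $T_\alpha$ and hence is separated on $X$, Lemma~4.7 (Strong Persistence) gives separation on $\bigcup\{\ISucc_U(x):x\in X\}$ directly.

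In short: right idea, right shape of argument, and you have correctly identified where the difficulty lies, but the coordinate-system specification is underdetermined exactly at the point where it matters (the analogues of Claims~1--3), so the proof is not complete as written. Replacing the forest-rooted $\Phi_c$'s by the paper's symmetric source-indexed partition $\{X^i_k\}$ removes the ambiguity and makes the case analysis finite and checkable.
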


\begin{proof}
	Let $q = |X|$. 
	Fix an injective tuple $(a_0,\ldots,a_{q-1})$ which lists the elements of $X$ so that 
	$\{ F(\tau) : \tau \in A \}$ is separated on $(a_0,\ldots,a_{q-1})$. 
	Choose a natural number $p > 0$ such that every element of $X$ has 
	at most $p$-many immediate successors. 
	Applying Lemma 6.1, fix $U$ such that $(U,F) \in \p$, $(U,F) \le (T,F)$, 
	$\h[T] = \h[U]$, $U \setminus T \subseteq \bigcup \{ \ISucc_U(x) : x \in X \}$, 
	and every element of $X$ has exactly $pq$-many immediate successors in $U$. 
	Let $Y = \bigcup \{ \ISucc_U(x) : x \in X \}$. 
	For each $i < q$, fix a partition $\{ X^i_0,\ldots,X^i_{q-1} \}$ of $\ISucc_U(a_i)$ 
	into disjoint sets each of size $p$ and satisfying that 
	$\ISucc_T(a_i) \subseteq X^i_i$.
	
	We define a function $G$ with domain equal to $\dom(F)$ 
	satisfying the following properties for each $\tau \in \dom(F)$:
	\begin{itemize}
		\item[(I)] $G(\tau)$ is a standard function on $U$ and 
		$F(\tau) \subseteq G(\tau)$;
		\item[(II)] if $\tau \notin A$, then $G(\tau) = F(\tau)$;
		\item[(III)] if $\tau \in A$, $G(\tau)(x) = y$, and $x \notin \dom(F(\tau))$, 
		then $x$ and $y$ are in $Y$ and $F(\tau)(x \res \alpha) = y \res \alpha$.
	\end{itemize}

	Let us see what conclusions can be drawn from (I)--(III). 
	Note that for all $\tau \in \dom(F)$ and for all 
	$\delta \in \h[U]$ different from $\beta$, 
	$G(\tau) \res U_\delta = F(\tau) \res U_\delta$. 
	In particular, $\{ G(\tau) : \tau \in A \}$ is separated on $(a_0,\ldots,a_{q-1})$. 
	Concerning showing that $(U,G) \in \p$: Definition 5.1(1,2) are clear, and 
	Definition 5.1(3) holds provided that $G$ is $\rho$-separated on $U_\beta$. 
	For showing that $(U,G) \le (U,F)$, Definition 5.1(a,b) are clear, so it 
	suffices to prove Definition 5.1(c). 
	Assume for a moment that $(U,G) \in \p$ and $(U,G) \le (U,F)$, 
	and let us review the conclusions of the proposition. 
	The first five bullet points of the proposition are clear, and the sixth 
	follows from the fact that $\{ G(\tau) : \tau \in A \}$ is separated on $X$ 
	and Lemma 4.5 (Strong Persistence). 
	So for verifying the conclusions of the proposition, 
	it suffices to prove the seventh bullet point. 

	To summarize, in order to complete the proof of the proposition, 
	it suffices to define a function $G$ with domain equal to $\dom(F)$ 
	which satisfies (I), (II), and (III) above for all $\tau \in \dom(F)$, and 
	also has the following properties:
	\begin{itemize}
	\item[(IV)] $G$ is $\rho$-separated on $U_\beta$;
	\item[(V)] for all distinct $\gamma$ and $\tau$ in $\dom(F)$, if 
	$x \in \dom(G(\gamma)) \cap \dom(G(\tau))$ and 
	$G(\gamma)(x) = G(\tau)(x)$, then there exists some $z \in U$ 
	such that $x \le_U z$ and $F(\gamma)(z) = F(\tau)(z)$;
	\item[(VI)] for all $\tau \in A$ and for all $x, y \in X$, if 
	$x \in \dom(G(\tau))$ and $G(\tau)(x) = y$, 
	then $\ISucc_U(x) \subseteq \dom(G(\tau))$ and 
	$\ISucc_U(y) \subseteq \ran(G(\tau))$.
	\end{itemize}

	Let us now proceed with the definition of $G$. 
	For all $\tau \in \dom(F) \setminus A$, define $G(\tau) = F(\tau)$. 
	So (II) is satisfied. 
	Now consider $\tau \in A$. 
	Define $G(\tau) \res \dom(F(\tau)) = F(\tau)$. 
	Consider $x \in U \setminus \dom(F(\tau))$. 
	We let $x$ be in the domain of $G(\tau)$ if and only if 
	$x \res \alpha \in X \cap \dom(F(\tau))$ and $F(\tau)(x \res \alpha) \in X$.  
	In that case, 
	we define $G(\tau)(x)$ to be an element of $\ISucc_U(F(\tau)(x \res \alpha))$, 
	as described below. 
	In other words, for any $i, j < q$ such that $F(\tau)(a_i) = a_j$, 
	we  define $G(\tau)$ on $\ISucc_U(a_i)$ so that it is a bijection 
	between $\ISucc_U(a_i)$ and $\ISucc_U(a_j)$. 
	Let us assume for a moment that we succeed in defining $G(\tau)$ in this manner. 
	It is routine to verify that $G(\tau)$ is a standard function on $U$, 
	so (I) holds. 
	Also, (III) and (VI) are clear.
	
	So let $i, j < q$ be given such that $F(\tau)(a_i) = a_j$. 
	Since $F(\tau)$ is a standard function, $i \ne j$. 
	Recall that $\{ X^i_0,\ldots,X^i_{q-1} \}$ is a partition of $\ISucc_U(a_i)$ 
	into disjoint sets each of size $p$ and satisfying that 
	$$
	\dom(F(\tau)) \cap \ISucc_U(a_i) \subseteq \ISucc_T(a_i) \subseteq X^i_i.
	$$
	Similarly, $\{ X^j_0,\ldots,X^j_{q-1} \}$ is a partition of $\ISucc_U(a_j)$ 
	into disjoint sets each of size $p$ and satisfying that 
	$$
	\ran(F(\tau)) \cap \ISucc_U(a_j) \subseteq \ISucc_T(a_j) \subseteq X^j_j.
	$$

	We define $G(\tau)$ extending $F(\tau)$ 
	which maps $\ISucc_U(a_i)$ bijectively onto $\ISucc_U(a_j)$ 
	and has the following properties:
	\begin{itemize}
		\item[(a)] for all $k \in q \setminus \{ i, j \}$, 
		$G(\tau)[X^i_k] = X^j_k$;
		\item[(b)] $G(\tau)[X^i_i \setminus \dom(F(\tau))] \subseteq X^j_i$;
		\item[(c)] $G(\tau)^{-1}[X^j_j \setminus \dom(F(\tau)^{-1})] 
		\subseteq X^i_j$.
	\end{itemize}
	Let us see how we can arrange this. 
	By the last paragraph, 
	since $G(\tau) \res \dom(F(\tau)) = F(\tau)$, 
	we have specified $G(\tau)(x)$ for 
	those $x$ which are in the set 
	$\dom(F(\tau)) \cap \ISucc_U(a_i)$, 
	and hence are in $X^i_i$, and the values 
	$G(\tau)(x)$ for such $x$ are in $X^j_j$. 
	For each $k < q$ different from $i$ and $j$, 
	$X^i_k$ and $X^j_k$ have the same size $p$, so we can easily arrange that (a) holds. 
	This defines $G(\tau)$ on $\ISucc_U(a_i) \setminus (X^i_i \cup X^i_j)$ and 
	on $\dom(F(\tau)) \cap X^i_i$. 
	We have not defined $G(\tau)(x)$ yet for any $x \in X^i_j$. 
	Also, the values of $G(\tau)$ which we have defined so far are not in $X^j_i$. 
	Define $G(\tau)$ on $X^i_i \setminus \dom(F(\tau))$ so that it maps 
	injectively into $X^j_i$. 
	This completes the definition of $G(\tau)$ on $X^i_i$ and it satisfies (b). 
	It remains to define $G(\tau)$ on $X^i_j$ so that (c) is satisfied. 
	We have not defined any values of $G(\tau)$ on $X^i_j$ yet, so we can do so in 
	such a way that $G(\tau)^{-1}$ maps $X^j_j \setminus \ran(F(\tau))$ into $X^i_j$. 
	Hence, (c) is satisfied. 
	Now extend the definition of $G(\tau)$ injectively to the rest of $X^i_j$, 
	mapping to values not already taken (the remaining values of $G(\tau)$ on 
	$X^i_j$ besides those specified above must be in $X^j_i$, since all of the 
	other values are already taken, although that fact does not matter for us).

	This completes the definition of $G$. 
	It remains to prove (IV) and (V).
		
	(V) Let $\gamma$ and $\tau$ be distinct elements of $\dom(F)$, 
	and assume that $x \in \dom(G(\gamma)) \cap \dom(G(\tau))$ 
	and $G(\gamma)(x) = G(\tau)(x)$. 
	We prove that there exists some $z \in U$ 
	such that $x \le_U z$ and $F(\gamma)(z) = F(\tau)(z)$. 
	If $x \in \dom(F(\gamma)) \cap \dom(F(\tau))$, then we are done by letting $z = x$. 
	Otherwise, we can assume without loss of generality that $x \notin \dom(F(\gamma))$. 
	Then by (II) and (III), $\gamma \in A$ and both $x$ and $G(\gamma)(x)$ are in $Y$. 
	Fix $i, j < q$ such that $x \in \ISucc_U(a_i)$ and $G(\gamma)(x) \in \ISucc_U(a_j)$.
	
	If $\tau \in A$, then by Lemma 4.4 the equation 
	$G(\tau)(x) = G(\gamma)(x)$ contradicts the fact that 
	$\{ G(\xi) : \xi \in A \}$ is separated on $Y$. 
	So $\tau \notin A$. 
	By (II), $G(\tau) = F(\tau)$, so $G(\gamma)(x) = F(\tau)(x)$. 
	Consequently, $x$ and $G(\gamma)(x)$ are in $T$. 
	By the choice of the partitions, 
	$x \in X^i_i$ and $G(\gamma)(x) \in X^j_j$. 
	Since $x \notin \dom(F(\gamma))$, (b) implies that $G(\gamma)(x)$ 
	is in $X^j_i$, which is a contradiction since $X^j_j$ and $X^j_i$ are disjoint.  
	This completes the proof of (V).

	(IV) We now begin the proof that $G$ is $\rho$-separated on $U_\beta$. 
	We need the following three claims. 
	
	\underline{Claim 1:} Let $i, j < q$, $x \in \ISucc_T(a_i)$, 
	$y \in \ISucc_U(a_j) \setminus T$, $\tau \in A$, and $m \in \{ -1, 1 \}$. 
	Suppose that $G(\tau)^m(x) = y$. 
	Then $y \in X^j_i$. 

	\emph{Proof.} Since $y \notin T$, $x \notin \dom(F(\tau)^m)$. 
	On the other hand, $x \in T$, so $x \in X^i_i \setminus \dom(F(\tau)^m)$. 
	By (b) and (c), $y \in X^j_i$. \qedsymbol

	\underline{Claim 2:} Let $i, j, k < q$ and let $\tau$ and $\sigma$ be in $\dom(F)$. 
	Suppose that $x, y, z \in U$ are distinct, 
	$x \in \ISucc_T(a_i)$, $y \in \ISucc_U(a_j) \setminus T$, $z \in \ISucc_U(a_k)$, 
	$m, n \in \{ -1, 1 \}$, 
	$G(\tau)^m(x) = y$, and $G(\sigma)^n(y) = z$. 
	Then $z \notin T$. 

	\emph{Proof.} Since $y \notin T$, 
	$y \notin \ran(F(\tau)^{m})$ and $y \notin \dom(F(\sigma)^{n})$. 
	By (II), it follows that $\tau \in A$ and $\sigma \in A$. 		
	Applying Claim 1 for $x$ and $y$, we get that $y \in X^j_i$. 
	Suppose for a contradiction that $z \in T$. 
	Applying Claim 1 to $z$ and $y$ and the equation 
	$G(\sigma)^{-n}(z) = y$, we get that $y \in X^j_k$. 
	Consequently, $i = k$. 
	Since $G(\tau)^{-m}(y) = x$ and $G(\sigma)^{n}(y) = z$, it follows that 
	$F(\tau)^{-m}(a_j) = a_i$ and $F(\sigma)^{n}(a_j) = a_i$. 
	As $\{ F(\xi) : \xi \in A \}$ is separated on $X$, 
	$\tau = \sigma$ and $-m = n$. 
	But then $x = z$, which is a contradiction. \qedsymbol

	\underline{Claim 3:} There do not exist tuples 
	$(b_0,\ldots,b_{l-1})$, $(m_0,\ldots,m_{l-2})$, and $(\tau_0,\ldots,\tau_{l-2})$, 
	for some natural number $l \ge 3$, satisfying: 
	\begin{itemize}
	\item for all $j < l$, $b_j \in Y$;
	\item $m_0,\ldots,m_{l-2}$ are in $\{ -1, 1 \}$ and 
	$\tau_0,\ldots,\tau_{l-2}$ are in $A$;
	\item $G(\tau_j)^{m_j}(b_j) = b_{j+1}$ for all $j < l-1$;
	\item $b_0 \in T$, $b_j \notin T$ for all $0 < j < l-1$, and $b_{l-1} \in T$;
	\item $b_0,\ldots,b_{l-2}$ are all distinct;
	\item $b_0$, $b_1$, and $b_2$ are distinct.
	\end{itemize}

	\emph{Proof.} Suppose for a contradiction that such tuples exist. 
	Fix $i_0,\ldots,i_{l-1} < q$ such that 
	$b_j \in \ISucc_U(a_{i_j})$ for all $j < l$. 
	Note that $F(\tau_j)^{m_j}(a_{i_j}) = a_{i_{j+1}}$ for all $j < l-1$. 
	By Claim 2 applied to $b_0$, $b_1$, and $b_2$, we can conclude that $b_2$ is not in $T$. 
	Since $b_{l-1} \in T$, it follows that $l \ge 4$.

	We claim that for all $j < k < l-1$, $i_j \ne i_k$. 
	Otherwise, let $k < l-1$ be least such that for some $j < k$, 
	$i_j = i_k$. 
	Since $F(\tau_{k-1})$ has no fixed points in $X$ and 
	$F(\tau_{k-1})^{m_{k-1}}(a_{i_{k-1}}) = a_{i_k}$, $j < k - 1$. 
	If $j = k - 2$, then we would have that 
	$F(\tau_j)^{-m_{j}}(a_{i_{j+1}}) = a_{i_j}$ and 
	$$
	F(\tau_{j+1})^{m_{j+1}}(a_{i_{j+1}}) = a_{i_{j+2}} = a_{i_{k}} = a_{i_j},
	$$
	which implies by Lemma 4.4 that 
	$\tau_{j} = \tau_{j+1}$ and $-m_j = m_{j+1}$. 
	So $G(\tau_j)^{-m_j}(b_{j+1}) = b_j$ would be equal to 
	$G(\tau_{j+1})^{m_{j+1}}(b_{j+1}) = b_{j+2} = b_k$, which contradicts 
	that $b_j \ne b_k$. 
	Hence, $j < k-2$. 
	But then $\langle a_{i_j},a_{i_{j+1}},\ldots,a_{i_k} \rangle$ is a loop with respect 
	to $\{ F(\xi) : \xi \in A \}$. 
	By Proposition 4.10 (Characterization of $\rho$-Separation), 
	$\{ F(\xi) : \xi \in A \}$ is not $\rho$-separated on $X$, and hence is not 
	separated on $X$, which is a contradiction.

	Since $b_0 \in T$, $b_0 \in X^{i_0}_{i_0}$. 
	But $b_0 \notin \dom(F(\tau_0)^{m_0})$, for otherwise $b_1$ would be in $T$. 
	By (b) and (c) it follows that $b_1 \in X^{i_1}_{i_0}$. 
	Using the fact that $i_0,i_1,\ldots,i_{l-2}$ are all distinct, 
	it follows by induction using (a) that for all $k < l-1$, 
	$b_k \in X^{i_k}_{i_0}$. 
	In particular, $b_{l-2} \in X^{i_{l-2}}_{i_0}$. 
	By Claim 1 (letting $x = b_{l-1}$ and $y = b_{l-2}$), 
	$b_{l-2} \in X^{i_{l-2}}_{i_{l-1}}$. 
	It follows that $i_{l-1} = i_0$.
	So $a_{i_{l-1}} = a_{i_0}$. 
	Since $a_0,\ldots,a_{i_{l-2}}$ are distinct and $l \ge 4$, 
	$\langle a_{i_0},\ldots,a_{i_{l-2}},a_{i_{l-1}} \rangle$ 
	is a loop with respect to $\{ F(\xi) : \xi \in A \}$. 
	By Proposition 4.10 (Characterization of $\rho$-Separation), 
	$\{ F(\xi) : \xi \in A \}$ is not $\rho$-separated on $X$, and hence is not 
	separated on $X$, which is a contradiction. \qedsymbol

	Now we are ready to prove that $G$ is $\rho$-separated on $U_\beta$. 
	We verify properties (1) and (2) of 
	Proposition 4.10 (Characterization of $\rho$-Separation). 
	(1) Since $(U,F) \in \p$, $F$ is $\rho$-separated on $U_\beta$. 
	Suppose that $c, d \in U_\beta$, 
	$(m,\tau)$ and $(n,\sigma)$ are distinct elements of 
	$\{ -1, 1 \} \times \dom(G)$, 
	and $G(\tau)^m(c) = d$ and $G(\sigma)^n(c) = d$. 
	We show that $\rho(\tau,\sigma) \ge \beta$. 
	If $c \in \dom(F(\tau)^m)$ and $c \in \dom(F(\sigma)^n)$, then 
	$F(\tau)^m(c) = d$ and $F(\sigma)^n(c) = d$, so $\rho(\tau,\sigma) \ge \beta$ 
	since $F$ is $\rho$-separated on $U_\beta$. 
	Otherwise, without loss of generality $c \notin \dom(F(\tau)^m)$. 
	By (II) and (III), $\tau \in A$ and there are distinct $i, j < q$ such that 
	$c \in \ISucc_U(a_i)$ and $d \in \ISucc_U(a_j)$. 
	If $\sigma \in A$, then by Lemma 4.4 
	we have a contradiction to the fact that 
	$\{ G(\xi): \xi \in A \}$ is separated on $Y$. 
	So $\sigma \notin A$. 
	Hence, $F(\sigma)^n(c) = d$. 
	So $c$ and $d$ are both in $T$, and therefore $c \in X^i_i$ and $d \in X^j_j$. 
	On the other hand, $c \notin \dom(F(\tau)^m)$, so by (b) and (c), 
	$G(\tau)^m(c) = d$ is in $X^j_i$, which contradicts the fact that 
	$X^j_j$ and $X^j_i$ are disjoint.

	(2) Suppose for a contradiction 
	that there exists a loop $\langle b_0,\ldots,b_{k-1} \rangle$ in $U_\beta$ 
	with respect to $G$. 
	So $k \ge 4$, there exist $\tau_0,\ldots,\tau_{k-2}$ in $\dom(G)$ and 
	$m_0,\ldots,m_{k-2}$ in $\{ -1, 1 \}$ such that 
	$b_0 = b_{k-1}$, $\langle b_0,\ldots,b_{k-2} \rangle$ is injective, 
	and for all $i < k-1$, $G(\tau_i)^{m_i}(b_i) = b_{i+1}$. 
	If for all $i < k-1$, $b_i \in \dom(F(\tau_i)^{m_i})$, then we have a 
	contradiction to the fact that $F$ is $\rho$-separated on $U_\beta$. 
	If for all $i < k-1$, $\tau_i \in A$ and $b_i \in Y$, then we have a 
	contradiction to the fact that $\{ G(\xi) : \xi \in A \}$ is separated on $Y$.

	So we may assume that (i) for some $i < k - 1$, $b_i \notin \dom(F(\tau_i)^{m_i})$, 
	and (ii) for some $j < k - 1$, either $\tau_j \notin A$ or $b_j \notin Y$. 
	Note that it follows from (i), (II), and (III) that 
	$b_i$ and $b_{i+1}$ are in $Y$. 
	For (ii), in either case $b_j \in T$ (namely, use (II) if $\tau_j \notin A$ and 
	the fact that $U \setminus T \subseteq Y$ if $b_j \notin Y$). 
	We claim that in (i), either $b_i$ or $b_{i+1}$ is not in $T$. 
	If $b_i \notin T$, then we are done, so assume that $b_i \in T$. 
	Fix $s, t < q$ such that $b_i \in \ISucc_U(a_s)$ and $b_{i+1} \in \ISucc_U(a_t)$. 
	Then $s \ne t$. 
	Since $b_i \in T$, $b_i \in X^s_s \setminus \dom(F(\tau_i)^{m_i})$. 
	By (b) and (c), $b_{i+1} \in X^t_s$, and hence $b_{i+1} \notin T$ since 
	otherwise $b_{i+1}$ would be in $X^t_t$.
	
	In conclusion, some member of the loop is not in $T$ and some member is in $T$. 
	Obviously, this implies that there are adjacent members of the loop where 
	one is in $T$ and the other is not in $T$. 
	By shifting the loop if necessary, we may assume without loss of generality 
	that $b_0 \in T$ and $b_1 \notin T$. 
	Then $b_{k-1} = b_0$ is in $T$. 
	Let $l \le k$ be the least natural number greater than $1$ such that 
	$b_{l-1} \in T$. 
	Since $b_1 \notin T$, by (II) and (III) it follows that $\tau_0 \in A$ and $b_0 \in Y$. 
	Similarly, for all $0 < s < l-1$, $b_s \notin T$ 
	implies by (II) and (III) 
	that $\tau_s \in A$ and both $b_s$ and $b_{s+1}$ are in $Y$. 
	In particular, $b_{l-1} \in Y$. 
	Since $k \ge 4$, $b_0$, $b_1$, and $b_2$ are all different. 
	As $b_1 \notin T$, $l \ge 3$. 
	Using this information, 
	it is easy to check that $(b_0,\ldots,b_{l-1})$, 
	$(\tau_0,\ldots,\tau_{l-2})$, and $(m_0,\ldots,m_{l-2})$ 
	satisfy the description of the tuples which Claim 3 states does not exist, 
	which is a contradiction.
	\end{proof}

\begin{corollary}
	Suppose that $(T,F) \in \p$, 
	$\alpha \in \h[T]$, $X \subseteq T_\alpha$ is non-empty, 
	and $A \subseteq \dom(F)$. 
	Assume that $\{ F(\tau) : \tau \in A \}$ is separated on $X$. 
	Then there exists $(U,G) \in \p$ satisfying:
	\begin{itemize}
		\item $(U,G) \le (T,F)$;
		\item $\h[T] = \h[U]$ and $\dom(F) = \dom(G)$;
		\item $U \setminus T \subseteq \bigcup \{ \Succ_U(x) : x \in X \}$;
		\item for all $\tau \in \dom(G)$ and for all 
		$z \in \dom(G(\tau)) \setminus \dom(F(\tau))$, 
		both $z$ and $G(\tau)(z)$ are in $\bigcup \{ \Succ_U(x) : x \in X \}$;
		\item for all $\tau \in A$ and for all $x, y \in X$, if 
		$x \in \dom(G(\tau))$ and $G(\tau)(x) = y$, 
		then $\Succ_U(x) \subseteq \dom(G(\tau))$ and 
		$\Succ_U(y) \subseteq \ran(G(\tau))$.
	\end{itemize}
\end{corollary}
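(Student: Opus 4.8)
The plan is to deduce the corollary from Proposition 6.2 by applying that proposition repeatedly, working up the finitely many levels of $T$ lying strictly above $\alpha$. First I would dispose of the degenerate case $\alpha = \max(\h[T])$: then no element of $X$ has a successor in any extension of $T$ with the same set of heights, so $(U,G) = (T,F)$ already witnesses all five conclusions, each being vacuous. So from now on assume $\alpha < \max(\h[T])$, and list $\h[T] \setminus \alpha = \{ \alpha_0 < \alpha_1 < \cdots < \alpha_k \}$, where $\alpha_0 = \alpha$, $\alpha_k = \max(\h[T])$, and $k \ge 1$.

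Next I would build, by recursion on $m \le k$, conditions $(T,F) = (U_0,G_0) \ge (U_1,G_1) \ge \cdots \ge (U_k,G_k)$ and sets $X = X_0, X_1, \ldots, X_k$ such that at each stage $\h[U_m] = \h[T]$, $\dom(G_m) = \dom(F)$, $X_m$ is a non-empty subset of $(U_m)_{\alpha_m}$, and $\{ G_m(\tau) : \tau \in A \}$ is separated on $X_m$; note throughout that $A \subseteq \dom(F) = \dom(G_m)$. Given a stage $m < k$, the hypotheses of Proposition 6.2 hold for $(U_m, G_m)$, $\alpha_m$, $X_m$, and $A$: indeed $\alpha_m \in \h[U_m]$ is positive and less than $\max(\h[U_m])$, we have $\min(\h[U_m] \setminus (\alpha_m+1)) = \alpha_{m+1}$, the set $X_m$ is non-empty, and $\{ G_m(\tau) : \tau \in A \}$ is separated on $X_m$. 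Applying Proposition 6.2 produces $(U_{m+1}, G_{m+1}) \le (U_m, G_m)$ with all its listed properties, and I set $X_{m+1} = \bigcup \{ \ISucc_{U_{m+1}}(x) : x \in X_m \}$. The bullet points of Proposition 6.2 then give exactly $\h[U_{m+1}] = \h[T]$, $\dom(G_{m+1}) = \dom(F)$, the inclusion $X_{m+1} \subseteq (U_{m+1})_{\alpha_{m+1}}$ together with non-emptiness of $X_{m+1}$ (the fourth bullet provides non-emptiness of each $\ISucc_{U_{m+1}}(x)$), and $\{ G_{m+1}(\tau) : \tau \in A \}$ separated on $X_{m+1}$ (the sixth bullet), so the recursion continues.

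I would then take $(U,G) = (U_k, G_k)$. Transitivity of $\le$ gives $(U,G) \le (T,F)$, and clearly $\h[U] = \h[T]$ and $\dom(G) = \dom(F)$. The crucial bookkeeping point is that stage $m$ of the recursion introduces new tree nodes and new domain/range pairs only at the single level $\alpha_{m+1}$; hence once stage $m$ is complete the restriction of each $G_j(\tau)$ to the levels at heights $\le \alpha_m$ is frozen, and $\ISucc_{U_{m+1}}(z) = \ISucc_U(z)$ for every $z \in X_m$. From this a straightforward induction on $m$ shows that $X_m$ is precisely the set of $w \in U$ with $\h(w) = \alpha_m$ lying $<_U$-above some element of $X$; in particular $X_m \subseteq \bigcup \{ \Succ_U(x) : x \in X \}$ for $1 \le m \le k$. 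Since every node of $U \setminus T$, and every domain or range element of some $G(\tau)$ not already belonging to $F(\tau)$, was introduced into some $X_{m+1}$, Proposition 6.2's third and fifth bullets yield the third and fourth bullet points of the corollary. For the fifth, fix $\tau \in A$ and $x, y \in X$ with $G(\tau)(x) = y$; as the functions are unchanged at height $\alpha$, this is the same as $F(\tau)(x) = y$, so the seventh bullet of Proposition 6.2 at stage $0$ gives $\ISucc_U(x) \subseteq \dom(G(\tau))$ and $\ISucc_U(y) \subseteq \ran(G(\tau))$, with $G(\tau)$ carrying $\ISucc_U(x)$ onto $\ISucc_U(y)$ and both sets contained in $X_1$; iterating the seventh bullet up the levels at heights $\alpha_1, \ldots, \alpha_{k-1}$ then yields $\Succ_U(x) \subseteq \dom(G(\tau))$ and $\Succ_U(y) \subseteq \ran(G(\tau))$ by induction on the height.

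I do not expect any conceptual obstacle here; the only part demanding care is the bookkeeping just sketched, namely verifying that successive applications of Proposition 6.2 each act on a single fresh level, so that earlier data stays fixed, and confirming that the sets $X_m$ are exactly the level-$\alpha_m$ successors of $X$, which is precisely what lets the immediate-successor conclusions of Proposition 6.2 chain together into the successor conclusions demanded by the corollary.
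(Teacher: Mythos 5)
Your proposal is correct and takes essentially the same approach as the paper's one-sentence proof: a finite induction on the levels of $\h[T]$ above $\alpha$, applying Proposition 6.2 once per level, with the immediate-successor conclusions chaining into the successor conclusions. The bookkeeping you spell out (each stage touching only a single fresh level, $X_m$ being exactly the level-$\alpha_m$ successors of $X$, and $G(\tau)$ carrying $\Succ_U(x) \cap U_{\alpha_m}$ onto $\Succ_U(y) \cap U_{\alpha_m}$) is precisely what the paper leaves to the reader.
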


\begin{proof}
	By induction on the ordinals in $\h[T] \setminus (\alpha+1)$, 
	we can build the desired condition in finitely many steps, where at each step 
	we use Proposition 6.2 to go up one more level.
\end{proof}

\begin{corollary}
	Suppose that $(T,F) \in \p$, 
	$\alpha \in \h[T]$, $X \subseteq T_\alpha$ is non-empty, 
	and $A \subseteq \dom(F)$. 
	Assume that $\{ F(\tau) : \tau \in A \}$ is separated on $X$. 
	Let $\beta = \max(\h[T])$ and let $b \in T_\beta$ be such that $b \res \alpha \in X$. 
	Then there exists $(U,G) \in \p$ and $Y \subseteq U_\beta$ satisfying:
	\begin{enumerate}
		\item $(U,G) \le (T,F)$;
		\item $\h[T] = \h[U]$ and $\dom(F) = \dom(G)$;
		\item $U \setminus T \subseteq \bigcup \{ \Succ_U(x) : x \in X \}$;
		\item for all $\tau \in \dom(G)$ and for all 
		$z \in \dom(G(\tau)) \setminus \dom(F(\tau))$, 
		both $z$ and $G(\tau)(z)$ are in $\bigcup \{ \Succ_U(x) : x \in X \}$;
		\item $Y$ has unique drop-downs to $\alpha$, $Y \res \alpha = X$, and $b \in Y$;	
		\item for all $\tau \in A$, $X$ and $Y$ are $G(\tau)$-consistent.
	\end{enumerate}
\end{corollary}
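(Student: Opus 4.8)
The plan is to derive this from Corollary 6.4 together with Proposition 4.15 ($1$-Key Property), which between them deliver all six conclusions. Assume $\alpha < \beta$; the case $\alpha = \beta$ is trivial, taking $(U,G) = (T,F)$ and $Y = X$ (noting that $b = b \res \alpha \in X$ in that case). First I would apply Corollary 6.4 to $(T,F)$, $\alpha$, $X$, and $A$ — legitimate since $\{F(\tau) : \tau \in A\}$ is separated on $X$ by hypothesis — obtaining a condition $(U,G) \le (T,F)$ with $\h[U] = \h[T]$ (so $\beta = \max(\h[U])$ and $b \in U_\beta$), $\dom(G) = \dom(F)$, $U \setminus T \subseteq \bigcup \{ \Succ_U(x) : x \in X \}$, all new function values lying in $\bigcup \{ \Succ_U(x) : x \in X \}$, and, most importantly, the property that whenever $\tau \in A$, $x, y \in X$, and $G(\tau)(x) = y$, then $\Succ_U(x) \subseteq \dom(G(\tau))$ and $\Succ_U(y) \subseteq \ran(G(\tau))$.

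Next I would check that $(U,G)$, the ordinals $\alpha < \beta$, the set $X$, the index set $A$, and the node $b$ meet the hypotheses of Proposition 4.15. The property displayed above is exactly its fourth bullet. Its separation hypothesis holds because $\{F(\tau) : \tau \in A\}$ is separated on $X$ and, since Corollary 6.4 only introduces function values at levels strictly above $\alpha$, the relations among elements of $X \subseteq U_\alpha$ with respect to $\{G(\tau) : \tau \in A\}$ coincide with those with respect to $\{F(\tau) : \tau \in A\}$; in particular $\{G(\tau) : \tau \in A\}$ is separated on $X$. Likewise $b$ still lies in $U_\beta$ above the node $b \res \alpha \in X$, since $U$ end-extends $T$. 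The delicate point is the normality hypothesis: Proposition 4.15 is stated for normal $T$, but the only use of normality in its proof is to pick, for each element of $X$, some successor at level $\beta$; and the construction underlying Corollary 6.4 iterates Proposition 6.2 up the levels, at each stage adding a nonempty set of immediate successors to every node above $X$, so in $U$ every element of $X$ does reach level $\beta$. Hence the argument of Proposition 4.15 applies to $(U,G)$.

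Then I would invoke Proposition 4.15 with this $b$ to produce a set $Y \subseteq U_\beta$ with unique drop-downs to $\alpha$ such that $Y \res \alpha = X$, $b \in Y$, and $X$ and $Y$ are $G(\tau)$-consistent for every $\tau \in A$. Finally I would read off the conclusion of the corollary: items (1)--(4) are literally the corresponding conclusions of Corollary 6.4, and items (5) and (6) are exactly what Proposition 4.15 delivered.

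The step I expect to require the most care is the verification in the second paragraph: confirming that the output of Corollary 6.4 really supplies every hypothesis of Proposition 4.15 — that separation on $X$ survives the passage from $F$ to $G$, and that enough of the tree above $X$ has been built, all the way up to level $\beta$, for the $1$-Key Property argument to run — all while respecting the constraint imposed by conclusion (3) that no nodes outside $\bigcup \{ \Succ_U(x) : x \in X \}$ are ever added.
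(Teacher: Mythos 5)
Your proof is correct and follows the same route as the paper's one-line argument (Corollary 6.3 followed by Proposition 4.15), though you consistently mislabel the first ingredient as ``Corollary 6.4,'' which is the very statement you are proving. You have also usefully identified and resolved a subtlety the paper elides: Proposition 4.15 is stated for normal trees, but as you observe, its proof only requires that every element of $X$ have some successor at level $\beta$, and the iterated applications of Proposition 6.2 underlying Corollary 6.3 guarantee exactly this.
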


\begin{proof}
	Immediate from Proposition 4.11 ($1$-Key Property) and Corollary 6.3.
\end{proof}

\section{The Generic Tree is Suslin}

We now complete the proof of the main theorem by showing that $\p$ is Knaster and 
forces that $T^{{\dot{G}_{\p}}}$ is Suslin. 
These facts follow from the next theorem (see Corollaries 7.2 and 7.3).

\begin{thm}
	Suppose that $\langle (T^\alpha,F^\alpha) : \alpha < \omega_1 \rangle$ 
	is a sequence of conditions in $\p$ and for each $\alpha < \omega_1$, 
	$x^\alpha \in T^\alpha \setminus \alpha$. 
	Then there exists an uncountable set $Z$ such that for all 
	$\alpha < \beta$ in $Z$, there exists a condition $(W,H)$ 
	such that $(W,H)$ extends $(T^\alpha,F^\alpha)$ and $(T^\beta,F^\beta)$ 
	and $x^\alpha <_W x^\beta$.
\end{thm}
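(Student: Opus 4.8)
The plan is to run a $\Delta$-system argument, powered by the special property of $\rho$, to produce the set $Z$, and then, for a fixed pair $\alpha<\beta$ in $Z$, to build $(W,H)$ by stacking the part of $T^\beta$ lying above a fixed countable ordinal $\delta$ on top of $T^\alpha$.

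\emph{Reductions.} First I would pass to an uncountable subsequence and fix $\delta<\omega_1$ making the data uniform. Using the density lemmas of Section~5 (in particular Lemma~5.7) together with Corollary~6.4, I may assume that each $T^\alpha$ is normal and that the functions we will need to glue are total and surjective on a large enough subtree of $T^\alpha$ containing the branch below $x^\alpha$. Then, by pressing down and repeatedly applying the $\Delta$-system lemma --- the iteration terminating because the trees have bounded finite size --- I may assume that for all $\alpha<\beta$ in the index set: (i)~$T^\alpha\cap(\omega\cdot\delta)=T^\beta\cap(\omega\cdot\delta)=:S$ with the same ordering, and $\h[T^\alpha]\cap\delta=\h[T^\beta]\cap\delta$; (ii)~$\max(\h[T^\alpha])<\min(\h[T^\beta]\setminus\delta)$ and $T^\alpha\cap T^\beta=S$, so the new heights of $T^\alpha$ are stacked strictly below those of $T^\beta$; (iii)~there is a tree isomorphism $\pi_{\alpha\beta}\colon T^\alpha\to T^\beta$ which preserves level-indices, fixes $S$ pointwise, sends $x^\alpha$ to $x^\beta$, and conjugates $F^\alpha(\sigma)$ to $F^\beta(\sigma)$ for each $\sigma$ in the $\Delta$-system root $R_F$ of $\{\dom(F^\alpha)\}$; and (iv)~$\rho(\sigma,\tau)<\delta$ for all $\sigma,\tau\in\dom(F^\alpha)$, so by Lemma~4.9 ``$\rho$-separated'' and ``separated'' coincide on the new levels of each $T^\alpha$. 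Discarding indices below $\omega\cdot\delta$ we also get $x^\alpha\in T^\alpha\setminus S$ with $\h(x^\alpha)\ge\delta$. Finally, applying the special property of $\rho$ to the uncountable family $\{\dom(F^\alpha)\}$ with $\mu=\delta$ yields an uncountable set $Z$ (inside the index set) such that for distinct $\alpha,\beta\in Z$, every $\tau\in\dom(F^\alpha)\setminus R_F$, $\zeta\in\dom(F^\beta)\setminus R_F$ and $\gamma\in R_F$,
$$
\rho(\tau,\zeta)\ \ge\ \max\bigl\{\min\{\rho(\tau,\gamma),\rho(\zeta,\gamma)\},\ \delta\bigr\}.
$$
This $Z$ is the set demanded by the theorem.

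\emph{Amalgamating a pair.} Fix $\alpha<\beta$ in $Z$. Let $W$ have underlying set $T^\alpha\cup T^\beta$, keep the tree orderings of $T^\alpha$ and of $T^\beta$ (which agree on $S$), and attach the portion of $T^\beta$ above $\delta$ on top of $T^\alpha$ thus: for each $b$ in the lowest non-root level of $T^\beta$, fix a branch of $T^\alpha$ running from the root-element of $b$ through $\pi_{\alpha\beta}^{-1}(b)$ up to the top level of $T^\alpha$ (normality of $T^\alpha$ supplies such a branch), insisting that the branch chosen for the dropdown of $x^\beta$ pass through $x^\alpha$; then put $b$ --- and hence all of $T^\beta$ above $b$ --- above that branch in $W$. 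Since distinct $b$'s pass through distinct elements of the lowest non-root level of $T^\alpha$, $W$ is a standard finite tree which is a simple extension of $T^\beta$ and an extension of $T^\alpha$ with $\h[W]\cap(\max(\h[T^\alpha])+1)=\h[T^\alpha]$, and $x^\alpha<_W x^\beta$. Now define $H$ with domain $\dom(F^\alpha)\cup\dom(F^\beta)$: put $H(\sigma)=F^\alpha(\sigma)$ for $\sigma\in\dom(F^\alpha)\setminus R_F$ (a standard function on $W$ by Lemma~3.4), $H(\sigma)=$ the downward closure of $F^\beta(\sigma)$ in $W$ for $\sigma\in\dom(F^\beta)\setminus R_F$ (a standard function by Lemma~3.8, as $W$ is a simple extension of $T^\beta$), and for $\sigma\in R_F$ let $H(\sigma)$ be the common extension of $F^\alpha(\sigma)$ and of the downward closure of $F^\beta(\sigma)$ in $W$; these two pieces agree on their overlap because the totality arranged via Corollary~6.4 and the conjugacy by $\pi_{\alpha\beta}$ force them to. It is then routine, using the structural lemmas of Sections~2 and~3, to check that $H(\sigma)$ extends $F^\alpha(\sigma)$ and $F^\beta(\sigma)$ where required and that Definition~5.1(c) holds over both original conditions, so that $(W,H)\le(T^\alpha,F^\alpha)$ and $(W,H)\le(T^\beta,F^\beta)$.

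\emph{The main obstacle.} The real work lies in verifying Definition~5.1(3) for $(W,H)$, i.e.\ that $H$ is $\rho$-separated on every level of $W$. A structural analysis shows that the only relations $H(\tau)^{m}(c)=H(\zeta)^{n}(c)$ in $W$ that are not inherited from $T^\alpha$ or from $T^\beta$ are ones with $\tau\in\dom(F^\alpha)$ and $\zeta\in\dom(F^\beta)$. Such a relation occurring inside $S$ lies below level $\delta$, and there the $\mu=\delta$ clause of the special property of $\rho$ gives $\rho(\tau,\zeta)\ge\delta$; one occurring above level $\delta$ can be traced through $\pi_{\alpha\beta}$ and the totality of the $R_F$-functions to an agreement of $F^\alpha(\gamma)$ with $F^\alpha(\tau)$ inside $T^\alpha$, and of $F^\beta(\gamma)$ with $F^\beta(\zeta)$ inside $T^\beta$, at the relevant height, for some $\gamma\in R_F$; since the original conditions are $\rho$-separated this forces both $\rho(\tau,\gamma)$ and $\rho(\zeta,\gamma)$ to be at least that height, and the $\min$-clause of the special property then delivers $\rho(\tau,\zeta)$ at least that height as well. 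Finally Proposition~4.14 (Characterization of $\rho$-Separation) is invoked to rule out loops in $W$, which completes the verification that $(W,H)\in\p$ and finishes the proof.
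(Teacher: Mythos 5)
Your overall strategy (thin via a $\Delta$-system, invoke the special property of $\rho$ with $\mu$ at or above the relevant height, stack $T^\beta$ above $T^\alpha$, glue the functions, and verify $\rho$-separation via Proposition~4.14) is the same as the paper's, but there is a genuine gap in how you build $W$. You place each $b$ in the lowest new level of $T^\beta$ above an essentially arbitrary branch of $T^\alpha$ through $\pi_{\alpha\beta}^{-1}(b)$, relying only on normality. But the branches must be chosen compatibly with the root functions $F^\alpha(\sigma)$, $\sigma \in A = R_F$: if $F^\beta(\sigma)(b_1) = b_2$ and the chosen top-level elements of $T^\alpha$ below $b_1, b_2$ are $z_1, z_2$, then the downward closure of $F^\beta(\sigma)$ in $W$ gives $\bar F^\beta(\sigma)(z_1) = z_2$, so you need $F^\alpha(\sigma)(z_1) = z_2$ whenever that left-hand side is defined, or $H(\sigma)$ fails to be a function. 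The paper handles this by restricting attention to the orbit $X_\alpha$ of $\{ x^\alpha \res \alpha \}$ under the $A$-functions, applying Corollary~6.4 (the $1$-Key Property) to produce a top-level set $X_\alpha^+$ above $X_\alpha$ which is $G(\tau)$-consistent for $\tau \in A$ and contains a designated $z^\alpha$ above $x^\alpha$, and then (crucially) attaching \emph{fresh chains} $C_y$ outside $T^\alpha$ for every $y \in T^\beta_\beta \setminus X_\beta$. Those fresh chains lie outside the domain and range of every $F^\alpha(\tau)$, so the elements of $T^\beta_\beta$ not in the orbit cannot create any conflict with the $\alpha$-side functions. Your proposal has no analogue of $X_\alpha$, no fresh chains, and hence no control over where the non-orbit elements of $T^\beta_\beta$ land; an element of $T^\beta_\beta \setminus X_\beta$ placed over an arbitrary branch can end up above a point that is in $\dom(F^\alpha(\gamma))$ for $\gamma \in A$, and then $\bar F^\beta(\gamma)$ and $F^\alpha(\gamma)$ can disagree there.

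There is also a mis-ordering: you apply Corollary~6.4 ``in the reductions,'' before the $\Delta$-system root $R_F = A$ and the pair $\alpha < \beta$ are fixed. But the set $X$ to which Corollary~6.4 is applied is the orbit under $\{ F^\alpha(\tau) : \tau \in A \}$, which is not known until the root $A$ is determined, and applying Corollary~6.4 extends the conditions, which would disturb the uniformity obtained by thinning. In the paper the order is: thin first (so $A$ and the tree isomorphisms are fixed), then for a fixed pair form $X_\alpha$, add the chains $C_y$ to get $(T^\alpha)^+$, and only then apply Corollary~6.4 to $((T^\alpha)^+, F^\alpha)$ with $X = X_\alpha$. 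Additionally, your ``total and surjective on a large enough subtree'' overstates what Corollary~6.4 gives: it gives totality and surjectivity on $\Succ_U(x)$ for related $x, y \in X$, together with a consistent $Y$; it does not make the functions total on the whole tree above $X$. Finally, the claim that cross-relations occurring strictly above $\delta$ can always be traced via a single $\gamma \in R_F$ is exactly what the paper's Claim~4 establishes, and that argument uses the fact that $X_\alpha$ is the closure under the $A$-functions, so it does not carry over to your setting where no such orbit is singled out.
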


\begin{proof}
	By extending further if necessary using Lemmas 5.4, 5.7, and 5.11, 
	we may assume that for all $\alpha < \omega_1$, 
	$\alpha \in \h[T^\alpha]$, $T^\alpha$ is normal, and $0 \in \dom(F^\alpha)$ 
	(the purpose of the last assumption is to ensure that different $F^\alpha$'s have 
	some common element in their domains). 
	We abbreviate the tree ordering on $T^\alpha$ as $<_\alpha$.

	By a standard thinning out argument, 
	we can fix sets $Z_0$, $A$, $T$, and functions $f_{\alpha,\beta}$ 
	and $g_{\alpha,\beta}$ for all $\alpha < \beta$ in $Z_0$ satisfying:
	\begin{enumerate}
	\item $Z_0$ is an uncountable subset of the set of $\xi < \omega_1$ such that 
	$\omega \cdot \xi = \xi$;
	\item $A \subseteq \omega_2$ is finite;
	\item $T$ is a standard finite tree;
	\item for all $\alpha < \beta$ in $Z_0$:
	\begin{enumerate}
		\item $T^\alpha \res \alpha = T^\beta \res \beta = T$ and 
		$T^\alpha \subseteq \beta$;
		\item $f_{\alpha,\beta}$ is an isomorphism from 
		$(T^\alpha,<_\alpha)$ to $(T^\beta,<_\beta)$ which is the identity function on $T$;
		\item $f_{\alpha,\beta}(x^\alpha) = x^\beta$;
		\item $\dom(F^\alpha) \cap \dom(F^\beta) = A$;
		\item $g_{\alpha,\beta} : \dom(F^\alpha) \to \dom(F^\beta)$ is a bijection 
		which is the identity function on $A$;
		\item for all $\tau \in \dom(F^\alpha)$, $m \in \{ -1, 1 \}$, 
		and $x \in T^\alpha$, 
		$$
		x \in \dom(F^\alpha(\tau)^m) \ \Longleftrightarrow \ 
		f_{\alpha,\beta}(x) \in \dom(F^\beta(g_{\alpha,\beta}(\tau))^{m}),
		$$
		and in that case, 
		$$
		f_{\alpha,\beta}(F^\alpha(\tau)^m(x)) = 
		F^\beta(g_{\alpha,\beta}(\tau))^m(f_{\alpha,\beta}(x)).
		$$
	\end{enumerate}
	\end{enumerate}

	Note that by property 4f, 
	for all $\tau \in \dom(F^\alpha)$, 
	$F^\alpha(\tau) \res T = F^\beta(g_{\alpha,\beta}(\tau)) \res T$, and in particular, 
	for all $\tau \in A$, 
	$F^\alpha(\tau) \res T = F^\beta(\tau) \res T$. 
	Let $f_{\beta,\alpha}$ and $g_{\beta,\alpha}$ denote the inverse functions 
	of $f_{\alpha,\beta}$ and $g_{\alpha,\beta}$ respectively. 
	These two inverse functions satisfy properties 4b, 4c, 4e, and 4f 
	when $\alpha$ and $\beta$ are switched.

	By the special property of $\rho$, fix an uncountable set $Z \subseteq Z_0$ 
	such that for all $\alpha < \beta$ in $Z$, $\alpha$ is greater than 
	$\max \{ \rho(\nu,\xi) : \nu, \xi \in A \}$, and for all 
	$\zeta \in \dom(F^\alpha) \setminus \dom(F^\beta)$, 
	for all $\tau \in \dom(F^\beta) \setminus \dom(F^\alpha)$, and 
	for all $\gamma \in \dom(F^\alpha) \cap \dom(F^\beta) = A$, 
	$$
	\rho(\zeta,\tau) \ge 
	\max \{ \min \{ \rho(\zeta,\gamma), \rho(\tau,\gamma) \}, \max(\h[T]) \}.
	$$
	Fix $\alpha < \beta$ in $Z$. 
	Note that since $0 \in \dom(F^\alpha) \cap \dom(F^\beta)$, 
	for all $\zeta$ and $\tau$ as above, $\rho(\zeta,\tau) \ge \max(\h[T])$. 
	The goal for the rest of the proof 
	is to find a condition $(W,H)$ which extends the conditions 
	$(T^\alpha,F^\alpha)$ and $(T^\beta,F^\beta)$ such that $x^\alpha <_W x^\beta$.
	
	\underline{Claim 1:} $f_{\alpha,\beta}(x^\alpha \res \alpha) = x^\beta \res \beta$.
	 
	\emph{Proof.} Since $f_{\alpha,\beta}$ is an isomorphism, 
	$x^\alpha \res \alpha \le_\alpha x^\alpha$ 
	implies that $f_{\alpha,\beta}(x^\alpha \res \alpha) \le_\beta 
	f_{\alpha,\beta}(x^\alpha) = x^\beta$. 
	Since $f_{\alpha,\beta}$ maps level $\alpha$ of $T^\alpha$ 
	onto level $\beta$ of $T^\beta$, it follows that 
	$f_{\alpha,\beta}(x^\alpha \res \alpha) = x^\beta \res \beta$. \qedsymbol

 	Let $X_\alpha$ be the closure of the singleton $\{ x^\alpha \res \alpha \}$ under 
	the functions in $\{ F^\alpha(\tau)^m : \tau \in A, \ m \in \{ -1, 1 \} \}$, and let 
	$X_\beta$ be the closure of $\{ x^\beta \res \beta \}$ under the functions in 
	$\{ F^\beta(\tau)^m : \tau \in A, \ m \in \{ -1, 1 \} \}$.

	\underline{Claim 2:} There exists an injective sequence 
	$\langle z_0,\ldots,z_{n-1} \rangle$ which lists $X_\alpha$ satisfying that 
	for any $l < n$, there exists a decreasing sequence $i_0 > \ldots > i_q$ such that 
	$i_0 = l$, $i_q = 0$, and for all $j < q$, 
	there exists some $(m,\zeta) \in \{ -1, 1 \} \times A$ 
	such that $F^\alpha(\zeta)^m(z_{i_j}) = z_{i_{j+1}}$. 
	Moreover, $X_\beta = \{ f_{\alpha,\beta}(z_i) : i < n \}$, so 
	$f_{\alpha,\beta} \res X_\alpha$ is a bijection between $X_\alpha$ and $X_\beta$.

	\emph{Proof.} We construct the sequence $\langle z_0,\ldots,z_{n-1} \rangle$ by induction. 
	Let $z_0 = x_\alpha \res \alpha$. 
	Now assume that $\langle z_0,\ldots,z_p \rangle$ is defined as required. 
	Apply one at a time the functions in 
	$\{ F^\alpha(\tau)^m : \tau \in A, \ m \in \{ -1, 1 \} \}$ 
	to the members of $\{ z_0,\ldots,z_p \}$ until we obtain a new element not in 
	$\{ z_0,\ldots,z_{p} \}$, 
	which we denote by $z_{p+1}$. 
	If this process does not result in any new element, then we are done and in that case 
	$X_\alpha = \{ z_0,\ldots,z_p \}$. 
	This completes the construction. 
	The first property described of this sequence can be easily proven by induction, and 
	the statement about $X_\beta$ follows from Claim 1 
	and property 4f. \qedsymbol

	\underline{Claim 3:} If $a$ and $b$ are distinct elements of $X_\alpha$, then 
	there exists an injective sequence $\langle x_0,\ldots,x_{k-1} \rangle$ 
	consisting of elements of $X_\alpha$ such that $k \ge 2$, $x_0 = a$, $x_{k-1} = b$, and 
	for all $i < k-1$ there exists some $(m,\tau) \in \{ -1, 1 \} \times A$ 
	such that $F^\alpha(\tau)^{m}(x_i) = x_{i+1}$.

	\emph{Proof.} Fix a sequence $\langle z_0,\ldots,z_{n-1} \rangle$ 
	as described in Claim 2. 
	Fix $l_a, l_b < n$ such that $a = z_{l_a}$ and $b = z_{l_b}$. 
	For each $c \in \{ a, b \}$ fix a decreasing sequence 
	$i^c_0 > \ldots > i^c_{q_c}$ such that 
	$i^c_0 = l_c$, $i^c_{q_c} = 0$, and for all $j < q_c$, 
	there exists some $(m,\zeta) \in \{-1,1\} \times A$ 
	such that $F^\alpha(\zeta)^m(z_{i^c_j}) = z_{i^c_{j+1}}$.
	
	Now consider the concatenation of the sequence 
	$\langle z_{i^a_0},\ldots,z_{i^a_{q_a}} \rangle$ 
	with the reverse of the sequence 
	$\langle z_{i^b_0},\ldots,z_{i^b_{q_b-1}} \rangle$. 
	This sequence starts at $a$, ends at $b$, and each of its elements has some 
	relation to its adjacent elements with respect to $\{ F^\alpha(\tau) : \tau \in A \}$. 
	So we are done provided that this concatenated sequence is injective. 
	If it is not, then we adjust it by deleting repetitions one at a time 
	going from left to right: 
	each time we encounter a subsequence of our current sequence of 
	the form $\langle d, \ldots, d \rangle$, 
	chosen as big as possible so that the last member of this subsequence 
	is the last occurrence of $d$ in the current sequence, 
	remove the elements of this subsequence after its first element. 
	Continue in this manner moving from left to right until 
	all repetitions are deleted and we obtain 
	an injective sequence. 
	Note that after every step of this process, the adjusted sequence still starts with $a$ 
	and ends with $b$, and adjacent elements are still related as required.  \qed
	
	\underline{Claim 4:} If $a$ and $b$ are distinct elements of $X_\alpha$ and there exists 
	some $\tau \in \dom(F^\alpha) \setminus A$ and $m \in \{ -1, 1 \}$ 
	such that $F^\alpha(\tau)^m(a) = b$, then there exists some 
	$\sigma \in A$ and $l \in \{ -1, 1 \}$ such that 
	$F^\alpha(\sigma)^l(a) = b$.
	
	\emph{Proof.} Apply Claim 3 to fix an 
	injective sequence $\langle x_0,\ldots,x_{k-1} \rangle$ 
	consisting of elements of $X_\alpha$ such that 
	$k \ge 2$, $x_0 = a$, $x_{k-1} = b$, and 
	for all $i < k-1$ there exists some $(m,\tau) \in \{ -1, 1 \} \times A$ 
	such that $F^\alpha(\tau)^{m}(x_i) = x_{i+1}$. 
	Assume that for some $\tau \in \dom(F^\alpha) \setminus A$ and $m \in \{ -1, 1 \}$, 
	$F^\alpha(\tau)^m(a) = b$. 
	If $k > 2$, then $\langle x_0,\ldots,x_{k-1}, x_0 \rangle$ is a loop, 
	contradicting that $F^\alpha(\tau)$ is 
	$\rho$-separated on $T^\alpha_\alpha$. 
	Hence, $k = 2$, so $a = x_0$ and $b = x_1$, 
	which easily implies the conclusion of the claim. \qedsymbol

	To prepare for the amalgamation of the conditions $(T^\alpha,F^\alpha)$ and 
	$(T^\beta,F^\beta)$, we make two preliminary steps. 
	First, we extend the tree $T^\alpha$ to a tree $(T^\alpha)^+$, and secondly, we 
	extend the condition $((T^\alpha)^+,F^\alpha)$ to a condition $(U,G)$. 
	For the first step, 
	for each $y \in T^\beta_\beta \setminus X_\beta$ we add to $T^\alpha$ a chain $C_y$ 
	above $y \res \max(\h[T])$, disjoint from $T^\alpha$, consisting of elements of 
	every possible height in $\h[T^\alpha] \setminus \alpha$. 
	Moreover, we arrange that any two such chains are disjoint. 
	Let $(T^\alpha)^+$ be the tree thus formed. 
	It is routine to check that $(T^\alpha)^+$ is a standard finite tree which is 
	normal and satisfies that $\h[(T^\alpha)^+] = \h[T^\alpha]$. 
	By Lemma 5.2, 
	$((T^\alpha)^+,F^\alpha)$ is a condition extending $(T^\alpha,F^\alpha)$. 
	Note that $(T^\alpha)^+ \res \alpha = T$ and 
	for all $\tau \in \dom(F^\alpha)$, the domain and range of 
	$F^\alpha(\tau)$ is disjoint from $(T^\alpha)^+ \setminus T^\alpha$.

	Applying the fact that $T^\alpha$ is normal, 
	fix an element $z^\alpha$ 
	on the top level of $T^\alpha$ such that $x^\alpha \le_\alpha z^\alpha$. 
	Since $\{ \rho(\nu,\xi) : \nu, \xi \in A \} \subseteq \alpha$, 
	it follows by Lemma 4.7(1) 
	that $\{ F^\alpha(\tau) : \tau \in A \}$ is separated on $X_\alpha$. 
	Applying Corollary 6.4, fix $(U,G) \in \p$ and 
	$X_\alpha^+ \subseteq U_{\max(\h[T^\alpha])}$ 
	satisfying:
	\begin{itemize}
		\item $(U,G) \le ((T^\alpha)^+,F^\alpha)$;
		\item $\h[T^\alpha] = \h[U]$ and $\dom(F^\alpha) = \dom(G)$;
		\item $U \setminus (T^\alpha)^+ \subseteq \bigcup \{ \Succ_U(x) : x \in X_\alpha \}$;
		\item for all $\tau \in \dom(G)$ and for all 
		$z \in \dom(G(\tau)) \setminus \dom(F^\alpha(\tau))$, both $z$ and 
		$G(\tau)(z)$ are in $\bigcup \{ \Succ_U(x) : x \in X_\alpha \}$;
		\item $X_\alpha^+$ has unique drop-downs to $\alpha$, $X_\alpha^+ \res \alpha = X_\alpha$, 
		and $z^\alpha \in X_\alpha^+$;	
		\item for all $\tau \in A$, $X_\alpha$ and $X_\alpha^+$ are $G(\tau)$-consistent.
	\end{itemize}
	Note that $U \res \alpha = T$ 
	and for all $\tau \in \dom(G)$, $G(\tau) \res T = F^\alpha(\tau) \res T$.

	To deal with the complexities of what follows, we split 
	the elements of $U \setminus \alpha$ into three disjoint sets. 
	Let $\mathcal{C}$ denote the set of elements belonging to some chain $C_y$, 
	where $y \in T^\beta_\beta \setminus X_\beta$; 
	in other words, $\mathcal{C} = (T^\alpha)^+ \setminus T^\alpha$. 
	Observe that for all $\tau \in \dom(G)$, the domain and range of 
	$G(\tau)$ is disjoint from $\mathcal{C}$. 
	Let $\mathcal{S}$ denote the set of $z \in U \setminus \alpha$ such that 
	for some $y \in X_\alpha^+$, $z = y \res \delta$ 
	for some $\delta \in \h[U] \setminus \alpha$. 
	In other words, 
	$\mathcal{S} = 
	\bigcup \{ X_\alpha^+ \res \delta : \delta \in \h[U] \setminus \alpha \}$. 
	Finally, let $\mathcal{D}$ denote the set of elements in 
	$U \setminus \alpha$ which are not in $\mathcal{C}$ or $\mathcal{S}$.

	Recall that $X_\alpha^+$ has unique drop-downs to $\alpha$, 
	$X_\alpha^+ \res \alpha = X_\alpha$, and 
	for all $\tau \in A$, $X_\alpha$ and $X_\alpha^+$ are $G(\tau)$-consistent. 
	It follows by Lemma 4.2(1) that for all $\delta \in \h[U] \setminus \alpha$, 
	$\mathcal{S} \cap U_\delta = X_\alpha^+ \res \delta$ 
	has unique drop-downs to $\alpha$, 
	$(\mathcal{S} \cap U_\delta) \res \alpha = X_\alpha$, and for all 
	$\tau \in A$, $X_\alpha$ and $\mathcal{S} \cap U_\delta$ are $G(\tau)$-consistent.

	\underline{Claim 5:} If $a$ and $b$ are distinct elements 
	of $\mathcal S$ of the same height, 
	then there exists an injective sequence $\langle y_0,\ldots,y_{k-1} \rangle$ 
	consisting of elements of $\mathcal S$ such that 
	$k \ge 2$, $y_0 = a$, $y_{k-1} = b$, and 
	for all $i < k-1$ there exists some $(m,\tau) \in \{ -1, 1 \} \times A$ 
	such that $G(\tau)^{m}(y_i) = y_{i+1}$.

	\emph{Proof.} Let $\delta$ be the height of $a$ and $b$. 
	Apply Claim 3 to $a \res \alpha$ and $b \res \alpha$ 
	to obtain an injective sequence 
	$\langle x_0,\ldots,x_{k-1} \rangle$ consisting 
	of elements of $X_\alpha$ such that 
	$k \ge 2$, $x_0 = a \res \alpha$, $y_{k-1} = b \res \alpha$, and 
	for all $i < k-1$ there exists some $(m,\tau) \in \{ -1 , 1 \} \times A$ 
	such that $F^\alpha(\tau)^{m}(x_i) = x_{i+1}$, that is, 
	$G(\tau)^m(x_i) = x_{i+1}$. 
	For each $0 < i < k$, let $y_i$ be the unique element of 
	$\mathcal S \cap U_\delta$ above $x_i$. 
	Note that $y_0 = a$ and $y_{k-1} = b$. 
	For all $\tau \in A$, $X_\alpha$ and $\mathcal{S} \cap U_\delta$ 
	are $G(\tau)$-consistent, so the desired conclusion clearly holds. \qedsymbol

	Now we begin the definition of the condition $(W,H)$. 
	This is done in two steps: first we construct $W$, 
	and then we construct $H$.
	
	\emph{Step 1:} Constructing $W$.
	
	We amalgamate the trees $U$ and $T^\beta$ 
	into a tree $W$ with underlying set $U \cup T^\beta$ as follows. 
	Since $U \res \alpha = T^\beta \res \beta = T$, 
	it suffices to specify for each $y \in T^\beta_\beta$ 
	an immediate predecessor $y^-$ of $y$ in $U_{\max(\h[U])}$ such that 
	$y \res_{T^\beta} \max(\h[T]) = y^{-} \res_U \max(\h[T])$. 
	Moreover, we can ensure that $W$ a simple extension of $T^\beta$ by arranging that 
	the function $y \mapsto y^- \res \alpha$ is injective. 
	For each $y \in T^\beta_\beta \setminus X_\beta$, let $y^-$ be the 
	top element of the chain $C_y$. 
	For each $z \in X_\beta$, let $z^-$ be 
	the unique element of $X_\alpha^+$ which is above $f_{\beta,\alpha}(z)$ in $U$. 
	It is easy to check that this works.

	Note that by definition, for all $y \in X_\beta$, 
	$y \res_W \alpha = f_{\beta,\alpha}(y)$. 
	Now $z^\alpha$ is the unique element of $X_\alpha^+$ which is 
	greater than or equal to $x^\alpha \res \alpha$ in $U$. 
	By Claim 1, 
	$x^\alpha \res \alpha = f_{\beta,\alpha}(x^\beta \res \beta)$. 
	So $(x^\beta \res \beta)^- = z^\alpha$. 
	Therefore, $x^\alpha <_W x^\beta$ as desired.
	
	Note that the elements of $\mathcal{D}$ do not have anything above them in 
	$W \setminus \beta$. 
	For each $z \in \mathcal{C}$, $z$ is in the chain $C_y$ for some 
	$y \in T^\beta_\beta \setminus X_\beta$, and $z <_W y$ by definition. 
	For each $z \in \mathcal S$, $z <_W f_{\alpha,\beta}(z \res \alpha)$. 
	For every $z \in \mathcal C \cup \mathcal S$, let $z^+$ be the unique member 
	of $W_\beta$ above $z$. 
	Note that $z \in \mathcal C$ iff $z^+ \in W_\beta \setminus X_\beta$, and 
	$z \in \mathcal S$ iff $z^+ \in X_\beta$. 
	Also, if $z \in \mathcal S$ 
	then $z \res \alpha = f_{\beta,\alpha}(z^+)$, that is, 
	$f_{\alpha,\beta}(z \res \alpha) = z^+$.

	For each $\tau \in \dom(F^\beta)$, let $\bar{F}^\beta(\tau)$ be 
	the downward closure of $F^\beta(\tau)$ in $W$. 
	Since $W$ is a simple extension of $T^\beta$, it follows by Lemma 3.8 that 
	$\bar{F}^\beta(\tau)$ is a standard function on $W$ 
	and $\bar{F}^\beta(\tau) \res T^\beta = F^\beta(\tau)$. 
	In particular, $\bar{F}^\beta(\tau) \res T = F^\beta(\tau) \res T$. 
	Observe that the domain and range of $\bar{F}^\beta(\tau)$ 
	is disjoint from $\mathcal D$. 
	Also, 
	$$
	\forall x, y \in \mathcal C \cup \mathcal S \ 
	(\bar{F}^\beta(\tau)(x) = y \ \Longleftrightarrow \ F^\beta(\tau)(x^+) = y^+).
	$$

	\underline{Claim 6:} For all $\tau \in A$, 
	$\bar{F}^\beta(\tau) \res T = G(\tau) \res T$. 
	
	\emph{Proof.} We have that 
	$\bar{F}^\beta(\tau) \res T = F^\beta(\tau) \res T = 
	F^\alpha(\tau) \res T = G(\tau) \res T$ (where property 4f is used for the 
	second equality). \qedsymbol

	\underline{Claim 7:} For all $(m,\tau) \in \{ -1,1 \} \times A$, 
	$\mathcal{C}$ and $\mathcal{S}$ 
	are both closed under $\bar{F}^\beta(\tau)^m$.

	\emph{Proof.} Assume that $\bar{F}^\beta(\tau)^m(x) = y$, where 
	$x \in \mathcal{C} \cup \mathcal{S}$. 
	Since the domain and range of $\bar{F}^\beta(\tau)$ are disjoint from $\mathcal D$, 
	$y \in \mathcal C \cup \mathcal S$. 
	So $F^\beta(\tau)^m(x^+) = y^+$ by the above.
	If $x \in \mathcal S$, then $x^+ \in X_\beta$. 
	Since $X_\beta$ is closed under $F^\beta(\tau)^m$, $y^+ \in X_\beta$, 
	and hence $y \in \mathcal S$. 
	If $x \in \mathcal C$, then $x^+ \in W_\beta \setminus X_\beta$. 
	But $X_\beta$ is closed under $F^\beta(\tau)^{-m}$, 
	so $y^+ \in W_\beta \setminus X_\beta$. 
	Thus, $y \in \mathcal C$. \qedsymbol

	\underline{Claim 8:} For all $(m,\tau) \in \{ -1, 1 \} \times A$, 
	$\bar{F}^\beta(\tau)^m \res \mathcal{S} = G(\tau)^m \res \mathcal{S}$.

	\emph{Proof.} Suppose that $x \in \mathcal S$ and $\bar{F}^\beta(\tau)^m(x) = y$. 
	Then $y \in \mathcal S$ by Claim 7, $y^+ \in X_\beta$, and 
	$F^\beta(\tau)^m(x^+) = y^+$. 
	We also have that $y \res \alpha = f_{\beta,\alpha}(y^+) = 
	f_{\beta,\alpha}(F^\beta(\tau)^m(x^+)) = 
	F^\alpha(\tau)^m(f_{\beta,\alpha}(x^+)) = F^\alpha(\tau)^m(x \res \alpha) = 
	G(\tau)^m(x \res \alpha)$. 
	By $G(\tau)$-consistency, it follows that $G(\tau)^m(x) = y$. 

	On the other hand, assume that $x \in \mathcal S$ and $G(\tau)^m(x) = y$. 
	Then $F^\alpha(\tau)^m(x \res \alpha) = y \res \alpha$. 
	Since $X_\alpha$ is closed under $F^\alpha(\tau)^m$, $y \res \alpha \in X_\alpha$. 
	Let $y'$ be the unique element of $\mathcal S$ 
	above $y \res \alpha$ with the same height as $x$. 
	Then by $G(\tau)$-consistency, $G(\tau)^m(x) = y'$. 
	Hence, $y = y'$ and so $y \in \mathcal S$. 
	By property 4f, 
	$y^+ = f_{\alpha,\beta}(y \res \alpha) = 
	f_{\alpha,\beta}(F^\alpha(\tau)^m(x \res \alpha)) = 
	F^\beta(\tau)^m(f_{\alpha,\beta}(x \res \alpha)) = 
	F^\beta(\tau)^m(x^+)$. 
	Hence, $y = \bar{F}^\beta(\tau)^m(x)$. \qedsymbol
	
	The following two claims record for future reference observations which we 
	already made above.
	
	\underline{Claim 9:} For all $\tau \in \dom(G)$, the domain and range of $G(\tau)$ 
	are disjoint from $\mathcal C$.
	
	\underline{Claim 10:} For all $\tau \in \dom(F^\beta)$, the domain and range 
	of $\bar{F}^\beta(\tau)$ are disjoint from $\mathcal D$.

	By Lemma 5.2, $(W,G) \in \p$ and $(W,G) \le (U,G)$. 
	By Lemma 5.3, $(W,\bar F^\beta) \in \p$ and $(W,\bar F^\beta) \le (T^\beta,F^\beta)$. 
	So it suffices to construct $H$ so that 
	$(W,H) \in \p$ and $(W,H)$ extends both $(W,G)$ and $(W,\bar F^\beta)$.

	\emph{Step 2:} Constructing $H$.

	Let the domain of $H$ be equal to $\dom(G) \cup \dom(F^\beta)$. 
	If $\tau \in \dom(G) \setminus \dom(F^\beta)$, then let $H(\tau) = G(\tau)$. 
	If $\tau \in \dom(F^\beta) \setminus \dom(G)$, then let 
	$H(\tau) = \bar{F}^\beta(\tau)$. 
	Now suppose that $\tau \in \dom(G) \cap \dom(F^\beta)$. 
	Then $\tau \in \dom(F^\alpha) \cap \dom(F^\beta) = A$. 
	We claim that for all $x \in \dom(\bar{F}^\beta(\tau)) \cap \dom(G(\tau))$, 
	$\bar{F}^\beta(\tau)(x) = G(\tau)(x)$. 
	It then easily follows that $H(\tau) = \bar{F}^\beta(\tau) \cup G(\tau)$ 
	is a strictly increasing, level preserving, downwards closed partial 
	function from $W$ to $W$ with no fixed-points other than $0$. 
	So let $x \in \dom(\bar{F}^\beta(\tau)) \cap \dom(G(\tau))$. 
	By Claim 6, we can assume that $x \notin T$. 
	Also, $x \in \dom(G(\tau))$ implies that $x \in U$. 
	By Claims 9 and 10, $x \in \mathcal S$. 
	By Claim 8, $\bar{F}^\beta(\tau)(x) = G(\tau)(x)$.

	Now we show that $H(\tau)$ is injective. 
	Since $\bar{F}^\beta(\tau)$ and $G(\tau)$ are each injective, it is enough to 
	show that for all $x \in \dom(\bar{F}^\beta(\tau)) \setminus \dom(G(\tau))$ 
	and for all $y \in \dom(G(\tau)) \setminus \dom(\bar{F}^\beta(\tau))$, 
	$\bar{F}^\beta(\tau)(x) \ne G(\tau)(y)$. 
	As $\bar{F}^\beta(\tau) \res T = G(\tau) \res T$ by Claim 6 and 
	$H(\tau)$ is level preserving, it suffices 
	to consider $x$ and $y$ which are both 
	in $U_\delta$ for some $\delta \in \h[U] \setminus \alpha$. 
	Suppose for a contradiction that 
	$\bar{F}^\beta(\tau)(x) = z = G(\tau)(y)$. 
	By Claims 9 and 10, $z \in \mathcal S$. 
	By Claim 7, $x$ is in $\mathcal S$. 
	By Claim 8, $x$ is in the domain of $G(\tau)$, which is a contradiction. 
	This completes the proof that $H(\tau)$ is injective and hence is a standard 
	function on $W$.

	We need two more claims.
		
	\underline{Claim 11:} If $a, b \in \mathcal{S}$, $\tau \in \dom(H)$, 
	and $H(\tau)(a) = b$, then there exists some 
	$(m,\zeta) \in \{ -1, 1 \} \times \dom(F^\beta)$ 
	such that $F^\beta(\zeta)^m(a^+) = b^+$.

	\emph{Proof.} The equation $H(\tau)(a) = b$ means that either 
	$\bar{F}^\beta(\tau)(a) = b$ or $G(\tau)(a) = b$. 
	In the former case, $F^\beta(\tau)(a^+) = b^+$ and we are done. 
	Suppose that $G(\tau)(a) = b$. 
	By Claim 8, we may assume that $\tau \notin A$, 
	otherwise we are in the case just considered. 
	Since $G(\tau)(a) = b$, it follows that 
	$G(\tau)(a \res \alpha) = b \res \alpha$, that is, 
	$F^\alpha(\tau)(a \res \alpha) = b \res \alpha$. 
	By Claim 4, there exists some $(m,\xi) \in \{ -1, 1 \} \times A$ 
	such that $F^\alpha(\xi)^m(a \res \alpha) = b \res \alpha$. 
	By property 4f, 
	$F^\beta(\xi)^m(a^+) = F^\beta(\xi)^m(f_{\alpha,\beta}(a \res \alpha)) = 
	f_{\alpha,\beta}(F^\alpha(\xi)^m(a \res \alpha)) = f_{\alpha,\beta}(b \res \alpha) = b^+$.
	\qedsymbol
	
	\underline{Claim 12:} If $a, b \in \mathcal{S}$, $\gamma \in \dom(F^\beta)$, 
	and $\bar{F}^\beta(\gamma)(a) = b$, then there exists some 
	$(m,\xi) \in \{ -1, 1 \} \times \dom(G)$ such that $G(\xi)^m(a) = b$.

	\emph{Proof.} By Claim 8, we are done if $\gamma \in A$. 
	So assume that $\gamma \in \dom(F^\beta) \setminus \dom(G)$. 
	Then neither $\gamma$ nor $g_{\beta,\alpha}(\gamma)$ are in $A$. 
	We have that $F^\beta(\gamma)(a^+) = b^+$, 
	so $F^\alpha(g_{\beta,\alpha}(\gamma))(a \res \alpha) = 
	F^\alpha(g_{\beta,\alpha}(\gamma))(f_{\beta,\alpha}(a^+)) = 
	f_{\beta,\alpha}(F^\beta(\gamma)(a^+)) = 
	f_{\beta,\alpha}(b^+) = b \res \alpha$. 
	By Claim 4, there exists some $(m,\xi) \in \{ -1, 1 \} \times A$ such that 
	$G(\xi)^m(a \res \alpha) = b \res \alpha$. 
	By $G(\xi)$-consistency, $G(\xi)^m(a) = b$. \qedsymbol

	We have now constructed $(W,H)$. 
	To complete the proof, 
	we need to show that $(W,H) \in \p$ and 
	$(W,H)$ extends both $(W,G)$ and $(W,\bar{F}^\beta)$. 
	For proving the former, it suffices to show that 
	for all $\delta \in \h[W]$, 
	$H$ is $\rho$-separated on $W_\delta$. 
	Once we know that $(W,H) \in \p$, 
	in order to prove that $(W,H)$ extends $(W,G)$ and $(W,\bar{F}^\beta)$, 
	it suffices to verify Definition 5.1(c) in each case.
	
	Fix $\delta \in \h[W]$ and we prove that 
	$H$ is $\rho$-separated on $W_\delta$ using Proposition 4.10 
	(Characterization of $\rho$-Separation). 
	Note that if $\delta \ge \beta$, 
	then since $W \setminus \beta = T^\beta \setminus \beta$ 
	and $H$ and $F^\beta$ are the same on $W \setminus \beta$, we are done 
	because $(T^\beta,F^\beta)$ is a condition. 
	So assume that $\delta < \beta$.
	
	\emph{Verifying property (1) of Proposition 4.10 (Characterization of $\rho$-Separation):}
	
	Suppose that $x, y \in W_\delta$  
	and $(m_0,\tau_0)$ and $(m_1,\tau_1)$ 
	are distinct pairs in $\{ -1, 1 \} \times \dom(H)$ satisfying that 
	$H(\tau_0)^{m_0}(x) = y$ and $H(\tau_1)^{m_1}(x) = y$. 
	We prove that $\rho(\tau_0,\tau_1) \ge \delta$. 
	Since $(W,G)$ and $(W,\bar{F}^\beta)$ are conditions, without loss of generality 
	we may assume that $G(\tau_0)^{m_0}(x) = y$ and 
	$\bar{F}^\beta(\tau_1)^{m_1}(x) = y$.

	First, suppose that $\delta < \alpha$. 	
	Then $W_\delta = T^\alpha_\delta$. 
	Recall that $G$ is the same as $F^\alpha$ on $T$ and 
	$\bar{F}^\beta$ is the same as $F^\beta$ on $T$. 
	So $F^\alpha(\tau_0)^{m_0}(x) = y$ and $F^\beta(\tau_1)^{m_1}(x) = y$. 
	If for some $k < 2$, $\tau_k \in A$, then 
	$F^\alpha(\tau_k) \res T = F^\beta(\tau_k) \res T$ so we are done 
	since $(T^\alpha,F^\alpha)$ and $(T^\beta,F^\beta)$ are conditions. 
	So assume that $\tau_0 \in \dom(F^\alpha) \setminus \dom(F^\beta)$ and 
	$\tau_1 \in \dom(F^\beta) \setminus \dom(F^\alpha)$. 
	Then by the choice of $\alpha$ and $\beta$, 
	$\rho(\tau_0,\tau_1) \ge \max(\h[T]) \ge \delta$.

	Secondly, assume that $\alpha \le \delta < \beta$. 
	By Claims 9 and 10, 
	$x$ and $y$ are in $\mathcal S$. 
	By Claim 8, we may assume that neither $\tau_0$ nor $\tau_1$ are in $A$, 
	for otherwise we are done since $(W,G)$ and $(W,\bar{F}^\beta)$ are conditions. 
	So $\tau_0 \in \dom(G) \setminus \dom(F^\beta)$ and 
	$\tau_1 \in \dom(F^\beta) \setminus \dom(G)$. 
	In particular, $\tau_0 \notin A$. 
	As $F^\alpha(\tau_0)^{m_0}(x \res \alpha) = 
	G(\tau_0)^{m_0}(x \res \alpha) = y \res \alpha$, 
	by Claim 4 there exists some $(n,\sigma) \in \{ -1, 1 \} \times A$ 
	such that $G(\sigma)^{n}(x \res \alpha) = y \res \alpha$. 
	By $G(\sigma)$-consistency, $G(\sigma)^n(x) = y$. 
	Since $(W,G)$ is a condition, $\rho(\tau_0,\sigma) \ge \delta$. 
	By Claim 8, $\bar{F}^\beta(\sigma)^{n}(x) = y$. 
	As $(W,\bar{F}^\beta)$ is a condition, 
	$\rho(\tau_1,\sigma) \ge \delta$. 
	Therefore,
	$$
	\rho(\tau_0,\tau_1) \ge 
	\min \{ \rho(\tau_0,\sigma), \rho(\tau_1,\sigma) \} \ge \delta.
	$$

	\emph{Verifying property (2) of Proposition 4.10 (Characterization of $\rho$-Separation):}
 
	Suppose for a contradiction 
	that there exists a loop $\langle a_0,\ldots,a_{n-1} \rangle$ of elements of $W_\delta$ 
	with respect to $H$. 
	So $n \ge 4$, the sequence $\langle a_0,\ldots,a_{n-2} \rangle$ is injective, 
	$a_0 = a_{n-1}$, and for all $i < n-1$ there exists 
	$(m_i,\tau_i) \in \{ -1, 1 \} \times \dom(H)$ such that 
	$H(\tau_i)^{m_i}(a_i) = a_{i+1}$. 
	For each $i < n$, either $G(\tau_i)^{m_i}(a_i) = a_{i+1}$ or 
	$\bar{F}^\beta(\tau_i)^{m_i}(a_i) = a_{i+1}$.

	First, assume that $\delta < \alpha$. 
	Since $G$ is equal to $F^\alpha$ on $T$ and $\bar{F}^\beta$ is equal to $F^\beta$ on $T$, 
	for each $i < n$, either $F^\alpha(\tau_i)^{m_i}(a_i) = a_{i+1}$ or 
	$F^\beta(\tau_i)^{m_i}(a_i) = a_{i+1}$. 
	Let $i < n$. 
	If $\tau_i \in A$, then $F^\alpha(\tau_i) \res T = F^\beta(\tau_i) \res T$, 
	so in either case, 
	$F^\alpha(\tau_i)^{m_i}(a_i) = a_{i+1}$. 
	Suppose that it is not the case that $F^\alpha(\tau_i)^{m_i}(a_i) = a_{i+1}$. 
	Then $F^\beta(\tau_i)^{m_i}(a_i) = a_{i+1}$. 
	By property 4f, $F^\alpha(g_{\beta,\alpha}(\tau_i))^{m_i}(a_i) = a_{i+1}$. 
	It follows that $\langle a_0,\ldots,a_{n-1} \rangle$ is a loop in $T^\alpha_\delta$ 
	with respect to $F^\alpha$, which contradicts that $(T^\alpha,F^\alpha)$ is a condition.

	Secondly, assume that $\alpha \le \delta < \beta$. 
	We consider four cases.

	Case 1: For all $i < n$, $a_i \in \mathcal{C} \cup \mathcal{S}$.
	Consider $i < n-1$. 
	Then either $\bar{F}^\beta(\tau_i)^{m_i}(a_i) = a_{i+1}$ or 
	$G(\tau_i)^{m_i}(a_i) = a_{i+1}$. 
	If either $a_i$ or $a_{i+1}$ is in $\mathcal C$, 
	then we are in the former case by Claim 9. 
	Hence, $F^\beta(\tau_i)^{m_i}(a_i^+) = a_{i+1}^+$. 
	Otherwise, both $a_i$ and $a_{i+1}$ are in $\mathcal S$. 
	By Claim 11, there exists some 
	$(m,\zeta) \in \{ -1, 1 \} \times \dom(F^\beta)$ such that 
	$F^\beta(\zeta)^m(a_i^+) = a_{i+1}^+$. 
	So $\langle a_0^+,\ldots,a_{n-1}^+ \rangle$ is a loop in $T^\beta_\beta$ 
	with respect to $F^\beta$, which contradicts 
	that $(T^\beta,F^\beta)$ is a condition.

	Case 2: For all $i < n$, $a_i \in \mathcal{D} \cup \mathcal{S}$. 
	Let $i < n-1$. 
	If either $a_i$ or $a_{i+1}$ is in $\mathcal D$, 
	then $G(\tau_i)^{m_i}(a_i) = a_{i+1}$ by Claim 10. 
	Otherwise, $a_i$ and $a_{i+1}$ are both in $\mathcal S$, 
	so by Claim 12 there exists some 
	$(m,\xi) \in \{ -1, 1 \} \times \dom(G)$ such that 
	$G(\xi)^{m}(a_i) = a_{i+1}$. 
	It follows that $\langle a_0,\ldots,a_{n-1} \rangle$ is a loop in $U_\delta$ 
	with respect to $G$, which contradicts that $(U,G)$ is a condition.
	
	Case 3: For all $i < n$, $a_i \in \mathcal{C} \cup \mathcal{D}$. 
	By Claims 9 and 10, 
	there are no relations between members of $\mathcal C$ and members 
	of $\mathcal D$ with respect to $H$. 
	So either for all $i < n$, 
	$a_i \in \mathcal C$, or for all $i < n$, $a_i \in \mathcal D$. 
	So we are in either Case 1 or Case 2, which were already handled.

	Case 4: The sequence $\langle a_0,\ldots,a_{n-1} \rangle$ contains at least one member 
	in each of $\mathcal C$, $\mathcal D$, and $\mathcal S$. 
	Since there are no relations between members of $\mathcal C$ and members 
	of $\mathcal D$ with respect to $H$, there do not exist 
	adjacent elements of the loop where one is in 
	$\mathcal C$ and the other is in $\mathcal D$. 
	By shifting the sequence if necessary, we may assume without loss of generality 
	that $a_0 \in \mathcal C$ and $a_1 \in \mathcal S$. 
	Since $n \ge 4$, we know that $a_0$, $a_1$, and $a_{n-2}$ are all distinct. 
	Let $k < n-1$ be largest such that $a_k$ is not in $\mathcal C$. 
	So every member of the sequence after $a_k$ is in $\mathcal C$, and in particular, 
	$a_{k+1} \in \mathcal C$, so $a_k \in \mathcal S$. 
	Moreover, $a_1$ and $a_k$ are different because the loop contains at least one 
	element which is in $\mathcal D$, and that member of $\mathcal D$ 
	must be between $a_1$ and $a_k$. 
	Apply Claim 5 to fix an injective sequence $\langle b_0,\ldots,b_{p-1} \rangle$ 
	of elements of $\mathcal S$ such that $p \ge 2$, $b_0 = a_1$, $b_{p-1} = a_k$, 
	and for all $j < p-1$ there exists $(m,\sigma) \in \{ -1, 1 \} \times A$ 
	such that $G(\sigma)^m(b_i) = b_{i+1}$. 

	Now consider the sequence 
	$$
	\langle a_0, a_1, b_1, \ldots, b_{p-2}, a_k, 
	a_{k+1},\ldots,a_{n-1} \rangle.
	$$
	Note that this sequence has length at least $4$, 
	all of the elements of this sequence are in $\mathcal C \cup \mathcal S$, 
	any two adjacent elements of this sequence are related with respect to $H$, 
	and this
	sequence minus the last element is injective. 
	Thus, we have a loop in $W_\delta$ with respect to $H$ consisting of 
	members of $\mathcal C \cup \mathcal S$. 
	So we are in Case 1, which was already handled.
	
	This completes the proof that $(W,H)$ is a condition. 

	It remains to prove that $(W,H)$ extends $(W,G)$ and $(W,\bar{F}^\beta)$. 
	In both cases, it suffices to verify Definition 5.1(c).

	\emph{Proving that $(W,H) \le (W,\bar{F}^\beta)$:}
	
	Suppose that $\gamma$ and $\tau$ are distinct elements of $\dom(F^\beta)$, 
	$x \in \dom(H(\gamma)) \cap \dom(H(\tau))$, and $H(\gamma)(x) = H(\tau)(x)$. 
	We prove that there exists some $z \in W$ such that $x \le_{W} z$ 
	and $\bar{F}^\beta(\gamma)(z) = \bar{F}^\beta(\tau)(z)$. 
	If $x \in W \setminus \beta$, then $H(\gamma) = F^\beta(\gamma)$ and 
	$H(\tau) = F^\beta(\tau)$, so we are done. 
	Assume that $x \in T$. 
	For each $\xi \in \{ \gamma, \tau \}$, 
	$H(\xi)(x)$ is equal to either $F^\alpha(\xi)(x)$ or $F^\beta(\xi)(x)$. 
	Moreover, by property 4f we are in both cases when $\xi \in A$. 
	However, since $\xi \in \dom(F^\beta)$, 
	if $H(\xi)(x) = F^\alpha(\xi)(x)$, then $\xi \in A$. 
	So no matter what, $H(\gamma)(x) = F^\beta(\gamma)(x)$ and 
	$H(\tau)(x) = F^\beta(\tau)(x)$, and we are done.

	Now assume that $x \in U_\delta$ for some $\alpha \le \delta < \beta$. 
	Let $y = H(\gamma)(x)$. 
	So also $y = H(\tau)(x)$. 
	Suppose that $x \in \mathcal C$. 
	Then by Claim 9, 
	$H(\gamma)(x) = \bar{F}^\beta(\gamma)(x)$ 
	and $H(\tau)(x) = \bar{F}^\beta(\gamma)(x)$. 
	Therefore, $F^\beta(\gamma)(x^+) = y^+ = F^\beta(\tau)(x^+)$ 
	and we are done. 
	If $x \in \mathcal D$, then by Claim 10, 
	$H(\gamma)(x) = G(\gamma)(x)$ and 
	$H(\tau)(x) = G(\tau)(x)$. 
	So $\gamma$ and $\tau$ are in $\dom(G) = \dom(F^\alpha)$, and hence are in $A$. 
	So $\rho(\gamma,\tau) < \alpha$.  
	Since $G$ is $\rho$-separated on $W_\delta$, 
	the equations $G(\gamma)(x) = y$ and $G(\tau)(x) = y$ imply 
	that $\rho(\gamma,\tau) \ge \delta \ge \alpha$, which is a contradiction.
	
	Finally, assume that $x \in \mathcal S$. 
	If $H(\gamma)(x) = \bar{F}^\beta(\gamma)(x)$ 
	and $H(\tau)(x) = \bar{F}^\beta(\tau)(x)$, then 
	$F^\beta(\gamma)(x^+) = y^+$ and $F^\beta(\tau)(x^+) = y^+$ and we are done. 
	Otherwise, without loss of generality, $H(\gamma)(x) = G(\gamma)(x)$ and it is not 
	the case that $H(\gamma)(x) = \bar{F}^\beta(\gamma)(x)$. 
	But $H(\gamma)(x) = G(\gamma)(x)$ implies that $\gamma \in A$, 
	and then Claim 8 implies $H(\gamma)(x) = \bar{F}^\beta(\gamma)(x)$, which is a contradiction.
	This completes the proof that $(W,H) \le (W,\bar{F}^\beta)$. 

	\emph{Proving that $(W,H) \le (W,G)$:}
	
	Suppose that $\gamma$ and $\tau$ are distinct elements of $\dom(G)$, 
	$x \in \dom(H(\gamma)) \cap \dom(H(\tau))$, and 
	$H(\gamma)(x) = H(\tau)(x)$. 
	Let $y = H(\gamma)(x)$. 
	We prove that there exists some $z \in W$ such that $x \le_{W} z$ 
	and $G(\gamma)(z) = G(\tau)(z)$. 
	If $H(\gamma)(x) = G(\gamma)(x)$ and $H(\tau)(x) = G(\tau)(x)$, then we are done. 
	So assume without loss of generality that $H(\gamma)(x) = \bar{F}^\beta(\gamma)(x)$ 
	and it is not the case that $H(\gamma)(x) = G(\gamma)(x)$. 
	It follows that $\gamma \in A$. 
	By Claim 6, we may assume that $x \notin T$. 
	Assume that $x \in W_\delta = T^\beta_\delta$ for some $\beta \le \delta$. 
	Then $H(\tau)(x) = F^\beta(\tau)(x)$ and also $\tau \in A$. 
	Since $F^\beta$ is $\rho$-separated on $T^\beta_\delta$, 
	$\rho(\gamma,\tau) \ge \delta \ge \beta$. 
	But $\gamma$ and $\tau$ are both in $A$, so $\rho(\gamma,\tau) < \alpha < \delta$, 
	which is a contradiction.

	Finally, assume that $x \in W_\delta = U_\delta$ for some $\alpha \le \delta < \beta$. 
	If $x \in \mathcal C$, then by Claim 9, 
	$H(\tau)(x) = \bar{F}^\beta(\tau)(x)$. 
	So $\gamma$ and $\tau$ are both in $A$, and hence $\rho(\gamma,\tau) < \alpha$. 
	But $\bar{F}^\beta(\gamma)(x) = y = \bar{F}^\beta(\tau)(x)$ implies that 
	$F^\beta(\gamma)(x^+) = y^+ = F^\beta(\tau)(x^+)$. 
	Since $F^\beta$ is $\rho$-separated on $T^\beta_\beta$, 
	$\rho(\gamma,\tau) \ge \beta$, which is a contradiction. 
	If $x \in \mathcal D$, then by Claim 10, 
	$H(\gamma)(x) = G(\gamma)(x)$, 
	which contradicts our assumption. 
	Finally, assume that $x \in \mathcal S$. 
	By Claim 8, $H(\gamma)(x) = \bar{F}^\beta(\gamma)(x) = G(\gamma)(x)$, 
	which again contradicts our assumption.
	\end{proof}

\begin{corollary}
	The forcing poset $\p$ is Knaster.
\end{corollary}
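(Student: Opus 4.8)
The plan is to reduce everything to Theorem 7.1. Recall that a forcing poset is \emph{Knaster} if every uncountable subset of it contains an uncountable subset consisting of pairwise compatible conditions. So let $\mathcal{A}$ be an uncountable subset of $\p$, and fix an injective enumeration $\langle (S^\alpha,E^\alpha) : \alpha < \omega_1 \rangle$ of $\omega_1$-many members of $\mathcal{A}$. It suffices to find an uncountable set $Z \subseteq \omega_1$ such that for all $\alpha$ and $\beta$ in $Z$ the conditions $(S^\alpha,E^\alpha)$ and $(S^\beta,E^\beta)$ are compatible; then $\{ (S^\alpha,E^\alpha) : \alpha \in Z \}$ is the desired subset of $\mathcal{A}$.

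First I would perform a preliminary extension to put the conditions into the form required by Theorem 7.1. For each $\alpha < \omega_1$, apply Lemma 5.4 with the finite set $\{ \alpha \}$ to fix a condition $(T^\alpha,F^\alpha) \le (S^\alpha,E^\alpha)$ with $\alpha \in \h[T^\alpha]$. Since $\alpha \in \h[T^\alpha]$, the level $T^\alpha_\alpha$ is non-empty, so we may choose $x^\alpha \in T^\alpha_\alpha$. By the definition of the height function, $\omega \cdot \alpha \le x^\alpha$, and since $\alpha \le \omega \cdot \alpha$ for every ordinal $\alpha$, we conclude that $x^\alpha \in T^\alpha \setminus \alpha$. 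Thus $\langle (T^\alpha,F^\alpha) : \alpha < \omega_1 \rangle$ is a sequence of conditions in $\p$ together with elements $x^\alpha \in T^\alpha \setminus \alpha$, exactly as in the hypothesis of Theorem 7.1.

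Next I would apply Theorem 7.1 to obtain an uncountable set $Z$ such that for all $\alpha < \beta$ in $Z$ there exists a condition $(W,H)$ extending both $(T^\alpha,F^\alpha)$ and $(T^\beta,F^\beta)$ (the additional conclusion that $x^\alpha <_W x^\beta$ plays no role here). Since $(T^\alpha,F^\alpha) \le (S^\alpha,E^\alpha)$ and $(T^\beta,F^\beta) \le (S^\beta,E^\beta)$, and the ordering of $\p$ is transitive, $(W,H)$ also extends $(S^\alpha,E^\alpha)$ and $(S^\beta,E^\beta)$. Hence $(S^\alpha,E^\alpha)$ and $(S^\beta,E^\beta)$ are compatible, so $\{ (S^\alpha,E^\alpha) : \alpha \in Z \}$ is an uncountable pairwise compatible subset of $\mathcal{A}$, and therefore $\p$ is Knaster.

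There is essentially no obstacle to overcome in this argument: all of the real work has already been done in the proof of Theorem 7.1, and the only point needing care is the cosmetic one of using Lemma 5.4 to arrange that each condition has an element lying above the ordinal $\alpha$, so that Theorem 7.1 can legitimately be invoked.
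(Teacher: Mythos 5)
Your proof is correct and follows essentially the same route as the paper: use Lemma 5.4 to arrange $\alpha \in \h[T^\alpha]$, pick $x^\alpha \in T^\alpha_\alpha$, and apply Theorem 7.1. You spell out a couple of details the paper leaves implicit (that $x^\alpha \ge \omega \cdot \alpha \ge \alpha$ so $x^\alpha \in T^\alpha \setminus \alpha$, and the bookkeeping behind the ``without loss of generality'' step of passing to extensions), but the substance is identical.
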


\begin{proof}
	Let $\{ (T^\alpha,F^\alpha) : \alpha < \omega_1 \}$ be a family of conditions. 
	Applying Lemma 5.4, without loss of generality we may assume that for all $\alpha < \omega_1$, 
	$\alpha \in \h[T^\alpha]$. 
	For each $\alpha < \omega_1$, fix some $x^\alpha \in T^\alpha_\alpha$. 
	Now apply Theorem 7.1 to find an uncountable set $Z \subseteq \omega_1$ 
	such that for all $\alpha < \beta$ in $Z$, 
	$(T^\alpha,F^\alpha)$ and $(T^\beta,F^\beta)$ are compatible.
\end{proof}

\begin{corollary}
	The forcing poset $\p$ forces that $T^{{\dot{G}_{\p}}}$ is Suslin.
\end{corollary}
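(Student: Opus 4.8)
The plan is to deduce this from Theorem 7.1, in the same spirit as Corollary 7.2. By Corollary 7.2 the poset $\p$ is Knaster, hence c.c.c., hence preserves $\omega_1$, so by Corollary 5.11 it forces that $T^{\dot G}$ is a normal $\omega$-ary $\omega_1$-tree. Since a normal $\omega$-ary $\omega_1$-tree with no uncountable antichain automatically has no uncountable branch — given an uncountable, hence cofinal, branch $b = \{ b_\gamma : \gamma < \omega_1 \}$ with $b_\gamma$ the node of $b$ at level $\gamma$, use $\omega$-ariness to pick an immediate successor $x_\gamma$ of $b_\gamma$ with $x_\gamma \ne b_{\gamma+1}$, and check that $\{ x_\gamma : \gamma < \omega_1 \}$ is an uncountable antichain — it will suffice to show that $\p$ forces that $T^{\dot G}$ has no uncountable antichain.

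I would argue by contradiction: suppose $p_0 \in \p$ forces that $\dot A$ is an uncountable antichain of $T^{\dot G}$. Since $\dot A$ is forced to be an uncountable set of countable ordinals, it is forced to be cofinal in $\omega_1$, so for each $\alpha < \omega_1$ the set of $(T,F) \le p_0$ for which there is some $x \in T$ with $x \ge \alpha$ and $(T,F) \Vdash x \in \dot A$ is dense below $p_0$: one first decides an ordinal $\delta \ge \alpha$ with $(T,F) \Vdash \delta \in \dot A$, and then extends to put $\delta$ into the tree of the condition (possible because $\delta$ is forced to lie in $T^{\dot G}$, and $T^{\dot G}$ is by definition the union of the trees of the conditions in the generic filter). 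Fix, for each $\alpha < \omega_1$, such a condition $(T^\alpha, F^\alpha) \le p_0$ and such an element $x^\alpha \in T^\alpha$ with $x^\alpha \ge \alpha$; thus $x^\alpha \in T^\alpha \setminus \alpha$ and $(T^\alpha, F^\alpha) \Vdash x^\alpha \in \dot A$.

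Now I would apply Theorem 7.1 to the sequence $\langle (T^\alpha, F^\alpha) : \alpha < \omega_1 \rangle$ together with the chosen points $x^\alpha$. This yields an uncountable set $Z$ such that for all $\alpha < \beta$ in $Z$ there is a condition $(W,H)$ extending both $(T^\alpha, F^\alpha)$ and $(T^\beta, F^\beta)$ with $x^\alpha <_W x^\beta$. Fixing any such $\alpha < \beta$ in $Z$ and such a $(W,H)$, we have $(W,H) \le (T^\alpha, F^\alpha)$ and $(W,H) \le (T^\beta, F^\beta)$, so $(W,H)$ forces $x^\alpha \in \dot A$ and $x^\beta \in \dot A$; and since $W$ is an initial segment of $T^{\dot G}$ below $(W,H)$ and $x^\alpha <_W x^\beta$, it forces $x^\alpha <_{T^{\dot G}} x^\beta$. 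Hence $(W,H)$ forces that $\dot A$ contains two distinct comparable elements, contradicting that $\dot A$ is forced to be an antichain. This contradiction shows that $\p$ forces $T^{\dot G}$ to have no uncountable antichain, which completes the proof.

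All the genuine content is absorbed into Theorem 7.1 and Corollary 7.2, which are already established; the only points here needing (routine) care are the density argument producing the conditions $(T^\alpha, F^\alpha)$ together with the points $x^\alpha \in T^\alpha \setminus \alpha$ forcing $x^\alpha \in \dot A$, and the standard remark that for a normal $\omega$-ary $\omega_1$-tree ``no uncountable antichain'' already upgrades to ``Suslin''. I do not anticipate any real obstacle.
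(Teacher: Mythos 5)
Your proof is correct and follows essentially the same route as the paper: both argue by contradiction from a condition forcing an uncountable antichain $\dot A$, produce a sequence of conditions $(T^\alpha,F^\alpha) \le p$ together with points $x^\alpha \in T^\alpha \setminus \alpha$ that each condition forces into $\dot A$, and then apply Theorem 7.1 to obtain a common extension $(W,H)$ forcing $x^\alpha <_W x^\beta$ for some pair, contradicting antichain-hood. The only cosmetic differences are that the paper routes through $\p$-names $\dot x^\alpha$ before reducing to the decided ground-model elements, whereas you go directly via a density argument, and that you make explicit the standard remark (left implicit in the paper) that for a normal $\omega$-ary $\omega_1$-tree, absence of uncountable antichains already implies absence of uncountable branches.
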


\begin{proof}
	Suppose for a contradiction that some condition $p \in \p$ forces that there exists 
	an uncountable antichain of $T^{{\dot{G}_{\p}}}$. 
	Then we can find a sequence of $\p$-names 
	$\langle \dot x^\alpha : \alpha < \omega_1 \rangle$ for elements of $T^{{\dot{G}_{\p}}}$ 
	such that $p$ forces that 
	for each $\alpha < \omega_1$, $\h(\dot x^\alpha) \ge \alpha$, and 
	for all $\alpha < \beta < \omega_1$, $\dot x^\alpha$ and $\dot x^\beta$ 
	are incomparable.  
	For each $\alpha < \omega_1$, pick a condition $(T^\alpha,F^\alpha) \le p$ 
	and some $x^\alpha \in T^\alpha$ such that 
	$(T^\alpha,F^\alpha)$ forces that $\dot x^\alpha$ is 
	equal to $\check x^\alpha$. 
	By Theorem 7.1, there are $\alpha < \beta < \omega_1$ and a condition 
	$(W,H)$ extending $(T^\alpha,F^\alpha)$ and $(T^\beta,F^\beta)$ such that 
	$x^\alpha <_W x^\beta$. 
	But this contradicts that $(W,H)$ forces that $\dot x^\alpha$ and $\dot x^\beta$ 
	are incomparable.
\end{proof}

We have now completed the proof of the main theorem.

\bigskip

We close the article with a question.

\begin{question}
	Is it consistent that there exists a strongly non-saturated Aronszajn tree 
	and there does not exist a weak Kurepa tree? 
\end{question}

We remark that we do not know whether the forcing $\p$ itself always adds a Kurepa 
tree or a weak Kurepa tree.

\bigskip

\textbf{Acknowledgements:} 
The first author acknowledges support from the Simons Foundation under 
the Travel Support for Mathematicians gift 631279. 
We thank the referee for providing helpful suggestions.

\providecommand{\bysame}{\leavevmode\hbox to3em{\hrulefill}\thinspace}
\providecommand{\MR}{\relax\ifhmode\unskip\space\fi MR }
\providecommand{\MRhref}[2]{%
  \href{http://www.ams.org/mathscinet-getitem?mr=#1}{#2}
}
\providecommand{\href}[2]{#2}


\begin{thebibliography}{KLMV08}

\bibitem[Bau85]{baumgartnerbase}
J.~E. Baumgartner, \emph{Bases for {A}ronszajn trees}, Tsukuba J. Math.
  \textbf{9} (1985), no.~1, 31--40.

\bibitem[Dil50]{dilworth}
R.~P. Dilworth.
\newblock A decomposition theorem for partially ordered sets.
\newblock {\em Ann. of Math. (2)}, 51:161--166, 1950.

\bibitem[Jec67]{jech67}
T.~Jech, \emph{Nonprovability of {S}ouslin's hypothesis}, Comment. Math. Univ.
  Carolinae \textbf{8} (1967), 291--305.

\bibitem[Jec72]{jech72}
\bysame, \emph{Automorphisms of $\omega_1$-trees}, Trans. Amer. Math. Soc.
  \textbf{173} (1972), 57--70.

\bibitem[JS90]{jensenschlechta}
R.~Jensen and K.~Schlechta, \emph{Results on the generic {K}urepa hypothesis},
  Arch. Math. Logic \textbf{30} (1990), 13--27.

\bibitem[KLMV08]{moorebounding}
B.~K\"{o}nig, P.~Larson, J.~T. Moore, and B.~Veli\v{c}kovi\'{c}, \emph{Bounding
  the consistency strength of a five element linear basis}, Israel J. Math.
  \textbf{164} (2008), 1--18.

\bibitem[KS]{KS}
J.~Krueger and \v{S}.~Stejskalov\'{a}, \emph{Forcing over a free {S}uslin tree},
  Preprint.

\bibitem[Ste66]{stewart}
D.~H. Stewart, \emph{The consistency of the {K}urepa hypothesis with the axioms
  of set theory}, Master's thesis, Bristol, 1966.

\bibitem[Ten68]{tennenbaum}
S.~Tennenbaum, \emph{{S}ouslin's problem}, Proc. Nat. Acad. Sci. USA
  \textbf{59} (1968), 60--63.

\bibitem[Tod87]{todorpartition}
S.~Todor\v{c}evi\'{c}, \emph{Partitioning pairs of countable ordinals}, Acta
  Math. \textbf{159} (1987), no.~3-4, 261--294.

\bibitem[Tod07]{todorbook}
\bysame, \emph{Walks on ordinals and their characteristics}, Progress in
  Mathematics, vol. 263, Birkh\"{a}user Verlag AG, Basel, 2007.

\bibitem[Vel92]{boban}
B.~Veli\v{c}kovi\'{c}, \emph{Forcing axioms and stationary sets}, Adv. Math.
  \textbf{94} (1992), no.~2, 256--284.

\end{thebibliography}

\end{document}